\documentclass[a4paper, 10pt]{article}

\usepackage{amsthm}
\usepackage{amsmath}
\usepackage{amssymb}

\usepackage{todonotes}
\usepackage[shortlabels]{enumitem}
\usepackage{url}
\usepackage[bbgreekl]{mathbbol}
\usepackage{xcolor}

\newcommand{\leszek}[1]{#1}

\newtheorem{theorem}{Theorem}[section]
\newtheorem{lemma}[theorem]{Lemma}
\newtheorem{corollary}[theorem]{Corollary}
\newtheorem{proposition}[theorem]{Proposition}

\newtheorem*{mainthm}{Theorem \ref{thm:wkl-iso} (abbreviated version)}

\theoremstyle{definition}
\newtheorem{definition}[theorem]{Definition}

\theoremstyle{remark}
\newtheorem*{remark}{Remark}

\newtheorem{question}[theorem]{Question}

\let\le\leqslant
\let\ge\geqslant
\newcommand{\defeq}{\mathrel{\mathop:}=}

\newcommand{\N}{\mathbb{N}}

\newcommand{\X}{\mathcal{X}}
\newcommand{\Y}{\mathcal{Y}}

\newcommand{\W}{\mathcal{W}}

\let\extset\widetilde

\newcommand{\eb}[2]{\exists #1 \! \le \! #2 \,}
\newcommand{\ab}[2]{\forall #1 \! \le \! #2 \,}

\newcommand{\ee}{\mathrm{e}}

\newcommand{\RCA}{\mathrm{RCA}}
\newcommand{\ACA}{\mathrm{ACA}}
\newcommand{\WKL}{\mathrm{WKL}}

\newcommand{\RT}{\mathrm{RT}}
\newcommand{\COH}{\mathrm{COH}}
\newcommand{\CAC}{\mathrm{CAC}}
\newcommand{\ADS}{\mathrm{ADS}}

\newcommand{\ind}{\mathrm{I}}
\newcommand{\bd}{\mathrm{B}}
\newcommand{\PA}{\mathrm{PA}}

\newcommand{\tuple}[1]{\langle#1\rangle}

\newcommand{\gn}[1]{\ulcorner#1\urcorner}

\newcommand{\Sat}{\mathrm{Sat}}
\newcommand{\Ack}{\mathrm{Ack}}

\newcommand{\rs}{\mathrm{r}\Sigma^1_1}
\newcommand{\rp}{\mathrm{r}\Pi^1_2}

\newcommand{\ignore}[1]{}

\definecolor{colormfc}{HTML}{BFE3EC}


\title{An isomorphism theorem \\ for models of Weak K\"onig's Lemma \\ without primitive recursion}
\author{
Marta Fiori-Carones\thanks{Institute of Mathematics, University of Warsaw,
\texttt{marta.fioricarones@outlook.it}, \texttt{lak@mimuw.edu.pl}.}
\and Leszek Aleksander Ko{\l}odziejczyk\footnotemark[1]
\and Tin Lok Wong\thanks{Department of Mathematics, National University of Singapore, \texttt{matwong@nus.edu.sg}. }
\and Keita Yokoyama\thanks{Mathematical Institute, Tohoku University, \texttt{keita.yokoyama.c2@tohoku.ac.jp}.}
}

\begin{document}

\maketitle

\begin{abstract}
We prove that if $(M,\X)$ and $(M,\Y)$ are countable models of the theory $\WKL^*_0$
such that $\ind \Sigma_1(A)$ fails for some $A \in \X \cap \Y$, then
$(M,\X)$ and $(M,\Y)$ are isomorphic. As a consequence, the analytic hierarchy
collapses to $\Delta^1_1$ provably in $\WKL^*_0 + \neg \ind \Sigma^0_1$,
and $\WKL$ is the strongest $\Pi^1_2$ statement that
is $\Pi^1_1$-conservative over $\RCA^*_0 + \neg \ind \Sigma^0_1$.

Applying our results to the $\Delta^0_n$-definable sets
in models of $\RCA^*_0 + \bd \Sigma^0_n + \neg \ind \Sigma^0_n$
that also satisfy an appropriate relativization of Weak K\"onig's Lemma,
we prove that for each $n \ge 1$, the set of $\Pi^1_2$ sentences
that are $\Pi^1_1$-conservative over $\RCA^*_0 + \bd \Sigma^0_n + \neg \ind \Sigma^0_n$
is c.e.
In contrast, we prove that the set of $\Pi^1_2$ sentences
that are $\Pi^1_1$-conservative over $\RCA^*_0 + \bd \Sigma^0_n$
is $\Pi_2$-complete. This answers a question of Towsner.

We also show that $\RCA_0 + \RT^2_2$ is $\Pi^1_1$-conservative over $\bd \Sigma^0_2$
if and only if it is conservative over $\bd \Sigma^0_2$ with respect to $\forall \Pi^0_5$ sentences.
\end{abstract}

\leszek{
In this paper, we investigate a new model-theoretic argument that can be used to study some fragments of second-order arithmetic. It is a clich\'e that classical model-theoretic techniques are of limited use in understanding models of arithmetic, and in particular it is well-known that commonly studied theories of arithmetic do not have nice model-theoretic properties such as model completeness and quantifier elimination. 
Nevertheless, we use an automorphism-based argument to obtain a kind of model completeness result for a specific fragment of second-order arithmetic with (partially) negated induction.
This result can be applied to obtain new information both about general properties of some theories of second-order arithmetic, including true ones, and about specific principles considered in reverse mathematics.
}

The main theorem of \leszek{the paper} concerns \leszek{the} axiomatic theory known as $\WKL^*_0$.
This is a fragment of second-order arithmetic consisting of the theory $\RCA^*_0$ --
that is, $\Delta^0_1$-comprehension, $\Delta^0_1$-induction, and the totality of exponentiation --
and the additional axiom $\WKL$ \leszek{expressing a compactness principle called} Weak K\"onig's Lemma,
which says that any infinite 0--1 tree has an infinite path. Compared to the usual
base theory considered in reverse mathematics, $\RCA_0$, the system $\WKL^*_0$
is proof-theoretically much weaker, as a result of not requiring induction for $\Sigma^0_1$ properties.
On the other hand, $\WKL^*_0$ goes beyond $\RCA_0$
in that $\WKL$ implies the existence of noncomputable sets.

It is known that in some important respects $\WKL^*_0$ behaves similarly
to its stronger cousin $\WKL_0 := \RCA_0 + \WKL$.
For example, it was shown in the seminal paper \cite{simpson-smith}
that $\WKL^*_0$ is $\Pi^1_1$-conservative over $\RCA^*_0$.
This is analogous, and can be proved analogously, to the well-known theorem
that $\WKL$ is $\Pi^1_1$-conservative over $\RCA_0$.
A more recent observation \cite{enayat-wong} is that $\WKL^*_0$, like $\WKL_0$,
implies the completeness theorem for first-order logic; moreover,
it also implies some watered down versions of completeness tailored to cut-free consistency,

which are often helpful as $\WKL^*_0$ does not imply cut elimination for first-order logic.
This makes it possible to discover numerous connections between the model theory of $\WKL^*_0$
and that of first-order arithmetic.

Here, we prove a result that applies specifically to models of $\WKL^*_0$ without $\Sigma^0_1$-induction
and has no apparent analogue for $\WKL_0$. It can be seen as an extension to the second-order setting of some earlier results to the effect that countable models satisfying $\Sigma_n$-collection (and $\exp$) but not $\Sigma_n$-induction
have (in fact, many) nontrivial automorphisms.

\begin{mainthm}
Let $(M, \X)$ and $(M,\Y)$ be countable models of $\WKL^*_0$
such that $(M,\X \cap \Y) \models \neg \ind \Sigma^0_1$.
Then $(M, \X)$ and $(M,\Y)$ are isomorphic.
\end{mainthm}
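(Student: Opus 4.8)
The plan is to build the isomorphism by a back-and-forth construction, where at each stage we have a finite partial isomorphism between the two models and we need to extend it to cover one more element of $\X$ (and symmetrically one more element of $\Y$). Since the first-order part $M$ is shared, the identity is a natural starting point on the first-order side, and the real content is matching up the second-order parts. The key structural fact we can exploit is that $\WKL^*_0$ proves the completeness theorem (and cut-free versions thereof), so that questions about which sets can be adjoined to a model reduce to consistency statements that $M$ can reason about internally.

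First I would set up the right notion of "approximation": a finite condition should specify, for finitely many sets $A_1,\dots,A_k \in \X$ already handled, a tuple $B_1,\dots,B_k \in \Y$ together with enough bookkeeping to guarantee the map $A_i \mapsto B_i$ is elementary (or at least preserves the relevant first-order-with-parameters formulas) — in practice this means controlling, for each bounded initial segment, the finite amount of information $M$ can "see" about the $A_i$. The crucial point is that because $\ind\Sigma^0_1$ fails in $(M,\X\cap\Y)$, there is a cut $I \subseteq M$ that is closed under successor but is $\Sigma^0_1$-definable from some parameter $A \in \X \cap \Y$; equivalently there is an unbounded, non-$M$-finite ascending sequence coded below some element. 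This gives us the room to "shift" sets: the non-standardness combined with the failure of induction lets us find, inside $M$ (using $\WKL$ to extract paths through trees built by the completeness theorem), a set $B$ realizing any prescribed finite type over the previously chosen parameters, because the relevant tree of approximations is infinite exactly when the corresponding consistency statement holds, and that consistency can be verified using only $\Delta^0_1$-induction plus the failure of $\Sigma^0_1$-induction as a source of "extra" coded objects.

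The heart of the argument — and the step I expect to be the main obstacle — is the extension lemma: given a finite partial isomorphism and a new set $A \in \X$, produce $B \in \Y$ extending the map. Here one wants to express "$B$ behaves over $\vec{B}$ exactly as $A$ behaves over $\vec{A}$" as a path through an infinite $0$–$1$ tree definable in $(M,\Y)$; infinity of the tree should follow from a consistency/compactness argument carried out inside $M$, using that $M\models\WKL^*_0$ knows its own first-order theory is consistent with the diagram of the $\vec B$'s plus the (standard-coded, by failure of induction) partial information about $A$. The delicate issues are: (i) ensuring the tree is genuinely in $\Y$ and not just externally definable, which is where one leverages that $\X$ and $\Y$ both satisfy $\WKL$ over the \emph{same} $M$; (ii) making sure the "amount of information about $A$" that needs to be matched is coded by an element of $M$ below the relevant cut — this is exactly what $\neg\ind\Sigma^0_1$ buys us, since in a model with full $\Sigma^0_1$-induction no such proper coded cut exists and the argument would break (consistent with the theorem having no analogue for $\WKL_0$); and (iii) the bookkeeping that keeps the union of all the finite conditions a total bijection $\X \leftrightarrow \Y$ on the second-order universes while fixing $M$ pointwise.

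Finally I would assemble these into the back-and-forth: enumerate $\X = \{A_i\}_{i\in\omega}$ and $\Y = \{B_i\}_{i\in\omega}$ (using countability), alternately apply the extension lemma in each direction, and take the union; the resulting map is the identity on $M$, a bijection on the set universes, and preserves membership and all the arithmetical structure by construction, hence is an isomorphism $(M,\X) \cong (M,\Y)$. The only genuinely new ingredient beyond standard back-and-forth is the extension lemma above; everything else is routine, so the write-up should isolate that lemma, prove it carefully using the internal completeness theorem available in $\WKL^*_0$ together with the coded cut provided by $\neg\ind\Sigma^0_1$, and then state the back-and-forth as a short corollary.
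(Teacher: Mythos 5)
There is a genuine gap --- in fact two. First, your assembled map cannot be ``the identity on $M$'': if $h$ is an isomorphism of $(M,\X)$ onto $(M,\Y)$ fixing $M$ pointwise, then for every $X \in \X$ and $m \in M$ we have $m \in X$ iff $m \in h(X)$, so $h(X) = X$ and hence $\X = \Y$. The theorem is only interesting when $\X \neq \Y$, so the isomorphism must move first-order elements; the back-and-forth therefore needs first-order extension steps as well (in the paper these are handled by coding the relevant bounded $\Delta_0$-type of the new element into a single $j^* \in M$ via the Chong--Mourad lemma and then using $\bd\Sigma^0_1$ in the target model to find one witness working along the whole cut), and only a designated finite tuple $\overline c$, not all of $M$, gets fixed.

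Second, the extension lemma --- which you correctly identify as the heart of the matter --- is not actually supplied. You propose to maintain that each finite condition is ``elementary (or at least preserves the relevant first-order-with-parameters formulas)'', but full elementarity is not an invariant you can impose and extend directly (in the paper elementary equivalence of $(M,\X)$ and $(M,\Y)$ with parameters is a \emph{corollary} of the isomorphism theorem, not an ingredient). What makes the construction go through is a quantitative invariant: fixing a cofinal set $A=\{a_i : i\in I\}$ enumerated along an exp-closed $\Sigma^0_1$-cut $I$, one demands that $(M,\X)$ and $(M,\Y)$ agree on all $\Delta_0$ formulas with G\"odel number below some nonstandard $\epsilon$ and with parameters $a_i$ ($i \in I$) and numbers below some $b > I$; at each step $b$ and $\epsilon$ shrink logarithmically but stay above $I$ and $\omega$. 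The bounded $\Delta_0$-type of the new element over these parameters is coded by one element $j^*$ (Chong--Mourad), the assertion that this coded type is realized up to level $i$ is a single nonstandard $\Delta_0$ formula of G\"odel number $<\epsilon$ with parameters $<b$, and the invariant transfers its truth to the other model, where $\bd\Sigma^0_1$ (first-order step) or $\WKL$ applied to the tree of finite approximations (second-order step) produces the required $s^*$ or $S^*$. Your alternative route via the internal completeness theorem does not substitute for this: mere consistency, provable inside $M$, of a theory describing how $B$ should behave over $\vec B$ does not show that the tree of finite approximations \emph{with the actual parameters} $\vec B$ and $a_i$ is infinite in $(M,\Y)$ --- that requires the truth in $(M,\Y)$ of concrete existential statements, which is exactly what the transferred invariant provides --- and your proposal contains no mechanism (the role played by Chong--Mourad coding and the shrinking nonstandard bounds) for packaging ``the finite amount of information $M$ can see about $A$'' into an element of $M$ that both models can talk about.
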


One possible interpretation of the theorem is that the only new sets
that can be added to a model of $\RCA^*_0 + \neg \ind \Sigma^0_1$
are paths through binary trees. To see this, note that if $(M, \X)$ is a countable
model of $\RCA^*_0 + \neg \ind \Sigma^0_1$, then by
\cite{simpson-smith} it can be $\bbomega$-extended
(that is, extended without changing the first-order universe $M$)
to a model $(M, \Y)$ of $\WKL^*_0$. If $G \subseteq M$ is an arbitrary
set contained in any other $\bbomega$-extension of $(M, \X)$ satisfying $\RCA^*_0$,
then even though $\Y$ might not contain $G$ itself,
by Theorem 2.1 it will contain a set $H$ such that $(M, G)$ and $(M, H)$ are isomorphic.

\leszek{
In its more general form, Theorem \ref{thm:wkl-iso} allows the isomorphism
to fix a given finite tuple of first- and second-order elements.
This has a number of consequences
both for $\WKL^*_0$ and for other theories.
}

\leszek{
For example, it follows from Theorem \ref{thm:wkl-iso} that provably in $\WKL^*_0 + \neg \ind \Sigma^0_1$ the analytic hierarchy collapses down to $\Delta^1_1$, and even to a slightly more restricted class.
If one views arithmetical formulas (i.e., those without second-order quantifiers) as quantifier-free,
this means that $\WKL^*_0 + \neg \ind \Sigma^0_1$ is a model complete theory, 
and it is the model companion of $\RCA^*_0 + \neg \ind \Sigma^0_1$.
As a result, $\WKL$ is the strongest $\Pi^1_2$ statement that is $\Pi^1_1$-conservative
over $\RCA^*_0 + \neg \ind \Sigma^0_1$.
The model completeness phenomenon is quite unexpected, as it 
does not typically occur in any recognizable form among fragments of first- or second-order arithmetic.
}



When $n \ge 1$ is arbitrary, and $(M,\X)$ is a countable model of $\RCA^*_0$ satisfying the $\Sigma^0_n$-collection scheme $\bd\Sigma^0_n$ and a relativization of $\WKL$ to $\Delta^0_n$-definable sets but not satisfying $\ind\Sigma^0_n$,
then the family of $\Delta^0_n$-definable sets of $(M,\X)$ forms a model of $\WKL^*_0 + \neg \ind \Sigma^0_1$.
Thus, Theorem \ref{thm:wkl-iso} applies to this family, implying that the variety of $\bbomega$-extensions
of models of $\RCA^*_0 + \bd \Sigma^0_n + \neg \ind \Sigma^0_n$ is also somewhat limited,
though in general not quite as drastically as for $n=1$.

A corollary of this is related to a question of Towsner \cite{towsner:maximum-conservative}.
Towsner proved that the set of $\Pi^1_2$ sentences that are $\Pi^1_1$-conservative over $\RCA^*_0 + \ind \Sigma^0_n$
is $\Pi_2$-complete for each $n \ge 1$, and he asked whether this still holds true if $\ind \Sigma^0_n$ is replaced by
$\bd \Sigma^0_n$. We show that this is, rather surprisingly, not the case if additionally one explicitly
negates $\ind \Sigma^0_n$: for $n \ge 1$, the set of $\Pi^1_2$ sentences that are $\Pi^1_1$-conservative
over $\RCA^*_0 + \bd \Sigma^0_n + \neg \ind \Sigma^0_n$ is computably enumerable.
On the other hand, we show that Towsner's question as originally stated has a positive answer.
The argument for this does not rely on Theorem \ref{thm:wkl-iso}, although one of its ingredients is a result
(Theorem \ref{thm:neg-is1-cs2}) whose known proofs are also based on automorphisms of models of arithmetic.

One further consequence of Theorem \ref{thm:wkl-iso} is connected to Ramsey's theorem for pairs and two colours,
$\RT^2_2$, and more precisely to the problem whether $\bd\Sigma^0_2$ axiomatizes the $\Pi^1_1$ consequences of $\RCA_0 + \RT^2_2$. We prove that this is the case if and only if $\bd \Sigma^0_2$ proves all the $\forall \Pi^0_5$ consequences
of $\RCA_0 + \RT^2_2$ (note that this is known for $\forall \Pi^0_3$ consequences
and open from $\forall \Pi^0_4$ onwards); moreover, if this is the case then it can be proved using the ``single jump control'' method of \cite{cholak-jockusch-slaman}. We prove these facts by applying
Theorem~\ref{thm:wkl-iso} to the $\Delta^0_2$-definable sets in models of $\RCA_0 + \bd \Sigma^0_2 + \neg \ind \Sigma^0_2$.
A different proof is possible using the fact that $\RT^2_2$ is a so-called restricted $\Pi^1_2$ formula.

The remainder of this paper is structured as follows. We present the necessary definitions and background
in the preliminary Section \ref{sec:prelim}. Section \ref{sec:iso} contains the proof of our isomorphism theorem.
In Sections \ref{sec:wklstar} and \ref{sec:generalization}, we discuss the consequences of the theorem for $\WKL^*_0$
and for higher levels of the arithmetic hierarchy, respectively.
Section \ref{sec:rt22} concerns implications of the theorem for $\RT^2_2$,
as well as a more general discussion of the behaviour of restricted $\Sigma^1_1$ formulas
under negated $\Sigma^0_1$-induction.
Finally, in Section \ref{sec:towsner} we present our positive solution to Towsner's problem.

\section{Preliminaries} \label{sec:prelim}








We assume that the reader has some familiarity with fragments of second-order arithmetic
and with models of first- and second-order arithmetic (see \cite{simpson:sosoa} or \cite{hirschfeldt:slicing} for second-order arithmetic and \cite{Kaye91} for models of first-order arithmetic).

We write $\Delta^0_n$, $\Sigma^0_n$, $\Pi^0_n$ to denote the usual formula classes defined
in terms of first-order quantifier alternations, but allowing second-order free variables.
On the other hand, notation without the superscript $0$, like $\Delta_n$, $\Sigma_n$, $\Pi_n$,
represents analogously defined classes of purely first-order, or ``lightface'', formulas,
that do not contain any second-order variables at all. If we want to specify the second-order parameters appearing
in a $\Sigma^0_n$ formula, we use notation like $\Sigma_n(A)$.
We extend these conventions to naming theories.

If $\Gamma$ is a class of formulas, then the class $\forall \exists \Gamma$
contains formulas that consist of a block of universal (first- and/or second-order) quantifiers,
followed by a block of existential quantifiers, followed by a formula from $\Gamma$.
The class $\forall \Gamma$ is defined analogously.
For example, $\forall\Sigma^0_n$ and $\forall\Pi^0_{n+1}$ are the same class of $\Pi^1_1$ formulas.

The theory $\RCA^*_0$, originally defined in
\cite{simpson-smith}, is obtained from $\RCA_0$ by weakening the $\Sigma^0_1$-induction axiom $\ind \Sigma^0_1$
to $\Delta^0_1$-induction and adding a $\Pi_2$ axiom $\exp$ that explicitly guarantees the totality of exponentiation. The theory $\WKL^*_0$ is obtained from $\WKL_0$ in an analogous way;
put differently, $\WKL^*_0$ is $\RCA^*_0$ plus Weak K\"onig's Lemma $\WKL$.
Already $\RCA^*_0$ proves the collection scheme $\bd\Sigma^0_1$, and the first-order consequences of $\RCA^*_0$ and of
$\WKL^*_0$ are axiomatized by $\bd\Sigma_1 + \exp$.

Already $\ind\Delta^{0}_{0} + \exp$ is strong enough
to support a well-behaved universal $\Sigma_n(Y)$ formula $\mathrm{Sat}_n(x,y,Y)$
(``the $\Sigma^0_n$ formula (with G\"odel number) $x$ with one first- and one second-order variable
holds of the number $y$ and the set $Y$''), for each standard natural number $n$.
As a consequence, we have access to these universal formulas
in any theory containing $\RCA^*_0$.

When we consider a model $(M,\X)$ of some fragment of second-order arithmetic
(or simply work inside this fragment without reference to a specific model),
the word \emph{set} without any qualifier refers to an element of the second-order universe $\X$.
In contrast, a \emph{$\Sigma^0_n$-definable set} (or \emph{$\Sigma^0_n$-set} for brevity)
is any subset of the first-order universe $M$ that is definable in $(M,\X)$ by $\Sigma^0_n$ formula.
A \emph{$\Delta^0_n$-definable set} or \emph{$\Delta^0_n$-set} is a $\Sigma^0_n$-set that is simultaneuosly definable by a $\Pi^0_n$ formula. The notions of a $\Sigma_n$- and $\Delta_n$-set are defined in the obvious way.

In general, the models we study only satisfy $\Delta^0_1$-comprehension, so
$\Delta^0_n$-sets for $n \ge 2$ and $\Sigma^0_n$-sets for $n \ge 1$ will not always be sets.
However, thanks to the availability of universal formulas,
we can quantify over $\Delta^0_n$- or over $\Sigma^0_n$-sets using second-order quantifiers
(e.g.~``for every $Y$ and every equivalent pair of a $\Sigma_n(Y)$ and a $\Pi_n(Y)$ formula...'').
Whenever we present reasoning or statements that are
to be understood as formalized in second-order arithmetic,
we indicate quantification over definable sets that might not be sets by putting a tilde
over the quantified variable, as in $\extset X$. So, for example,
``for every $\Sigma^0_1$-set $\extset X$ there exists $X$ such that
$\forall k\, (k \in \extset X \leftrightarrow k \in X)$'' would be a slightly unusual
way of expressing the $\Sigma^0_1$-comprehension axiom of $\ACA_0$.
Sometimes, in the context of a model-theoretic argument, we may
also use a tilde to warn the reader that a given subset of $M$ might not
belong to the second-order universe at hand.
However, in those situations we treat adding the tilde as a matter of convenience,
and we do not strive for consistency (partly because it would be unattainable:
often we work with several models at the same time, and what is a set in one model
may not be a set in another).

We write $\Delta^0_n\text{-}\mathrm{Def}(M,\X)$ for the collection of all the $\Delta^0_n$-sets of $(M,\X)$.
If $A$ is a subset of $M$, we write $\Delta^0_n\text{-}\mathrm{Def}(M,A)$ for the collection of all the $\Delta_n(A)$-sets.
If $(M,\X)\models\bd\Sigma^0_n+\exp$ where $n\ge1$,
then $(M,\Delta^0_n\text{-}\mathrm{Def}(M,\X))\models\RCA^*_0$. A model of $\RCA^*_0$ is \emph{topped}
if it has the form $(M, \Delta^0_1\text{-}\mathrm{Def}(M,A))$ for some $A\subseteq M$.

The notation $A \le_\mathrm{T} B$ means that $A$ is $\Delta_1(B)$-definable.
The \emph{join} of $A$ and $B$, denoted by $A \oplus B$, is $\{\langle{0,k}\rangle:k \in A\} \cup \{\langle{1,k}\rangle:k \in B\}$;
note that $A \le_\mathrm{T} A \oplus B$ and $B \le_\mathrm{T} A \oplus B$,
but at the same time $A \oplus B$ is $\Delta_1(A,B)$-definable.
$A \equiv_\mathrm{T} B$ means that $A \le_\mathrm{T} B$ and $B \le_\mathrm{T} A$.
If $A$ is a subset of some model $M$ and $(M,A) \models \ind \Delta_0(A) + \exp$,
then $A^{(n)}$, the \emph{$n$-th jump} of $A$, is the $\Sigma_n(A)$-set
$\{\langle e, k \rangle: (M,A)\models\Sat_n(e,k,A)\}$. As usual, we write $A'$ for $A^{(1)}$, the \emph{jump} of $A$.
The following result provides a variant of the important method of ``jump inversion''
adapted to the setting of fragments of second-order arithmetic.

\begin{theorem}[Belanger~\cite{belanger:coh}]\label{thm:belanger-jump-inversion}
Let $n \ge 1$. Assume that $M$ is a structure and $A, C \subseteq M$ are such that $(M, A) \models \bd \Sigma^0_{n+1}$
and $(M, A' \oplus C) \models \bd \Sigma^0_n$. Then there exists $B \subseteq M$ such that $C$ is $\Delta_2(A \oplus B)$-definable and $(M, A \oplus B) \models \bd \Sigma^0_{n+1}$.
\end{theorem}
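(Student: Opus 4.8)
The plan is to obtain $B$ by a relativized Friedberg jump-inversion construction. Since $C$ need only be $\Delta_2(A \oplus B)$-definable, it is enough to build $B$ so that the jump of $A \oplus B$ codes exactly the prescribed oracle, i.e.\ so that $(A \oplus B)' \equiv_\mathrm{T} A' \oplus C$; in fact one wants only $C \le_\mathrm{T} (A \oplus B)'$ together with $(A \oplus B)' \le_\mathrm{T} A' \oplus C$, which come to the same thing. Granting such a $B$, the inequality $C \le_\mathrm{T} (A \oplus B)'$ says precisely that $C$ is $\Delta_2(A \oplus B)$-definable; and applying the jump $n-1$ further times gives $(A \oplus B)^{(n)} \equiv_\mathrm{T} (A' \oplus C)^{(n-1)}$, so the remaining conclusion $(M, A \oplus B) \models \bd\Sigma^0_{n+1}$ follows from the hypothesis $(M, A' \oplus C) \models \bd\Sigma^0_n$ via the standard equivalences $\bd\Sigma^0_{k+1}(Z) \leftrightarrow \bd\Sigma^0_1(Z^{(k)})$ together with the invariance of $\bd\Sigma^0_1$ under $\equiv_\mathrm{T}$.

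For the construction I would take $B = \bigcup_s \sigma_s$, a union of finite binary strings with $\sigma_0 = \langle\rangle$, where $\sigma_{s+1}$ is produced from $\sigma_s$ by first appending the bit $(A' \oplus C)(s)$ -- injecting the oracle into $B$ -- and then using $A'$ to test whether some extension $\tau$ of the resulting string satisfies $\Phi_s^{A \oplus \tau}(s)\!\downarrow$, passing to the least such $\tau$ if there is one and leaving the string as it is otherwise. (The least such $\tau$ exists because $(M, A) \models \bd\Sigma^0_{n+1}$ implies $\ind\Sigma^0_1(A)$.) The classical verifications then relativize without incident: the construction is carried out in $A' \oplus C$, so $B \le_\mathrm{T} A' \oplus C$ and deciding ``$e \in (A \oplus B)'$'' reduces to running it to stage $e$, whence $(A \oplus B)' \le_\mathrm{T} A' \oplus C$; conversely the stage boundaries $s \mapsto |\sigma_s|$ are $(A \oplus B)'$-computable, so the bit appended at the start of stage $s$ can be read back off, giving $A' \oplus C \le_\mathrm{T} (A \oplus B)'$.

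The real obstacle is that the construction above is an $\omega$-length recursion whose step is only $\Delta_1(A' \oplus C)$-definable, while the hypotheses supply only $\bd\Sigma^0_n$ for $A' \oplus C$ (and for $A'$) -- in particular not $\ind\Sigma^0_1(A')$ -- so one is not automatically entitled to iterate the step through all of $M$, and a priori the map $s \mapsto \sigma_s$, hence $B$ itself, might fail to be total. Arranging that the construction nevertheless goes through is the technical core of the theorem, and is exactly where the hypothesis is placed on $A$ at level $n+1$ rather than $n$: I would expect to need either a version of the construction in which the approximations $\sigma_s$ are kept of controlled size, so that it can be pushed through using only the induction available from $(M, A) \models \bd\Sigma^0_{n+1}$ (which, recall, yields $\ind\Sigma^0_1(A)$), or a reorganization as a forcing/tree-path argument that avoids unbounded recursion altogether, in the spirit of the constructions of cohesive-like sets over weak base theories. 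Once $B$ is in hand, everything else is the routine relativization sketched above, together with the standard bookkeeping relating iterated jumps to the $\bd\Sigma^0$-collection hierarchy.
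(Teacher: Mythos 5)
This is a statement the paper imports from B\'elanger's work without proof, so there is no internal argument to compare yours against; judged on its own terms, your proposal contains a genuine gap, and you have in fact located it yourself. The reduction in your first paragraph is fine: it suffices to produce $B$ with $C \le_\mathrm{T} (A\oplus B)'$ and $(A\oplus B)' \le_\mathrm{T} A'\oplus C$, since relativized $\bd\Sigma^0_k$ is preserved downwards under $\le_\mathrm{T}$ and the correspondence between $\bd\Sigma^0_{n+1}(Z)$ and $\bd\Sigma^0_n(Z')$ is standard (and used freely elsewhere in the paper). But the construction you then offer is exactly the classical Friedberg recursion of length $M$, whose step is total $(A'\oplus C)$-recursive; concluding that the iteration $s\mapsto\sigma_s$ is defined for every $s\in M$ is a $\Sigma_1(A'\oplus C)$-induction, and the hypotheses supply only $\bd\Sigma^0_n$ for $A'\oplus C$ (for $n=1$, only $\bd\Sigma_1$). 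This is not a removable formality: in a model of $\bd\Sigma_1+\exp+\neg\ind\Sigma_1$ there is a cofinal $\Delta_1$ set enumerated along a proper cut $I$, and the total $\Delta_1$ function ``next element of that set'' cannot be iterated $s$ times for $s$ above $I$, because a coded computation sequence of such a length would bound a cofinal subset of $M$. So under the stated hypotheses the map $s\mapsto\sigma_s$, hence $B$ itself, need not exist, and the ``routine relativization'' in your second paragraph has nothing to relativize. Your final paragraph names two possible repairs (size-controlled approximations, or a forcing reformulation avoiding unbounded recursion) but carries out neither; that repair is precisely the content of B\'elanger's theorem, so what is missing is not a detail but the proof.

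Two further cautions. First, you aim at full jump inversion $(A\oplus B)'\equiv_\mathrm{T} A'\oplus C$, which is stronger than the stated conclusion (only that $C$ is $\Delta_2(A\oplus B)$-definable and collection is preserved); the weaker formulation is a hint that the exact-jump route may demand more than collection provides, and a successful argument should be expected to settle for coding $C$ into the jump while merely bounding, not pinning down, $(A\oplus B)'$. Second, even granted $(A\oplus B)'\le_\mathrm{T} A'\oplus C$, you should say explicitly why $(M,(A\oplus B)')\models\bd\Sigma^0_n$ yields $(M,A\oplus B)\models\bd\Sigma^0_{n+1}$ in this weak setting (via the universal $\Sigma_1$ formula and $\ind\Delta_0(A\oplus B)+\exp$); this is standard but is exactly the kind of bookkeeping that needs the model to be handled carefully when induction fails.
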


A \emph{cut} in a model of arithmetic $M$ is any subset $I \subseteq M$
that contains $0$ and is closed downwards and under successor.
We can write $I \subseteq_\ee M$ to indicate that $I$ is a cut in $M$.
Note that if $(M,\X) \models \RCA^*_0$,
then no proper cut in $M$ can be a member of $\X$.
On the other hand, some proper cut is $\Sigma^0_1$-definable in $(M,\X)$
if and only if $(M,\X) \models \neg \ind \Sigma^0_1$.
If $I$ is a $\Sigma^0_1$-cut in $(M,\X)$,
then there is an \emph{unbounded} (i.e.~cofinal in $M$) set $A \in \X$
that can be enumerated in $\X$ in increasing order as $A = \{a_i: i \in I\}$.

For an element $s$ of a model $M$, $\Ack(s)$ stands for
$\{a \in M: M \models a \in_\Ack s\}$, where $\in_\Ack$
is the usual Ackermann interpretation of set theory in arithmetic
(``the $a$-th bit in the binary notation for $s$ is 1'').
In a model $(M,\X)$ of $\RCA^*_0$, sets of the form $\Ack(s)$
are exactly the bounded subsets of $M$ that belong to $\X$.
The following theorem states an important basic fact about such sets.

\begin{theorem}[{Chong--Mourad~\cite[Proposition~4]{chong-mourad:degree-cut}}]\label{lem:chong-mourad}
Let $(M,\X) \models \RCA^*_0$.
Then for every pair of bounded disjoint $\Sigma^0_1$-definable sets $X,Y \subseteq M$
there exists $s \in M$ such that $\Ack(s) \cap (X \cup Y) = X$.
\end{theorem}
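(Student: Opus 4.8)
The plan is to build $X$ and $Y$ up from bounded coded approximations and then invoke $\bd\Sigma^0_1$ (which $\RCA^*_0$ proves) in order to merge the separators of all the approximations into a single one. Fix $b$ with $X,Y\subseteq[0,b)$, and write $X=\{x<b:\exists u\,\varphi(x,u)\}$ and $Y=\{x<b:\exists u\,\psi(x,u)\}$ with $\varphi,\psi$ bounded (and possibly carrying second-order parameters). For $t\in M$ I would set $X_t=\{x<b:\exists u\le t\,\varphi(x,u)\}$ and $Y_t=\{x<b:\exists u\le t\,\psi(x,u)\}$; each is a bounded $\Delta^0_0$-definable set, hence — by $\Delta^0_1$-comprehension, $\exp$, and the identification of bounded members of $\X$ with the $\Ack(r)$'s — a member of $\X$ equal to $\Ack(r)$ for some $r<2^b$. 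The sequences $(X_t)_t,(Y_t)_t$ are non-decreasing with $\bigcup_t X_t=X$ and $\bigcup_t Y_t=Y$, and — this is where the hypotheses enter — since $X_t\subseteq X$, $Y_t\subseteq Y$ and $X\cap Y=\emptyset$, we get $X_t\cap Y_t=\emptyset$ for every single $t$.

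Next I would consider, for each $t$, the set $V_t=\{s<2^b:X_t\subseteq\Ack(s)\text{ and }\Ack(s)\cap Y_t=\emptyset\}$; the relation ``$s\in V_t$'' is $\Delta^0_0$ in $s$ and $t$ (with the same second-order parameters as $\varphi,\psi$). Two observations drive the argument: $V_t\neq\emptyset$, because the code of $X_t$ belongs to it (here $X_t\cap Y_t=\emptyset$ is used once more); and $(V_t)_t$ is non-increasing, since for $t\le t'$ any $s\in V_{t'}$ satisfies $X_t\subseteq X_{t'}\subseteq\Ack(s)$ and $\Ack(s)\cap Y_t\subseteq\Ack(s)\cap Y_{t'}=\emptyset$. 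It then suffices to show $\bigcap_t V_t\neq\emptyset$: if $s$ lies in the intersection then $X=\bigcup_t X_t\subseteq\Ack(s)$ and $\Ack(s)\cap Y=\bigcup_t(\Ack(s)\cap Y_t)=\emptyset$, so $\Ack(s)\cap(X\cup Y)=X$, which is the conclusion.

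Finally, to obtain $\bigcap_t V_t\neq\emptyset$, I would argue by contradiction: if it is empty then $\forall s<2^b\,\exists t\,(s\notin V_t)$, and since ``$s\notin V_t$'' is $\Delta^0_0$ and $2^b$ exists by $\exp$, $\bd\Sigma^0_1$ produces a single $T$ with $\forall s<2^b\,\exists t<T\,(s\notin V_t)$; by monotonicity of $(V_t)_t$, $s\notin V_t$ for some $t<T$ forces $s\notin V_T$, so $V_T=\emptyset$, contradicting $V_T\neq\emptyset$. I do not expect a genuinely hard step — the proof is short — but the point that must not be missed is that disjointness of $X$ and $Y$ is precisely what keeps every $V_t$ (in particular $V_T$) nonempty; dropping it would make the statement bounded $\Sigma^0_1$-comprehension, which is not available in $\RCA^*_0$, so the disjointness hypothesis is doing all of the work.
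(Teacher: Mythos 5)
Your proof is correct: the stage approximations $X_t, Y_t$, the monotone nonempty $\Delta^0_0$-definable families $V_t$ of separating codes, and the single application of $\bd\Sigma^0_1$ (all available in $\RCA^*_0$, together with coding of bounded $\Delta^0_0$-definable sets via $\exp$) fit together without a gap. Note that the paper itself gives no proof of this statement -- it is quoted from Chong--Mourad -- and your argument is essentially the standard derivation of that coding lemma from $\bd\Sigma^0_1 + \exp$, so there is nothing to reconcile with the text.
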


We use the symbol $\omega$ to denote the set of standard natural numbers
and the symbol $\N$ to denote the set of natural numbers
as formalized within $\RCA^*_0$. In other words, if $(M,\X)$ is a model of $\RCA^*_0$,
then $\N^{(M,\X)}$ is simply the first-order universe $M$.
This convention clashes with the custom of referring to a model
that has the same first-order universe as some other model
but a smaller second-order universe as an $\omega$-submodel.
Because of this, we use the term \emph{$\bbomega$-submodel}
(and, conversely, \emph{$\bbomega$-extension}) in such situations.

A structure $\mathfrak{A}$ is \emph{recursively saturated} if for any computable set of formulas
$\{\psi_n(\overline x, \overline y): n \in \omega\}$ and any tuple $\overline b$ of the appropriate length,
if $\mathfrak{A} \models \exists \overline x \bigwedge_{i = 0}^n \, \psi_i(\overline x, \overline b)$ for
each $n \in \omega$, then there is a tuple $\bar a$ such that $\mathfrak{A} \models \psi_n(\overline a, \overline b)$
for all $n$. Recursively saturated structures only play a very minor role in this paper;
for more on them, see e.g.~\cite[Chapters 11.2 \& 15]{Kaye91}.

\section{The isomorphism theorem}\label{sec:iso}

In this section, we state and prove our main theorem on isomorphisms between models of $\WKL^*_0 + \neg \ind \Sigma^0_1$. One can view the theorem as a generalization to second-order arithmetic
of a result of Kossak's \cite[Theorem~3.1]{kossak:extensions} (see~\cite{kossak:extensions-correction} or \cite{kaye:properties} for a correction to the proof) saying that every countable model of $\bd \Sigma_1(A) + {\exp} + \neg \ind \Sigma_1(A)$ has continuum many automorphisms. In fact, our proof is somewhat reminiscent of Kossak's argument,
in which one also finds a truth-coding trick that goes back to
   Smory\'nski~\cite[Lemma~1.2]{smorynski},
   Kotlarski~\cite[Lemma~4.4]{kotlarski:elem-cuts-rec-sat}, and
   Alena Vencovsk\'a~[unpublished].

\begin{theorem}\label{thm:wkl-iso}
Let $(M, \X)$ and $(M,\Y)$ be countable models of $\WKL^*_0$
such that $(M,\X \cap \Y) \models \neg \ind \Sigma^0_1$. 
Let $\overline{c}$ be a tuple of elements of $M$ and $\overline{C}$ be a tuple of elements of $\X \cap \Y$.  Then there exists an isomorphism $h$ between $(M, \X)$ and $(M,\Y)$ such that $h(\overline{c})=\overline{c}$ and $h(\overline{C})=\overline{C}$.
\end{theorem}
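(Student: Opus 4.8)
The plan is to build the isomorphism $h$ by a back-and-forth construction, alternately extending a finite approximation to match a fresh element of $M$, of $\X$, or of $\Y$. Since both models share the same first-order universe $M$, the first-order part of $h$ will be an automorphism of $M$ (not the identity in general), and the second-order part will send $\X$ bijectively onto $\Y$ in a way compatible with membership. The crucial structural input is a $\Sigma^0_1$-definable proper cut $I$ with an unbounded increasing enumeration $A = \{a_i : i \in I\}$, where $A$ may be taken in $\X \cap \Y$; the mismatch between $M$ and its cut $I$ is exactly the ``room'' we exploit, just as in Kossak's automorphism argument.

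\medskip

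The heart of the matter, and the step I expect to be the main obstacle, is the \emph{one-step extension lemma}: given a finite partial isomorphism $p$ (coding finitely much first- and second-order data, including the requirement $h(\bar c) = \bar c$, $h(\bar C) = \bar C$), and given a new object to be matched, produce an extension $p'$ still ``correct'' with respect to the relevant atomic and arithmetical diagram. To handle a new \emph{set} $B \in \X$, I want to find $B^* \in \Y$ whose complete arithmetical type over the current parameters (on the $\Y$-side, under the partial map) agrees with that of $B$ over the corresponding parameters on the $\X$-side. This is where $\WKL$ enters: one sets up a $0$--$1$ tree, in the model, whose infinite paths code candidate sets $B^*$ together with a ``partial satisfaction'' witness certifying type-agreement level by level along the cut $I$; the tree is infinite because each finite initial segment of the desired type \emph{is} realized (here we use that the theory is $\WKL^*_0$, so arithmetic comprehension is unavailable and we genuinely need compactness rather than a jump), and an infinite path exists in $\Y$ by $\WKL$. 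The truth-coding trick of Smory\'nski--Kotlarski--Vencovsk\'a--Kossak is what lets a \emph{single} path through a binary tree encode an entire $\Sigma^0_1$-indexed family of satisfaction conditions, using the cut $I$ to bound the ``length'' of the coded information while $M$ provides the ambient space; the Chong--Mourad lemma (Theorem~\ref{lem:chong-mourad}) and the existence of an unbounded enumerated $A$ are the tools that make such coded sequences available as genuine sets.

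\medskip

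The remaining steps are then comparatively routine. One checks that the back-and-forth is \emph{symmetric}: the same argument with $\X$ and $\Y$ swapped handles a new set $B' \in \Y$, so the limit map is onto on both sides. One checks that matching a new \emph{first-order} element $b \in M$ is a special case (or an easier case) of the same extension lemma, arranging that the resulting first-order map respects $+$, $\cdot$, $\le$ and fixes $\bar c$ -- here one uses that the first-order theory $\bd\Sigma_1 + \exp$ is preserved and that $M$ itself, being fixed, imposes no obstruction beyond type-agreement. One verifies that along the construction the maintained condition ``$p$ preserves the $\Sigma^0_n$-diagram for all standard $n$'', together with the universal satisfaction predicates $\Sat_n$ available in $\RCA^*_0$, guarantees that the union $h = \bigcup p$ preserves \emph{all} arithmetical formulas and hence, since membership $k \in X$ is $\Sigma^0_0$, is a genuine isomorphism of the two-sorted structures. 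Finally one enumerates $M$, $\X$, $\Y$ (all countable) and interleaves the three kinds of extension step so that every element is eventually treated, yielding the desired $h$.
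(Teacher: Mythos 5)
Your overall skeleton (back-and-forth, Chong--Mourad coding, extraction of witnesses on the other side, the $\Sigma^0_1$-cut $I$ with the cofinal enumerated set $A$, $\WKL$ for the set step) matches the paper's proof, but there is a genuine gap at the heart of the argument: the inductive condition you propose to maintain --- ``$p$ preserves the $\Sigma^0_n$-diagram for all standard $n$'' --- is both inexpressible inside the models and too weak to drive the extension step. The structures $(M,\X)$ and $(M,\Y)$ need not be recursively saturated, so agreement of the current tuples on all standard arithmetical formulas gives no reason why the arithmetical type of a new element $r^*$ (or set $R^*$) over the current parameters should be realized on the $\Y$-side at all; in particular, when you build your tree in $\Y$, you never explain why it is infinite. ``Each finite initial segment of the desired type is realized'' is only known on the $\X$-side (witnessed by $R^*$ itself); transferring that information to the $\Y$-side is exactly the difficulty, and standard-formula agreement cannot transfer it, because the statement to be moved across is in effect a conjunction of nonstandardly many conditions.

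The paper's solution is a quantitatively calibrated invariant that your proposal lacks: there exist $b>I$ and a nonstandard $\epsilon$ such that the two sides agree on all $\Delta_0$ formulas with G\"odel number below $\epsilon$, with number parameters below $b$, evaluated at the points $a_i$ of the cofinal set $A$. At each step one codes the restricted $\Delta_0$-type of the new object into a single number $j^*$ via Chong--Mourad, expresses ``this coded type is realized up to level $a_i$'' as a single (nonstandard) $\Delta_0$ formula of G\"odel number below $\epsilon$ with parameters below $b$, and transfers it using the invariant; only after this transfer does $\bd\Sigma^0_1$ (first-order step) or $\WKL$ (second-order step) produce the witness on the $\Y$-side. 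Maintaining the invariant over infinitely many steps requires shrinking the bounds (roughly $b\mapsto\log\log\log b$ and $\epsilon\mapsto\log\log\epsilon$), which is why $I$ is arranged to be closed under exponentiation. Without an invariant of this kind, explicitly involving nonstandard formulas and parameters above $I$, the back-and-forth cannot proceed, so your argument as written does not go through even though the tools you name are the right ones. (A minor further point: the first-order step is not a special case of the set step; it is handled by a $\bd\Sigma^0_1$ collection argument rather than by $\WKL$.)
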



\begin{proof}
Since $(M,\X \cap \Y) \models \neg \ind \Sigma^0_1$, there exists a set $A \in \X \cap \Y$ cofinal in $M$ and a proper cut $I \subsetneq_\ee M$  such that $A$ can be enumerated in increasing order as $A = \{a_i: i \in I\}$. We may assume w.l.o.g.~that $I$ is closed under $\exp$ (see \cite[Theorem~2.4]{hajek:interpretability} or~\cite[Lemma 9]{ky:categorical}) and that the tuple of sets $\overline C$ contains both $A$ itself and the $\Delta_1(A)$-definable set $\{\tuple{i,a_i}: i \in I\}$, so that the relation ``$x = a_i$'' between $x$ and $i$ is $\Delta_0(\overline C)$-definable. Finally, by adding a sufficiently large number to all the elements of $A$ if necessary, we may assume that $a_0 > I$.


We build the isomorphism $h$ by a back-and-forth construction.
At each step of the construction, we have finite tuples $\overline r, \overline s$, both in $M$,
and $\overline R, \overline S$, in $\X, \Y$ respectively, such that we have committed to
$h(\overline r) = \overline s$ and $h(\overline R) = \overline S$.
Initially, $\overline r = \overline s  = \overline{c}$ and $\overline R = \overline S = \overline{C}$.
In each step, we add either a first- or a second-order element to either the domain or the range of $h$,
so that after $\omega$ steps of the construction $h$ becomes a bijection from $(M,\X)$ onto $(M, \Y)$.
The inductive condition we maintain is:
\begin{enumerate}
\renewcommand{\theenumi}{\#}
\renewcommand{\labelenumi}{(\theenumi)}
\item\label{cond:hash}
there exist $b >I$ and $\epsilon > \omega$ in $M$ such that for each $i \in I$,   \\
each $j < b$, and each $\Delta_0$ formula $\delta$ (with G\"odel number) $< \epsilon$,\\
$(M, \X) \models \delta(a_i,j,\overline r, \overline R)$ iff $(M, \Y) \models \delta(a_i,j,\overline s, \overline S)$.
\end{enumerate}
Of course, the statement that the (potentially nonstandard) formula $\delta$ is satisfied in a model is expressed using a fixed truth definition for $\Delta_0$ formulas, $\mathrm{Sat}_{\Delta_0}$. Note that \eqref{cond:hash} holds at the beginning of the construction with $\tuple{\overline r, \overline R} = \tuple{\overline s, \overline S} = \tuple{\overline{c}, \overline{C}}$.
We have to verify that it can be preserved at each step of the construction.
There are two kinds of step to consider: adding a first-order element to our tuple
and adding a second-order element.

\paragraph{First-order step.}
Let us consider the case where we want to add a new first-order element $r^*$ to the domain
of our map $h$, so we need to find $s^* \in M$ such that the tuples $\overline r, r^*, \overline R$ and $\overline s, s^*, \overline S$
still satisfy \eqref{cond:hash}.  The construction for adding a first-order element to the range of $h$ is analogous.

By the inductive assumption, we have $b >I$ and $\epsilon > \omega$ witnessing \eqref{cond:hash} for $\overline r, \overline R, \overline s, \overline S$. Define $b' = \log\log\log b$ and $\epsilon' = \log \log \epsilon$. Notice that $b' >I$ and $\epsilon' > \omega$, since both $I$ and $\omega$ are closed under $\exp$.

Let $D$ stand for the following definable subset of $(I \times b' \times \epsilon')$:
\begin{multline*}
\{\tuple{i,j,\gn{\delta}} : i \in I, j < b', \gn{\delta} < \epsilon' \textrm{ is (the G\"odel number of) a } \Delta_0 \textrm{ formula,} \\ \textrm{and } (M,\X) \models \delta(a_i,j,\overline r, r^*,\overline R)) \}
\end{multline*}
Both $D$ and $(I \times b' \times \epsilon') \setminus D$ are bounded and $\Sigma^0_1$-definable, so by Theorem \ref{lem:chong-mourad} there exists $j^* \in M$ coding a set $\Ack(j^*)$ such that $\Ack(j^*) \cap (I \times b' \times \epsilon')= D$.  Moreover, since $I < b' \ll \log \log b$ and since we can choose $\epsilon$ small enough, we can assume that $j^* < \log b$.

Let $i^* \in I$ be such that $r^* \le a_{i^*}$.
By the definition of $j^*$, for each $i \in I$ the structure $(M,\X)$ satisfies:
\begin{equation}\label{eqn:char-of-r^*}
\eb{y}{a_{i^*}}\ab{i'}{i}\ab{j}{b'}\bigwedge_{\gn{\delta} < \epsilon'}\bigl(\delta(a_{i'},j,\overline r, y, \overline R)  \leftrightarrow \tuple{i',j,\gn{\delta}} \in_\Ack j^* \bigr),
\end{equation}
as witnessed by $y\defeq r^*$. If $i \in I$
and $i \ge i^*$,
then (\ref{eqn:char-of-r^*}) implies:
\begin{multline}\label{eqn:char-of-r^*1}
\eb{x, \! y}{a_i} \Biggl(x = a_{i^*} \land y \le x \land \ab{i'}{i}\ab{v}{a_i}\Bigl(v = a_{i'} \rightarrow \\
\rightarrow \ab{j}{b'}
\bigwedge_{\gn{\delta} < \epsilon'}\bigl(\delta(v,j,\overline r, y, \overline R)  \leftrightarrow \tuple{i',j,\gn{\delta}} \in_\Ack j^* \bigr)\Bigr)\Biggr).
\end{multline}

For each fixed $i \in I$, the statement (\ref{eqn:char-of-r^*1})
can be expressed as a (nonstandard) $\Delta_0$ formula $\gamma$ with parameters
$i^*, b', j^*, a_i, \overline r, \overline R$. The formula $\gamma$ consists of a fixed-length part independent of $\epsilon'$, followed by a conjunction whose conjuncts correspond to $\Delta_0$ formulas $\delta$ with $\gn{\delta} < \epsilon'$. Each such conjunct is an equivalence, with the formula $\delta$, which consists of at most $\log \epsilon'$ symbols, written out on the left-hand side and with $\gn{\delta}$ referred to by its (say binary) numeral on the right-hand side. Thus, in total, $\gamma$ has length $O(\epsilon' \cdot \log (\epsilon'))$, which means that $\gn{\gamma} < \epsilon$. Moreover, the tuple of parameters $\tuple{{i^*}, b', j^*}$ is smaller than $b$.

We can therefore apply \eqref{cond:hash} from the inductive hypothesis with $\delta \defeq \gamma$ and $j\defeq\tuple{{i^*}, b', j^*}$ in order to conclude that for each sufficiently large $i \in I$, the structure $(M, \Y)$ satisfies (\ref{eqn:char-of-r^*1}) with $\overline r, \overline R$ replaced by $\overline s, \overline S$. From this it easily follows that $(M, \Y)$ in fact satisfies
\begin{equation}\label{eqn:char-of-s^*}
\eb{y}{a_{i^*}}\ab{i'}{i}\ab{j}{b'}\bigwedge_{\gn{\delta} < \epsilon'}\bigl(\delta(a_{i'},j,\overline s, y, \overline S)  \leftrightarrow \tuple{i',j,\gn{\delta}} \in \Ack(j^*) \bigr)
\end{equation}
for each $i \in I$. \leszek{Note that (\ref{eqn:char-of-s^*}) is (\ref{eqn:char-of-r^*}) with $\overline r, \overline R$ replaced by $\overline s, \overline S$.}

By $\bd\Sigma^0_1$ in $(M,\Y)$, there must exist some $y  \le a_{i^*}$ that witnesses (\ref{eqn:char-of-s^*}) for all $i \in I$. We can choose any such $y$ as our $s^*$.

\paragraph{Second-order step.}
This step is somewhat similar to the first-order one, with the role of $\bd \Sigma^0_1$ now played by $\WKL$.
As before, we consider only the case where we want to add a new second-order element $R^* \in \X$
to the domain of $h$ and we need to find $S^* \in \Y$ such that $\overline r, \overline R, R^*$ and $\overline s, \overline S, S^*$ still satisfy \eqref{cond:hash}.

By the inductive assumption, we have $b >I$ and $\epsilon > \omega$ witnessing \eqref{cond:hash} for $\overline r, \overline R, \overline s, \overline S$. The parameters $b'$ and $\epsilon'$ are defined as in the first-order step. The set $D$ and the element $j^*$ are also defined as before, but with $\overline r,r^*, \overline R$ replaced by $\overline r, \overline R, R^*$.



It follows from the definition of $j^*$ that $(M,\X)$ satisfies
\begin{multline}\label{eqn:char-of-R^*}
\exists F  \! \subseteq \! [0, \log a_{i})\,\ab{i'}{i}\ab{v}{\log \log a_i} \Bigl(v = a_{i'} \rightarrow \\
\rightarrow \ab{j}{b'}\bigwedge_{\gn{\delta}< \epsilon'}\bigl(\delta(v,j,\overline r,  \overline R, F)  \leftrightarrow \tuple{i',j,\gn{\delta}} \in_\Ack j^*\bigr) \Bigr)
\end{multline}
for each large enough $i \in I$, as witnessed by $F\defeq R^* \cap [0,\log {a_{i}})$. (The reason for the restriction to large $i \in I $ is that we want all terms in a $\Delta_0$ formula $\delta$ with $\gn{\delta} < \epsilon'$ to evaluate to a number below $\log a_i$ on arguments below $\max(\log\log a_i, b', \leszek{\max(\overline r)})$. In this way, the value of $\delta$ on such arguments is unchanged when we replace $R^*$ by $R^* \cap [0,\log {a_{i}})$.)

Arguing as in the first-order case, one can check that for a fixed $i \in I$ the statement (\ref{eqn:char-of-R^*})
is equivalent to a $\Delta_0$ formula with G\"odel number below $\epsilon$ and parameters
$b', j^*, a_i, \overline r, \overline R$, where the tuple $\tuple{b', j^*}$ is
below $b$. Therefore, we can apply \eqref{cond:hash} from the inductive hypothesis to conclude that $(M,\Y)$ satisfies
\begin{multline}\label{eqn:char-of-S^*}
\exists F  \! \subseteq \! [0, \log a_{i})\,\ab{i'}{i}\ab{v}{\log \log a_i} \Bigl(v = a_{i'} \rightarrow \\
\rightarrow \ab{j}{b'}\bigwedge_{\gn{\delta}< \epsilon'}\bigl(\delta(v,j,\overline s,  \overline S, F)  \leftrightarrow \tuple{i',j,\gn{\delta}} \in_\Ack j^*\bigr) \Bigr)
\end{multline}
for each large enough $i \in I$. \leszek{Note that (\ref{eqn:char-of-S^*}) is} 
(\ref{eqn:char-of-R^*}) with $\overline r, \overline R$ replaced by $\overline s, \overline S$.

Let $T \in \Y$ be the tree consisting of all 0--1 strings $\sigma$ such that, for each $i \in I$
large enough (the largeness is expressed by a single inequality, with no quantifiers involved) and such that $a_{i} < |\sigma|$, the string $\sigma{\upharpoonright}_{\log a_i}$ is the characteristic function of $F \subseteq [0,\log a_i)$ satisfying (\ref{eqn:char-of-S^*}) for $i$.
By the previous paragraph, there are arbitrarily large elements of $T$, so by $\WKL$ in $(M, \Y)$,
there is an infinite path $B \in \Y$ through $T$. Any such path is the characteristic function of a set that we can use as $S^*$.
\end{proof}

\begin{remark}
A more refined version of the proof of Theorem \ref{thm:wkl-iso},
in which the exponentially closed cut used to lower-bound the parameter $b$
is decoupled from the $\Sigma^0_1$-cut indexing a cofinal subset of the first-order universe,
can be used to show the following. If $(M,A)$ is a countable model of $\bd \Sigma_1(A) + {\exp} + \neg \ind \Sigma_1(A)$,
then any exponentially closed cut in $M$ that contains some $\Sigma_1(A)$-definable cut
is the greatest initial segment fixed pointwise by some automorphism of $(M,A)$.
This is also the information about greatest pointwise fixed initial segments
that can be obtained from the previously published arguments drawing on \cite{smorynski}
such as those of \cite{kossak:extensions}/\cite{kossak:extensions-correction} or \cite{kaye:properties}.
\end{remark}

We record the following immediate consequence of Theorem \ref{thm:wkl-iso}.
Further implications of the theorem for $\WKL^*_0$ are discussed in the next section.

\begin{corollary}\label{cor:wkl-elem-equiv}
Let $(M, \X)$, $(M, \Y)$ be models of $\WKL^*_0$
such that $(M,\X \cap \Y) \models \neg \ind \Sigma^0_1$. Let $\overline{c}$ be a tuple of elements of $M$ and $\overline{C}$ be a tuple of elements of $\X \cap \Y$.

Then $(M, \X, \overline{c}, \overline{C}) \equiv (M, \Y, \overline{c}, \overline{C})$,
and if $\X \subseteq \Y$ then $(M,\X) \preccurlyeq (M,\Y)$,
where the elementarity applies to all $\mathcal{L}_2$-formulas.
\end{corollary}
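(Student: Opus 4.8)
\emph{Plan.} Theorem~\ref{thm:wkl-iso} already gives the isomorphism --- hence everything --- when the models are countable, so the whole content of the corollary is to propagate the conclusion to arbitrary, possibly uncountable, $(M,\X)$ and $(M,\Y)$. The plan is a downward Löwenheim--Skolem reduction, carried out not on $(M,\X)$ and $(M,\Y)$ separately (that would destroy the shared first-order universe) but on the single structure $\mathcal{N}$ obtained by regarding $\X$ and $\Y$ as two distinct second-order sorts over the common $M$, each with its own membership relation to $M$. The first point is that every hypothesis of the corollary can be read off inside $\mathcal{N}$: ``$(M,\X)\models\WKL^*_0$'' and ``$(M,\Y)\models\WKL^*_0$'' are schemes of $\mathcal{L}(\mathcal{N})$-sentences (with $\WKL$ a single one); a subset of $M$ lying in $\X\cap\Y$ is named by a pair of an $\X$-element and a $\Y$-element satisfying the $\mathcal{L}(\mathcal{N})$-sentence asserting they have the same members in $M$; and, using the universal $\Sigma_1$-satisfaction predicate $\Sat_1$ available in $\RCA^*_0$, ``$(M,\X\cap\Y)\models\neg\ind\Sigma^0_1$'' becomes the $\mathcal{L}(\mathcal{N})$-statement that there are $P_\X\in\X$, $P_\Y\in\Y$ with the same members in $M$, and some $e$ and finitely many first-order parameters, such that $\Sat_1(e,\cdot,P_\X)$ holds at $0$, is closed under successor, and fails somewhere.

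For the first assertion I would argue by contradiction: suppose some $\mathcal{L}_2$-sentence $\varphi$ with parameters $\overline c$ and $\overline C$ holds in $(M,\X,\overline c,\overline C)$ but not in $(M,\Y,\overline c,\overline C)$. Take a countable $\mathcal{N}_0=(M_0,\X_0,\Y_0)\preccurlyeq\mathcal{N}$ whose universe contains $\overline c$, the $\X$- and $\Y$-representatives of the entries of $\overline C$, and $\X$- and $\Y$-representatives of a common $\neg\ind\Sigma^0_1$-witness together with its first-order parameters. Since every $\mathcal{L}_2$-formula in the language of $(M,\X)$, with its set quantifiers read as ranging over the $\X$-sort, is an $\mathcal{L}(\mathcal{N})$-formula not mentioning the $\Y$-sort, the elementarity $\mathcal{N}_0\preccurlyeq\mathcal{N}$ yields $(M_0,\X_0)\preccurlyeq(M,\X)$ and $(M_0,\Y_0)\preccurlyeq(M,\Y)$; in particular both are countable models of $\WKL^*_0$. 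The ``same members in $M$'' sentences transfer to $\mathcal{N}_0$, so $P_\X,P_\Y$ (and, for each entry of $\overline C$, its two representatives) have a common extension in $M_0$; hence $\neg\ind\Sigma^0_1$ is witnessed in $(M_0,\X_0\cap\Y_0)$ and the entries of $\overline C$ yield a tuple $\overline{C_0}$ over $\X_0\cap\Y_0$. Theorem~\ref{thm:wkl-iso} then produces an isomorphism $h\colon(M_0,\X_0)\to(M_0,\Y_0)$ with $h(\overline c)=\overline c$ and $h(\overline{C_0})=\overline{C_0}$, so $(M_0,\X_0,\overline c,\overline{C_0})$ and $(M_0,\Y_0,\overline c,\overline{C_0})$ are elementarily equivalent; but $(M_0,\X_0)\preccurlyeq(M,\X)$ and $(M_0,\Y_0)\preccurlyeq(M,\Y)$ force the former to satisfy $\varphi$ and the latter $\neg\varphi$, a contradiction.

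For the second assertion, $\X\subseteq\Y$ gives $\X\cap\Y=\X$, so the first assertion applies to every $\overline c$ from $M$ and every $\overline C$ from $\X$, and $(M,\X)$ is a two-sorted substructure of $(M,\Y)$. I would then invoke the two-sorted Tarski--Vaught test: for an $\mathcal{L}_2$-formula $\exists y\,\psi$ or $\exists Y\,\psi$ with parameters from $M$ and $\X$ that holds in $(M,\Y)$, elementary equivalence with those parameters yields a witness already in $M$ or in $\X$, and elementary equivalence with that witness adjoined as a parameter shows it still works over $(M,\Y)$; hence $(M,\X)\preccurlyeq(M,\Y)$.

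I expect the step demanding the most care to be the Löwenheim--Skolem bookkeeping: one must phrase ``$\overline C\subseteq\X\cap\Y$'' and ``$(M,\X\cap\Y)\models\neg\ind\Sigma^0_1$'' as properties of elements of the three-sorted $\mathcal{N}$ rather than of the derived structure on $\X\cap\Y$, and then verify that descending to $\mathcal{N}_0$ both preserves these (so that Theorem~\ref{thm:wkl-iso} is applicable) and is harmless for the truth of $\varphi$ in the two two-sorted reducts. Beyond that, the argument is a routine combination of Theorem~\ref{thm:wkl-iso} with standard two-sorted model theory.
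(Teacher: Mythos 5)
Your argument is correct and is essentially the filling-in of what the paper treats as immediate: for countable models the parameter-fixing isomorphism of Theorem~\ref{thm:wkl-iso} gives elementary equivalence outright, and your three-sorted downward L\"owenheim--Skolem reduction (keeping the shared first-order universe, the representatives of $\overline C$, and a witness to $\neg\ind\Sigma^0_1$ in $\X\cap\Y$) is the standard way to handle uncountable $(M,\X)$, $(M,\Y)$. The only remark worth making is that the second assertion needs no Tarski--Vaught detour: since $\X\subseteq\Y$ makes every parameter tuple from $(M,\X)$ a tuple from $M$ and $\X\cap\Y$, elementary equivalence with all such parameters is literally the statement $(M,\X)\preccurlyeq(M,\Y)$.
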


\section{Consequences for $\WKL^*_0$} \label{sec:wklstar}

In this section, we show how Theorem \ref{thm:wkl-iso} implies that
Weak K\"onig's Lemma combined with the failure of $\Sigma^0_1$-induction
has a number of unusual properties. In particular, we prove that in $\WKL^*_0 + \neg \ind \Sigma^0_1$ the analytic hierarchy collapses (Corollary \ref{cor:analytic-hierarchy-collapses}, Theorem \ref{thm:analytic-to-arithmetical}) and the low basis theorem fails in a very strong sense (Theorem \ref{thm:no-low-basis}). We also show that $\WKL$ is the strongest $\Pi^1_2$ statement that is $\Pi^1_1$-conservative over $\RCA^*_0 + \neg \ind \Sigma^0_1$ (Theorem \ref{thm:pi11-cons-over-not-is1}).

\leszek{Most of the main results of this section can be interpreted 
in general model-theoretic terms, cf.~e.g.~\cite[Chapter 2.2]{tent-ziegler}.
If we view 
arithmetical formulas as quantifier-free, $\Pi^1_1$ as purely universal, $\Pi^1_2$ as $\forall \exists$, and so forth, 
then, for instance, Corollary~\ref{cor:analytic-hierarchy-collapses}
says that the $\forall\exists $~theory $\WKL_0^*+\neg\ind\Sigma^0_1$
is model complete. Together with the $\Pi^1_1$~conservativity of $\WKL_0^*+\neg\ind\Sigma^0_1$
over $\RCA^*_0+\neg\ind\Sigma^0_1$ from Simpson--Smith~\cite{simpson-smith},
this implies that $\WKL_0^*+\neg\ind\Sigma^0_1$ is the model companion
of $\RCA^*_0+\neg\ind\Sigma^0_1$ (by Theorem \ref{thm:analytic-to-arithmetical},
it actually has the slightly stronger property of being a model completion of $\RCA^*_0+\neg\ind\Sigma^0_1$).
Thus, models of $\WKL_0^*+\neg\ind\Sigma^0_1$ are precisely
the existentially closed models of $\RCA^*_0+\neg\ind\Sigma^0_1$,
and $\WKL_0^*+\neg\ind\Sigma^0_1$ is the strongest $\forall\exists$-theory that has the same
purely universal consequences as $\RCA^*_0+\neg\ind\Sigma^0_1$
(this is essentially what Theorem~\ref{thm:pi11-cons-over-not-is1} tells us).}

\leszek{In fact, most of the results mentioned above could be proved using Corollary~\ref{cor:wkl-elem-equiv}
and general model-theoretic arguments. 
However, we give proofs that are more specialized to arithmetic, because they
provide some additional information.}
We begin by checking that $\Sigma^0_1$-induction is not needed to justify the well-known fact that a model of $\WKL$ contains many coded submodels of $\WKL$ \cite[Corollary VIII.2.7]{simpson:sosoa}.

\begin{definition}
Given $(M,\X) \models \RCA^*_0$, a set $W \in \X$, and an element $k \in M$, let $W_k$ stand for $\{j \in M: \tuple{k,j} \in W\}$. We say that $W$ \emph{codes} the family of sets $\{W_k: k\in M\}$, and we refer to $(M, \{W_k: k\in M\})$ as a \emph{coded $\bbomega$-submodel of $(M,\X)$}.
We use the abbreviated term \emph{coded $\bbomega$-model} if $(M, \X)$ is clear from the context or irrelevant, and also to refer to $(\N, \{W_k: k\in \N\})$ when working in a formal theory.
\end{definition}

\begin{lemma}\label{lem:wkl-coded}
Let $(M, \X) \models \RCA^*_0$ and let $A \in \X$. There exists a $\Delta_1(A)$-definable infinite 0--1 tree $T \in \X$ such that if $W \subseteq M$ is an infinite path in $T$, then $(M, \{W_k: k\in M\})$ is a model of $\WKL^*_0$ with $A = W_0$.

As a consequence, for every $(M,\X) \models \WKL^*_0$ and every $A \in \X$,
there exists $W \in \X$ coding an $\bbomega$-model $(M,\W)$ of $\WKL^*_0$ with $A \in \W$.
\end{lemma}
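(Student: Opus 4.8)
The plan is to build, inside $(M,\X)$, a single $\Delta_1(A)$-definable tree $T$ whose paths code $\bbomega$-models of $\WKL^*_0$ containing $A$, by combining the standard "jumpless" construction of a Turing ideal closed under $\Delta^0_1$-definability and $A$ with the usual trick for making all the relevant trees have paths. First I would fix a $\Delta_1(A)$-definable enumeration of all pairs $\langle e, \langle k_0,\dots,k_{m-1}\rangle\rangle$, where $e$ is (the code of) a $\Delta_0$ formula and the $k_i$ are indices of columns; the set coded by such a pair, given an oracle $W$, is intended to be the set $\Phi^{W_{k_0}\oplus\cdots\oplus W_{k_{m-1}}}_e$ defined by $\Delta_1$-comprehension relative to the join of finitely many existing columns. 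The tree $T$ will consist of all $0$--$1$ strings $\sigma$ such that, reading $\sigma$ as (an initial segment of) a set $W$ with columns $W_0, W_1,\dots$, we have: (i) $W_0$ agrees with $A$ on the relevant initial segment; (ii) for each code $\langle e,\bar k\rangle$ up to the length of $\sigma$, if the $\Sigma_1$ and $\Pi_1$ definitions indexed by $e$ over $W_{k_0}\oplus\cdots\oplus W_{k_{m-1}}$ agree on the initial segment determined by $|\sigma|$, then the column of $W$ assigned to that code equals that common value on that initial segment; and (iii) for each index $k$ up to $|\sigma|$ that is supposed to hold a path through a $0$--$1$ tree $W_{k'}$ that has already appeared as a column and that looks infinite (again, "looks infinite up to $|\sigma|$"), the column $W_k$ restricted to $|\sigma|$ is a string in that tree.

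The key steps, in order, are then: (1) check that $T$ is $\Delta_1(A)$-definable — conditions (i)--(iii) are all bounded arithmetical predicates with oracle $A$ once we unwind $\mathrm{Sat}_{\Delta_0}$ and the finite joins, so $T \in \X$ by $\Delta^0_1$-comprehension in $(M,\X)$; (2) check that $T$ is infinite, i.e.\ has arbitrarily long strings: given a bound, one builds an approximation to a genuine coded $\bbomega$-model by a finite injury-free stage construction inside $(M,\X)$, using that $\X$ itself is closed under $\Delta^0_1(A,\dots)$-comprehension and, crucially, that $(M,\X)\models\WKL$, so that each tree-column that genuinely appears in $\X$ really does have a path in $\X$ that we can plug into the designated column — this needs only the bounded amount of information below the given bound, so no induction beyond $\Delta^0_1$ is used; (3) given an infinite path $W$ through $T$ with $W\in\X$ (for the first assertion one only postulates such a $W$; for the "as a consequence" clause one gets it from $\WKL$ in $(M,\X)$ applied to $T$), verify that $(M,\{W_k: k\in M\})$ satisfies $\RCA^*_0$ — the $\Delta^0_1$-comprehension and $\Delta^0_1$-induction axioms of the coded model reduce to condition (ii) guaranteeing that every set defined by a pair of equivalent $\Sigma_1/\Pi_1$ definitions over finitely many columns is again a column, while $\exp$ and $\bd\Sigma_1$ are inherited from $M$ since the first-order part is unchanged; and (4) verify $\WKL$ in the coded model, which is exactly what condition (iii) was designed for: any $0$--$1$ tree that is a set of the coded model is some column $W_{k'}$, it is infinite as a set of the coded model iff it is genuinely unbounded, and then the designated column $W_k$ is, by (iii), a path through it. Finally $W_0 = A$ by (i).

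The main obstacle, and the point requiring the most care, is step (2): showing $T$ is infinite \emph{without} using $\Sigma^0_1$-induction. The naive approach — "enumerate the requirements and satisfy them one by one" — implicitly uses induction on the number of requirements. The fix is the standard one for $\RCA^*_0$-style arguments: we do not need a path through $T$ to exist in $\X$ in order to see $T$ is infinite; we need, for each $\ell\in M$, a single string of length $\ell$ in $T$, and producing that string is a $\Delta_0(A,\dots)$-recursive-in-a-bounded-amount-of-data task, because only columns $W_k$ with $k<\ell$ and only formula-codes $e<\ell$ are constrained, and the tree-columns that must be honoured by (iii) are among finitely many explicitly given columns, each of which (being a set of $(M,\X)$) has a path \emph{in $\X$} by $\WKL$ in $(M,\X)$. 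So for each $\ell$ we assemble the required length-$\ell$ string from $A$ together with finitely many sets already in $\X$; this is a legitimate $\Delta^0_1$ construction in $(M,\X)$ and yields a witness that $T$ has an element above every $\ell$. One must be slightly careful about the dovetailing so that the "looks infinite up to $|\sigma|$" clauses in (iii) are consistent with what the genuine paths do — this is handled, as in \cite[VIII.2]{simpson:sosoa}, by only imposing (iii) for a tree-column once it has visibly produced strings up to the current length, so that honouring it is always possible. The "as a consequence" clause is then immediate: in $(M,\X)\models\WKL^*_0$, apply $\WKL$ to the infinite $0$--$1$ tree $T\in\X$ to get a path $W\in\X$, and $(M,\{W_k:k\in M\})$ is the desired coded $\bbomega$-model of $\WKL^*_0$ containing $A=W_0$.
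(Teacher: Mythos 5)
There is a genuine gap, and it is concentrated exactly where you yourself locate the main difficulty: step (2). The first (and main) assertion of the lemma assumes only $(M,\X) \models \RCA^*_0$, but your verification that $T$ is infinite explicitly invokes $\WKL$ in the ambient model (``each tree-column that genuinely appears in $\X$ really does have a path in $\X$''). So as written your construction only establishes the lemma when the ambient model already satisfies $\WKL^*_0$, which suffices for the ``as a consequence'' clause but not for the statement as given. This weakening is not harmless for the paper: the $\RCA^*_0$-only version is what gets used later (in Theorem \ref{thm:no-low-basis}, in the definition of the forcing notion $\mathbb{P}_{X,A}$, and in Lemma \ref{lem:ll-facts}), where one needs a $\Delta_1(A)$-definable infinite tree whose paths code models of $\WKL^*_0$ while the ambient structure is only assumed to satisfy $\RCA^*_0$ (or a relativized analogue). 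Moreover, the difficulty is not merely bookkeeping: with your path-based requirement (iii), producing a length-$\ell$ node without ambient $\WKL$ runs into the extendibility problem — a node you commit to in a tree-column that ``looks infinite'' need not extend, and there is no obvious $\Delta^0_1$ way to choose approximations that are guaranteed to survive, which is precisely why the naive ``designated column is a path through the tree-column'' coding is hard to certify infinite inside $\RCA^*_0$.

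The paper's proof sidesteps this by changing what the columns are required to do: instead of coding paths through trees, it codes $\Sigma^0_1$-\emph{separation} (which is equivalent to $\WKL$ over $\RCA^*_0$ and implies $\Delta^0_1$-comprehension). The tree enforces only that column $0$ equals $A$, that designated columns realize joins, and that for each pair of $\Sigma_1$ formulas over a column, the designated column separates them \emph{provided no common witness has appeared below the current level}. Such requirements can always be met greedily at each finite level (decide membership by whichever witness shows up first below $s$), so $T$ is infinite provably in $\RCA^*_0$ with no appeal to paths in $\X$ and no extendibility issue; $\ind\Delta^0_0$ in the coded model comes for free because bounded initial segments of any path are Ackermann-coded in $M$. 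If you reformulate your conditions (ii)--(iii) as separation requirements of this conditional, finitely falsifiable form, your argument goes through under the stated hypothesis; otherwise step (2) fails for models of $\RCA^*_0$ that do not satisfy $\WKL$.
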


Note that it is not assumed in the lemma that $W \in \X$ or even that $\bd\Sigma_1(W)$ holds:
an arbitrary subset $W \subseteq M$ with the property that the characteristic function of $W{\upharpoonright}_k$ is a node of $T$
for every $k \in M$ will have the property that $(M, \{W_k: k\in M\}) \models \WKL^*_0$.

\begin{proof}
Let $(M, \X) \models \RCA^*_0$ and $A \in \X$ be given.

It is easy to check that the well-known equivalence between $\WKL$
and the $\Sigma^0_1$-separation principle ~\cite[Lemma~IV.4.4]{simpson:sosoa} holds over $\RCA^*_0$.
Moreover, $\Sigma^0_1$-separation clearly implies $\Delta^0_1$-comprehension.
Thus, given $W\subseteq M$,
to prove that the structure $(M, \{ W_k: k\in M\})$ is a model of $\WKL^*_0$ it suffices to show
that it satisfies $\ind \Delta^0_0$,
that $\{ W_k: k\in M\}$ is closed under join,
and that for any $k \in M$ and any two $\Sigma_1(X)$ formulas $\varphi(x,X), \psi(x,X)$
(possibly with first-order parameters from $M$),
if $\varphi(\cdot, W_k)$ and $\psi(\cdot, W_k)$ define disjoint sets,
then there is some $ W_\ell$ separating them.
Given an arbitrary tree from a model $(M,\X) \models \RCA^*_0$
and an arbitrary $W \subseteq M$ that is a path in that tree,
$\ind \Delta_0(W)$ will always hold,
and a fortiori $\{ W_k: k\in M\}$ will satisfy $\ind \Delta^0_0$,
simply because bounded initial segments of $W$ will always have the form $\Ack(s)$ for some $s \in M$.
So, it is enough to verify closure under join and $\Sigma^0_1$-separation.

Let $(\varphi_i(x,X))_{i \in M}$ be an effective listing of all $\Sigma^0_1$ formulas in $M$
with a unique first-order variable $x$ and a unique second-order free variable $X$. (The formulas may involve numerals that represent first-order parameters.)
We take $T$ to be a 0--1 tree describing finite approximations to some $W$ such that:
\begin{enumerate}[(a)]
\item\label{cond:W:0} $ W_0 = A$,
\item\label{cond:W:join} $ W_{\tuple{0,i,j}} =  W_i \oplus  W_j$ for every $i,j$,
\item\label{cond:W:sep} for every $i,j,k\in M$, if the sets defined by $\varphi_i(\cdot, W_k)$ and $\varphi_j(\cdot, W_k)$ are disjoint,
then $ W_{\tuple{1,i,j,k}}$ is a separating set for them.
\end{enumerate}
Formally, to determine whether a finite binary string $\sigma$ belongs to $T$,
we look at the largest $s$ such that $\tuple{i,j} < |\sigma|$ for each $i,j < s$,
so that $\sigma$ can be viewed as containing an $s \times s$ binary-valued matrix.
The conditions \ref{cond:W:0}--\ref{cond:W:sep} are then expressed as requirements concerning this matrix.
For instance, \ref{cond:W:0} is expressed by requiring that $\sigma(\tuple{0,i}) = 1$ iff $i \in A$, for each $i < s$.
We leave \ref{cond:W:join} to the reader.
The condition \ref{cond:W:sep} is expressed by requiring that,
whenever $\tuple{1,i,j,k} < s$ and there is no $x < s$ such that there are witnesses below $s$
for both the formulas
\begin{align*}
\psi_i(x) &\quad\defeq\quad  \varphi_i(x,\{w < s: \sigma(\tuple{k,w}) = 1\}),\\
\psi_j(x) &\quad\defeq\quad  \varphi_j(x,\{w<s: \sigma(\tuple{k,w}) = 1\}),
\end{align*}
then for each $x < s$,
the existence of $y < s$ witnessing $\psi_i(x)$
implies that $\sigma(\tuple{\tuple{1,i,j,k},x}) = 1$,
and
the existence of $y < s$ witnessing $\psi_j(x)$
implies that $\sigma(\tuple{\tuple{1,i,j,k},x}) = 0$.

It is straightforward to verify that $T$ is an infinite $\Delta_1(A)$-definable binary tree.
By our discussion above, if $ W\subseteq M$ is an infinite path in~$T$, 
then $ W$ codes an $\bbomega$-model satisfying $\WKL^*_0$ and containing $A$.
\end{proof}

\begin{remark}
The construction of a single infinite binary tree whose paths correspond to coded models of $\WKL$ has been used earlier,
for instance in the context of comparing variants of Weihrauch reducibility \cite[Proposition 4.9]{hj:reduction-notions}. It can also be applied to give a relatively simple proof of the result of \cite{hajek:interpretability, avigad:formalizing-forcing} that $\WKL_0$ does not have superpolynomial proof speedup over $\RCA_0$ for $\Pi^1_1$ sentences
(see~\cite{avigad:logic-book}, or see~\cite{wong:interpreting-wkl-act} for a very similar argument expressed in terms of the arithmetized completeness theorem). Combined
with the formalized forcing argument described at the end of the present section,
it can also prove the fact, alluded to in \cite{kwy:ramsey-proof-size}, that a similar non-speedup result
holds also for $\WKL^*_0$ over $\RCA^*_0$.
\end{remark}

Note that for any $\mathcal{L}_2$-formula $\varphi(\overline x, \overline X)$, there is a single arithmetical
formula in variables $\overline x, \overline X, W$ that expresses the property ``$W$ codes an $\bbomega$-model containing $\overline X$ and
satisfying $\varphi(\overline x, \overline X)$'' in $\RCA^*_0$.
The number of first-order quantifier alternations in this formula will depend on $\varphi$.

\begin{corollary}\label{cor:analytic-hierarchy-collapses}
Over $\WKL^*_0 + \neg \ind \Sigma^0_1$,
any $\mathcal{L}_2$-formula $\varphi(\overline x, \overline X)$
is provably equivalent both to a $\Sigma^1_1$ formula and to a $\Pi^1_1$ formula.

More specifically, $\WKL^*_0 + \neg \ind \Sigma^0_1$ proves that the following
three statements are equivalent for any $\overline x, \overline X$:
\begin{enumerate}[(i)]
\item $\varphi(\overline x, \overline X)$,
\item ``there exists a coded $\bbomega$-model of $\WKL^*_0 + \neg \ind \Sigma^0_1 + \varphi(\overline x, \overline X)$'',
\item ``every coded $\bbomega$-model of $\WKL^*_0 + \neg \ind \Sigma^0_1$ containing $\overline x, \overline X$
satisfies $\varphi(\overline x, \overline X)$''.
\end{enumerate}
\end{corollary}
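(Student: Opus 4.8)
The plan is to derive Corollary~\ref{cor:analytic-hierarchy-collapses} from Theorem~\ref{thm:wkl-iso} (in the guise of Corollary~\ref{cor:wkl-elem-equiv}) together with Lemma~\ref{lem:wkl-coded}. The key point is that the three statements (i)--(iii) are linked by a chain of implications, of which the nontrivial one, $(i)\Rightarrow(ii)$, is the place where the isomorphism theorem does the work. Since the statement is to be proved \emph{in} $\WKL^*_0 + \neg\ind\Sigma^0_1$, the whole argument should be carried out inside a countable model; but here ``countable'' is not an obstacle, because we may simply pass to a countable $\bbomega$-submodel, or alternatively observe that the chain of implications is witnessed by formal proofs whose correctness can be checked model-theoretically. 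Let me describe the three implications.

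First, $(iii)\Rightarrow(i)$ is immediate provided there \emph{is} a coded $\bbomega$-model of $\WKL^*_0 + \neg\ind\Sigma^0_1$ containing $\overline x,\overline X$. This is where Lemma~\ref{lem:wkl-coded} enters: working in $(M,\X)\models\WKL^*_0+\neg\ind\Sigma^0_1$, fix a $\Sigma^0_1$-definable proper cut and an associated cofinal set $A$ witnessing $\neg\ind\Sigma^0_1$, let $A^* = A \oplus \overline X$, and apply Lemma~\ref{lem:wkl-coded} to obtain $W\in\X$ coding an $\bbomega$-model $(M,\W)$ of $\WKL^*_0$ with $A^* \in \W$; since $A$ is $\Delta_1(A^*)$-definable, the $\Sigma^0_1$-cut is still $\Sigma^0_1$-definable in $(M,\W)$, so $(M,\W)\models\neg\ind\Sigma^0_1$ as well, and it contains $\overline x$ (trivially) and $\overline X$ (since $\overline X \le_\mathrm{T} A^* \in \W$). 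Hence (iii), applied to this particular model, yields $\varphi(\overline x,\overline X)$, i.e.\ (i). The implication $(ii)\Rightarrow(iii)$ follows by the same device combined with an absoluteness observation: given a coded $\bbomega$-model $(M,\W_0)$ witnessing (ii) and an arbitrary coded $\bbomega$-model $(M,\W_1)$ of $\WKL^*_0+\neg\ind\Sigma^0_1$ containing $\overline x,\overline X$, one would like to compare them; but a cleaner route is to note that (ii) and (iii) are both arithmetical statements about $\overline x,\overline X$ and to deduce (iii) from (i) directly, reducing the whole corollary to the cycle $(i)\Rightarrow(ii)$, $(ii)\Rightarrow(i)$, and symmetrically for (iii).

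The heart of the matter is therefore $(i)\Rightarrow(ii)$: assuming $\varphi(\overline x,\overline X)$ holds in the ambient model, we must produce a \emph{coded} $\bbomega$-model of $\WKL^*_0+\neg\ind\Sigma^0_1$ that still satisfies $\varphi(\overline x,\overline X)$. Construct $(M,\W)$ as above via Lemma~\ref{lem:wkl-coded} applied to $A\oplus\overline X$; it is a coded $\bbomega$-model of $\WKL^*_0+\neg\ind\Sigma^0_1$ containing $\overline x,\overline X$, and $(M,\W\cap\X)\supseteq(M,\W)$ — wait, more precisely $\W \subseteq \X$ since $W\in\X$, so $(M,\W\cap\X)=(M,\W)$, which satisfies $\neg\ind\Sigma^0_1$. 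Now apply Corollary~\ref{cor:wkl-elem-equiv} to the pair $(M,\W)\subseteq(M,\X)$, fixing the tuples $\overline x$ and $\overline X$ (legitimate since $\overline X$ consists of elements of $\W\subseteq\W\cap\X$): we get $(M,\W)\preccurlyeq(M,\X)$ for \emph{all} $\mathcal{L}_2$-formulas, in particular for $\varphi$. Since $(M,\X)\models\varphi(\overline x,\overline X)$, elementarity downward gives $(M,\W)\models\varphi(\overline x,\overline X)$, so $W$ codes the desired witness to (ii). The main obstacle, and the only genuinely delicate point, is making sure the coded $\bbomega$-model supplied by Lemma~\ref{lem:wkl-coded} genuinely inherits $\neg\ind\Sigma^0_1$ — which is why one must feed the lemma a set that codes the $\Sigma^0_1$-cut data along with $\overline X$ — and, at the metalevel, checking that the equivalences, being an arithmetical schema, can be certified by a single proof in $\WKL^*_0+\neg\ind\Sigma^0_1$ rather than only holding in each countable model; this follows by the usual completeness-theorem reflection available in $\WKL^*_0$.
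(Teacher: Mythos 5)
Your proposal is correct and follows essentially the same route as the paper: Lemma~\ref{lem:wkl-coded} (applied to the join of $\overline X$ with a witness of $\neg\ind\Sigma^0_1$, so the coded $\bbomega$-model inherits $\neg\ind\Sigma^0_1$) supplies the coded model, and Corollary~\ref{cor:wkl-elem-equiv} transfers $\varphi(\overline x,\overline X)$ between the ambient model and any coded $\bbomega$-submodel, with the standard ``true in all countable models, hence provable'' step at the metalevel. Your write-up is in fact slightly more explicit than the paper's two-line proof about these two points (and about needing Corollary~\ref{cor:wkl-elem-equiv} again to lift $\varphi$ from the coded model back to the ambient one in (iii)$\Rightarrow$(i)), but the substance is identical.
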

\begin{proof}
By Lemma \ref{lem:wkl-coded}, it is provable in $\WKL^*_0 + \neg \ind \Sigma^0_1$ that for every $\overline x, \overline X$  there is a coded $\bbomega$-model of $\WKL^*_0 + \neg \ind \Sigma^0_1$ containing $\overline X$. By Corollary \ref{cor:wkl-elem-equiv}, it is provable in $\WKL^*_0$ that any such model satisfies $\varphi(\overline x, \overline X)$ if and only if $\varphi(\overline x, \overline X)$ holds. This proves both (i)~$\leftrightarrow$~(ii) and (i)~$\leftrightarrow$~(iii).
\end{proof}

Corollary \ref{cor:analytic-hierarchy-collapses} says that in $\WKL^*_0 + \neg \ind \Sigma^0_1$
the analytic hierarchy collapses to $\Delta^1_1$. At the end of this section, we will explain
how this collapse result can be strengthened.

We can use Corollary \ref{cor:analytic-hierarchy-collapses} and other consequences of Theorem \ref{thm:wkl-iso} to give a characterization of $\Pi^1_1$-conservativity over $\RCA^*_0 + \neg \ind \Sigma^0_1$ for $\Pi^1_2$ sentences.
Before that, however, we digress briefly in order to point out that the $\Pi^1_1$ consequences of $\RCA^*_0 + \neg \ind \Sigma^0_1$ are not as pathological a theory as might be supposed.

\begin{proposition}\label{prop:neg-is1-aca0}
The $\Pi^1_1$ consequences of $\RCA^*_0 + \neg \ind \Sigma^0_1$ are contained in those of $\ACA_0$; in particular,
they are true in all $\omega$-models. They are incomparable to the $\Pi^1_1$ consequences of $\RCA_0 + \ind \Sigma^0_n$, for any $n \in \omega$.
\end{proposition}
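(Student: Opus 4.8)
The plan is to reformulate $\Pi^1_1$‑provability in terms of first‑order provability over a fresh relativized predicate. For a theory $T$ with the property that every countable model $(M,Y)$ of its ``first‑order part relativized to $Y$'' expands to a model $(M,\X)\models T$ with $Y\in\X$, a $\Pi^1_1$ sentence $\forall Y\,\theta(Y)$ with $\theta$ arithmetical is a theorem of $T$ exactly when $\theta(Y)$ follows from that relativized first‑order part. For $\ACA_0$ this part is $\PA(Y)$ (every countable model of $\PA(Y)$ expands via arithmetical closure), and for $\RCA_0+\ind\Sigma^0_n$ it is $\ind\Sigma_n(Y)$. The containment in the first assertion then comes down to: if $\RCA^*_0+\neg\ind\Sigma^0_1\vdash\forall Y\,\theta(Y)$ then every model of $\PA(Y)$ satisfies $\theta(Y)$, i.e.\ every $(M,\X)\models\ACA_0$ and every $Y\in\X$ satisfy $\theta(Y)$. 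Fixing such $(M,\X)$ and $Y$ (so $(M,Y)\models\PA(Y)$), I would pass to a countable nonstandard elementary extension $(M^*,Y^*)\succcurlyeq(M,Y)$, expand it to a model $(M^*,\X^*)\models\RCA^*_0+\neg\ind\Sigma^0_1$ with $Y^*\in\X^*$, apply the hypothesis to get $(M^*,Y^*)\models\theta(Y^*)$, and pull this back to $(M,Y)\models\theta(Y)$ by elementarity (arithmetical formulas see only the reduct). The ``in particular'' clause is then immediate: the $\Pi^1_1$ consequences of $\ACA_0$ are simply true, hence hold in every $\omega$-model.

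For the expansion step I would add to $(M^*,Y^*)$ a cofinal increasing sequence $A=\{a_i:i\in\omega\}$ --- available because $M^*$ is countable and nonstandard --- chosen so that $(M^*,Y^*\oplus A)\models\ind\Delta_0(Y^*\oplus A)+\bd\Sigma_1(Y^*\oplus A)+\exp$; then $(M^*,\Delta^0_1\text{-}\mathrm{Def}(M^*,Y^*\oplus A))$ is a model of $\RCA^*_0$ containing $Y^*$, and the $\Sigma^0_1(A)$ formula ``$\exists z\,(A\cap[0,z)$ has more than $x$ elements$)$'' defines the proper cut $\omega$, so $\neg\ind\Sigma^0_1$ holds. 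Here $\ind\Delta_0(Y^*\oplus A)$ is easy, since $A\cap[0,z)$ is a coded finite set for every $z$, so any $\Delta_0(A)$ formula on a bounded domain reduces to a $\Delta_0$ formula of $M^*$ with a parameter; establishing $\bd\Sigma_1(Y^*\oplus A)$ is where a genericity argument for $A$ (or a direct argument exploiting $\bd\Sigma_1$ in $M^*$) is needed.

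For the incomparability, one direction is easy: the scheme $\ind\Sigma^0_1$, packaged as the single $\Pi^1_1$ sentence $\forall e\,\forall Y\,[$ the $\Sigma^0_1(Y)$ formula $e$ obeys induction $]$ via $\mathrm{Sat}_1$, is a theorem of $\RCA_0+\ind\Sigma^0_n$ for every $n\in\omega$ (for $n=0$ this theory is just $\RCA_0$), whereas $\RCA^*_0+\neg\ind\Sigma^0_1$ proves its negation and so cannot prove it. For the other direction I would use that $\RCA^*_0+\neg\ind\Sigma^0_1$ proves, for every standard $k$, the collection scheme $\bd\Sigma^0_k$: given $\forall x<b\,\exists y\,\varphi(x,y)$ with $\varphi$ arithmetical, one approximates $\varphi$ along the $\Sigma^0_1$-definable cofinal enumeration $\{a_i:i\in I\}$ supplied by $\neg\ind\Sigma^0_1$ and uses the Chong--Mourad coding lemma (Theorem~\ref{lem:chong-mourad}) to collect the witnesses, in the ``$\Sigma^0_1$-collection along the cut'' style familiar from the study of models of $\bd\Sigma_n+\neg\ind\Sigma_n$. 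Since $\bd\Sigma^0_{n+1}$, again formalized as a $\Pi^1_1$ sentence through $\mathrm{Sat}_{n+1}$, is not provable in $\RCA_0+\ind\Sigma^0_n$ (which is conservative over $\ind\Sigma_n$, and $\ind\Sigma_n\not\vdash\bd\Sigma_{n+1}$), this $\Pi^1_1$ sentence separates the two theories in the remaining direction.

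The main obstacle is the model construction of the second paragraph together with the collection statement of the third, because the two are two faces of one delicate point: a countable nonstandard model of $\bd\Sigma_1(Y)+\exp+\ind\Delta_0(Y)$ expands to a model of $\RCA^*_0+\neg\ind\Sigma^0_1$ retaining $Y$ essentially to the extent that it already satisfies enough additional collection, so I must carry out the expansion over models of $\PA(Y)$ (where it is available) while simultaneously confirming that the $\Sigma^0_1$-cut machinery genuinely yields $\bd\Sigma^0_k$ for all standard $k$ over $\RCA^*_0+\neg\ind\Sigma^0_1$. By comparison, the elementary-extension reduction, the manipulations with $\mathrm{Sat}_n$, and the $\omega$-model corollary are routine.
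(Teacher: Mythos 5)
Your containment argument and your model construction both have real problems, and one of your two separation sentences is simply wrong. Take the incomparability direction first: you claim that $\RCA^*_0 + \neg\ind\Sigma^0_1$ proves $\bd\Sigma^0_k$ for every standard $k$ and propose to use $\bd\Sigma^0_{n+1}$ as the $\Pi^1_1$ sentence separating it from $\RCA_0 + \ind\Sigma^0_n$. This cannot work: by the (relativized) Kirby--Paris hierarchy, $\bd\Sigma^0_2$ already implies $\ind\Sigma^0_1$ over $\RCA^*_0$, so $\RCA^*_0 + \neg\ind\Sigma^0_1$ proves the \emph{negation} of $\bd\Sigma^0_2$; your ``collection along the $\Sigma^0_1$-cut via Chong--Mourad'' sketch has to break down (the approximated witnesses need not stabilize), since otherwise the theory would be inconsistent. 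This is exactly why the paper uses the strictly weaker cardinality scheme $\mathrm{C}\Sigma^0_{n+1}$ at this point: by Theorem~\ref{thm:neg-is1-cs2} (a nontrivial result of \cite{kky:ramsey-rca0star}, not a quick coding argument) it is provable from $\RCA^*_0 + \bd\Sigma^0_n + \neg\ind\Sigma^0_n$, it does not imply $\ind\Sigma^0_1$, and it is still unprovable in $\RCA_0 + \ind\Sigma^0_n$. Without some such replacement your second separation direction has no proof.

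The containment argument also rests on an unproved and, as stated, unobtainable lemma: that an \emph{arbitrary} countable nonstandard $(M^*,Y^*)\models\PA(Y)$ admits a cofinal set $A$ of order type $\omega$ with $(M^*,Y^*\oplus A)\models\bd\Sigma_1(Y^*\oplus A)$. You flag this as the place where ``a genericity argument is needed'', but this is not a routine density argument: it is essentially the $\omega$-property studied by Kossak \cite{kossak:omega}, a genuinely restrictive property of models whose known constructions require saturation hypotheses (the paper's proof is explicitly adapted from the proof of Proposition~2.2 there). The obstruction is that a $\Sigma_1(Y^*\oplus A)$-collection instance can have arbitrary parameters from $M^*$, and in a general model you cannot choose $a_{k+1}$ dominating everything relevant (the definable closure of a bounded set need not be bounded); so simply ``choosing $A$ fast-growing'' or ``generic'' does not yield collection over the whole model. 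The paper avoids expanding the full model altogether: it takes a countable \emph{recursively saturated} model of $\ACA_0$ plus the given $\Sigma^1_1$ statement, uses recursive saturation to get a \emph{coded} increasing sequence each of whose terms dominates all elements definable from smaller parameters, and then passes to the cut $I$ generated by the standard part of that sequence; collection holds for $(I, A_I\oplus C)$ because $A_I\oplus C$ is the trace on $I$ of a set definable in the big model, while $(I,A_I)\preccurlyeq(M,A)$ (via definable Skolem functions) preserves the arithmetical witness. Your route preserves the arithmetical statement for free by keeping the universe fixed, but the price is precisely the hard step you have not carried out, and to even begin you would at least have to choose the elementary extension with appropriate saturation, which your proposal does not do.
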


\begin{proof}
As discussed in Section \ref{sec:towsner}, the principle $\mathrm{C}\Sigma^0_{n+1}$ is a $\Pi^1_1$ statement provable in $\RCA^*_0 + \neg \ind \Sigma^0_1$ but not in $\RCA_0 + \ind \Sigma^0_n$. Of course, $\mathrm{Con}(\ind \Delta_0 + \exp)$ or the totality of the iterated exponential function are examples of $\Pi^1_1$ statements provable in $\RCA_0$ but not in $\RCA^*_0 + \neg \ind \Sigma^0_1$.

It remains to prove that the $\Pi^1_1$ consequences of $\RCA^*_0 + \neg \ind \Sigma^0_1$ are contained in those of $\ACA_0$, or by contraposition, that any $\Sigma^1_1$ statement consistent with $\ACA_0$ is consistent with
$\RCA^*_0 + \neg \ind \Sigma^0_1$. This is implicit in the proof of \cite[Proposition 2.2]{kossak:omega}.
We spell out the argument. Let $(M,\X)$ be a countable recursively saturated model of $\ACA_0 + \exists X \alpha(X)$, where $\alpha$ is arithmetical, and let $A \in \X$ be such that $(M, A) \models \alpha(A)$. The one-sorted structure $(M,A)$ is a countable recursively saturated model of $\PA(A)$.

By recursive saturation, for every $a \in M$ there exists $b \in M$ that is above any element definable in $(M, A)$ with parameters below $a$. By another application of recursive saturation, there exists a sequence $\tuple{a_k}_{k < c}$ for $c$ nonstandard such that each $a_k$ is above any element definable in $(M, A)$ with parameters below $a_{k-1}$. Let $I$ be the initial segment of $M$ generated by $C = \{a_n: n \in \omega\}$, and let $A_I = A \cap I$. Note that $I$ is obviously closed under $\exp$ and that $A_I \oplus C$ is the intersection with $I$ of a definable set, so the structure $(I, A_I \oplus C)$ satisfies $\bd \Sigma^0_1$. Of course, it does not satisfy $\ind \Sigma^0_1$, because $\omega$ is $\Sigma_1(C)$-definable.

Since $I$ is closed under definability in $(M,A)$ and $\PA(A)$ has definable Skolem functions, $(I,A_I)$ is an elementary substructure of $(M,A)$. Thus, $(I, A_I) \models \alpha(A_I)$. Putting things together, we conclude that $(I, \Delta^0_1\textrm{-}\mathrm{Def}(I, A_I \oplus C))$ is a model of
$\RCA^*_0 + \neg \ind \Sigma^0_1 + \exists X \alpha(X)$.
\end{proof}


\begin{lemma}\label{lem:criterion-pi11-cons-over-not-is1}
Let $\psi$ be a $\Pi^1_2$ sentence.
Then $\psi$ is $\Pi^1_1$-conservative over $\RCA^*_0 +  {\neg \ind \Sigma^0_1}$  if and only if
$\RCA^*_0$ proves the $\Pi^1_1$ sentence
``every coded $\bbomega$-model of $\WKL^*_0 + \neg \ind \Sigma^0_1$ satisfies $\psi$''.
\end{lemma}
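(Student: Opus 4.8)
The plan is to prove both implications directly, using the tools assembled above — principally Corollary~\ref{cor:analytic-hierarchy-collapses}, Lemma~\ref{lem:wkl-coded}, and the Simpson--Smith $\bbomega$-extension result.

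\textbf{From right to left.} Suppose $\RCA^*_0$ proves that every coded $\bbomega$-model of $\WKL^*_0 + \neg \ind \Sigma^0_1$ satisfies $\psi$. To show $\psi$ is $\Pi^1_1$-conservative over $\RCA^*_0 + \neg \ind \Sigma^0_1$, take any $\Pi^1_1$ sentence $\theta$ with $\RCA^*_0 + \neg \ind \Sigma^0_1 + \psi \vdash \theta$; I want $\RCA^*_0 + \neg \ind \Sigma^0_1 \vdash \theta$. Arguing semantically, let $(M,\X) \models \RCA^*_0 + \neg \ind \Sigma^0_1$; we may take it countable. By Simpson--Smith \cite{simpson-smith} there is an $\bbomega$-extension $(M,\Y) \models \WKL^*_0$, and since $\X \subseteq \Y$ we still have $\neg \ind \Sigma^0_1$ in $(M,\Y)$. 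By Lemma~\ref{lem:wkl-coded} (and the fact that $\neg \ind \Sigma^0_1$ is witnessed by a $\Sigma^0_1$-cut, which survives into any coded $\bbomega$-submodel built to contain a witnessing set), $(M,\Y)$ has a coded $\bbomega$-submodel $(M,\W) \models \WKL^*_0 + \neg \ind \Sigma^0_1$. By the hypothesis, applied inside $(M,\Y)$ (which models $\RCA^*_0$), this coded $\bbomega$-model satisfies $\psi$; hence $(M,\W) \models \WKL^*_0 + \neg \ind \Sigma^0_1 + \psi$, so $(M,\W) \models \theta$. But $\theta$ is $\Pi^1_1$, so its truth transfers downward from $(M,\W)$ to $(M,\X)$ (a $\Pi^1_1$ sentence true in a model with more sets is true in any $\bbomega$-submodel containing the relevant parameters — and $\theta$ is a sentence, so this is immediate). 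Thus $(M,\X) \models \theta$, as required. One subtlety to check: the coded $\bbomega$-model $(M,\W)$ must genuinely satisfy $\neg \ind \Sigma^0_1$; this is why one builds $T$ (or rather uses Lemma~\ref{lem:wkl-coded} on the set $A$ witnessing the $\Sigma^0_1$-cut together with the enumeration $\{\tuple{i,a_i}\}$) so that the witnessing set lands in $\W$, making the same $\Sigma^0_1$ formula define a proper cut there.

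\textbf{From left to right.} Suppose $\psi$ is $\Pi^1_1$-conservative over $\RCA^*_0 + \neg \ind \Sigma^0_1$. The sentence $\chi :=$ ``every coded $\bbomega$-model of $\WKL^*_0 + \neg \ind \Sigma^0_1$ satisfies $\psi$'' is (by the remark preceding Corollary~\ref{cor:analytic-hierarchy-collapses} about $\bbomega$-model satisfaction being arithmetical, together with $\psi$ being $\Pi^1_2$) a $\Pi^1_1$ sentence — a universal quantifier over codes $W$ followed by an arithmetical condition, with the $\Pi^1_2$ body $\psi$ relativized to the coded model contributing only first-order quantifiers. I must show $\RCA^*_0 \vdash \chi$. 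First, I claim $\WKL^*_0 + \neg \ind \Sigma^0_1 \vdash \psi$: indeed $\WKL^*_0 + \neg \ind \Sigma^0_1$ proves $\psi$ iff it proves $\psi$, and by $\Pi^1_1$-conservativity of $\psi$ over $\RCA^*_0 + \neg \ind \Sigma^0_1$ applied to the $\Pi^1_1$ consequences of $\psi$ one gets — wait, more directly: by Corollary~\ref{cor:analytic-hierarchy-collapses}, over $\WKL^*_0 + \neg \ind \Sigma^0_1$ the sentence $\psi$ is equivalent to a $\Pi^1_1$ sentence $\psi^-$; if $\WKL^*_0 + \neg \ind \Sigma^0_1 \nvdash \psi$ then $\WKL^*_0 + \neg \ind \Sigma^0_1 \nvdash \psi^-$, but by Simpson--Smith $\WKL^*_0$ is $\Pi^1_1$-conservative over $\RCA^*_0$, so $\WKL^*_0 + \neg \ind \Sigma^0_1$ is $\Pi^1_1$-conservative over $\RCA^*_0 + \neg \ind \Sigma^0_1$, whence $\RCA^*_0 + \neg \ind \Sigma^0_1 \nvdash \psi^-$. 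Now $\Pi^1_1$-conservativity of $\psi$ over $\RCA^*_0 + \neg \ind \Sigma^0_1$ gives us, since $\RCA^*_0 + \neg\ind\Sigma^0_1 + \psi \vdash \psi^-$, that $\RCA^*_0 + \neg\ind\Sigma^0_1 \vdash \psi^-$ — contradiction. So $\WKL^*_0 + \neg \ind \Sigma^0_1 \vdash \psi$. Now work inside $\RCA^*_0$: given a code $W$ of an $\bbomega$-model $(\N,\W)$ of $\WKL^*_0 + \neg \ind \Sigma^0_1$, arithmetized soundness of first-order provability (available in $\RCA^*_0$) applied to the proof $\WKL^*_0 + \neg \ind \Sigma^0_1 \vdash \psi$ shows $(\N,\W) \models \psi$. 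Hence $\RCA^*_0 \vdash \chi$.

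\textbf{Main obstacle.} The delicate point is the first-to-second direction's internal step: one needs $\RCA^*_0$ to verify that a coded $\bbomega$-model of a finitely axiomatized (or recursively axiomatized) theory $\WKL^*_0 + \neg \ind \Sigma^0_1$ is closed under the relevant logical consequences — i.e.\ one needs the arithmetized completeness/soundness machinery to go through in $\RCA^*_0$ for the particular statement $\psi$. Since $\psi$ is a fixed $\Pi^1_2$ sentence and satisfaction in a coded $\bbomega$-model is given by a fixed arithmetical formula, what is needed is a $\Delta_0$-induction-level argument tracing a fixed proof, which $\RCA^*_0$ supports; but care is required because $\neg \ind \Sigma^0_1$ is not finitely axiomatizable over $\RCA^*_0$ and $\WKL^*_0$ involves the schematic $\WKL$ — one resolves this by noting that ``$(\N,\W) \models \WKL^*_0 + \neg\ind\Sigma^0_1$'' as hypothesized already encodes the relevant schema instances, and the proof of $\psi$ from $\WKL^*_0 + \neg\ind\Sigma^0_1$ uses only finitely many such instances, so only finitely much of the hypothesis is invoked. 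The left-to-right direction's subtlety that $\psi$ need not itself be $\Pi^1_1$ is precisely what forces the detour through Corollary~\ref{cor:analytic-hierarchy-collapses}.
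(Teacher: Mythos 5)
There are two genuine gaps, one in each direction.

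In your left-to-right argument, everything rests on the claim that $\RCA^*_0 + \neg\ind\Sigma^0_1 + \psi \vdash \psi^-$, where $\psi^-$ is the $\Pi^1_1$ sentence obtained from Corollary~\ref{cor:analytic-hierarchy-collapses}. But that corollary gives the equivalence of $\psi$ with $\psi^-$ only over $\WKL^*_0 + \neg\ind\Sigma^0_1$: without $\WKL$, a $\Pi^1_2$ sentence true in $(M,\X)$ need not transfer to coded $\bbomega$-submodels (the isomorphism/elementarity results require $\WKL$ on both sides), so there is no reason why $\psi$ should imply $\psi^-$ over $\RCA^*_0 + \neg\ind\Sigma^0_1$ alone. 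What you do get is $\WKL^*_0 + \neg\ind\Sigma^0_1 + \psi \vdash \psi^-$, and to strip away \emph{both} $\WKL$ and $\psi$ you need that $\WKL \land \psi$ is \emph{jointly} $\Pi^1_1$-conservative over $\RCA^*_0 + \neg\ind\Sigma^0_1$; this is exactly the ingredient the paper supplies, via Simpson--Smith together with a union-of-chains amalgamation argument (which is where the hypothesis that $\psi$ is $\Pi^1_2$ is used). Note also that the intermediate claim you aim for, $\WKL^*_0 + \neg\ind\Sigma^0_1 \vdash \psi$, is precisely the hard direction of Theorem~\ref{thm:pi11-cons-over-not-is1}, which the paper deduces \emph{from} the present lemma, so it cannot be taken as a stepping stone without an independent proof. (Your final move, formalized soundness of the fixed derivation inside $\RCA^*_0$, is workable in principle but is also avoidable: once one knows $\RCA^*_0 + \neg\ind\Sigma^0_1$ proves the coded-model statement, the case $\ind\Sigma^0_1$ is vacuous, since then no coded $\bbomega$-model of $\neg\ind\Sigma^0_1$ exists.)

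In your right-to-left argument, the step ``$\theta$ is $\Pi^1_1$, so its truth transfers downward from $(M,\W)$ to $(M,\X)$'' is invalid: $(M,\X)$ is \emph{not} an $\bbomega$-submodel of the coded model $(M,\W)$. The family coded by $W$ contains the designated set $A$ and is closed under join and separation, but it has no reason to contain the sets of $\X$, so truth of a universal second-order statement in $(M,\W)$ says nothing about the sets of $\X$. The repair is close to what the paper does: argue contrapositively with the $\Sigma^1_1$ negation $\xi$ of $\theta$, carrying a witness of $\xi$ from $(M,\X)$ into the coded model (the paper routes this through Corollary~\ref{cor:analytic-hierarchy-collapses}); equivalently, for each $B \in \X$ separately apply Lemma~\ref{lem:wkl-coded} to $A \oplus B$ so that $B$ lands in the coded model, conclude that the coded model satisfies $\theta$, and read off the arithmetical matrix of $\theta$ at $B$. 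As written, the single coded model you construct does not suffice.
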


\begin{proof}
If $\psi$ is a $\Pi^1_2$ sentence which is $\Pi^1_1$-conservative over $\RCA^*_0 +  {\neg \ind \Sigma^0_1}$,
then, by \cite{simpson-smith} and a routine union of chains argument (cf.~\cite{yokoyama:conservativity}),
$\WKL \land \psi$ is also $\Pi^1_1$-conservative over $\RCA^*_0 +  {\neg \ind \Sigma^0_1}$.
By Corollary \ref{cor:analytic-hierarchy-collapses}, $\WKL^*_0 + \psi + {\neg \ind \Sigma^0_1}$
proves that every coded $\bbomega$-model of $\WKL^*_0 + \neg \ind \Sigma^0_1$ satisfies $\psi$.
So, by $\Pi^1_1$-conservativity, also $\RCA^*_0 +  {\neg \ind \Sigma^0_1}$ proves this statement, which is then
obviously provable in $\RCA^*_0$ since $\RCA^*_0 +  {\ind \Sigma^0_1}$ rules out the existence
of coded $\bbomega$-models of $\neg \ind \Sigma^0_1$.

On the other hand, if there is a $\Sigma^1_1$ sentence $\xi$ consistent with $\RCA^*_0 +  {\neg \ind \Sigma^0_1}$
but not with $\RCA^*_0 +  {\neg \ind \Sigma^0_1}+\psi$, then by \cite{simpson-smith} there is a model of $\WKL^*_0 + \neg \ind \Sigma^0_1 + \xi$.
Clearly, this model must also satisfy $\neg \psi$, so by Corollary \ref{cor:analytic-hierarchy-collapses}
it contains a coded $\bbomega$-submodel satisfying $\WKL^*_0 + \neg \ind \Sigma^0_1 + \neg\psi$.
\end{proof}

\begin{remark}
Even though Corollary \ref{cor:analytic-hierarchy-collapses} applies to arbitrary $\mathcal{L}_2$-formulas $\varphi$,
the proof of Lemma \ref{lem:criterion-pi11-cons-over-not-is1} employs a union of chains argument that only works for $\Pi^1_2$ statements. In fact, Lemma \ref{lem:criterion-pi11-cons-over-not-is1} cannot be generalized to $\Sigma^1_2$ sentences $\psi$, as witnessed by $\psi \defeq \neg \WKL$~\cite[Proposition~11]{ky:categorical}.
\end{remark}

Lemma \ref{lem:criterion-pi11-cons-over-not-is1} already implies that the set of $\Pi^1_2$ sentences that are $\Pi^1_1$-conservative over $\RCA^*_0 \leszek{+ \neg \ind \Sigma^0_1}$ is \leszek{c.e.}, 
and thus \leszek{it is} a computably axiomatized theory. The next result states that in fact this theory  has a rather simple axiomatization.

\begin{theorem}\label{thm:pi11-cons-over-not-is1}
Let $\psi$ be a $\Pi^1_2$ sentence. Then $\psi$ is $\Pi^1_1$-conservative over $\RCA^*_0 +  {\neg \ind \Sigma^0_1}$  if and only if $\WKL^*_0 + \neg \ind \Sigma^0_1 \vdash \psi$.
\end{theorem}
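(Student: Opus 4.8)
The backward direction is immediate: if $\WKL^*_0 + \neg \ind \Sigma^0_1 \vdash \psi$, then since $\WKL$ is a $\Pi^1_2$ sentence which is $\Pi^1_1$-conservative over $\RCA^*_0 + \neg \ind \Sigma^0_1$ (by Simpson--Smith~\cite{simpson-smith} together with the standard union-of-chains argument), any $\Pi^1_1$ consequence of $\RCA^*_0 + \neg \ind \Sigma^0_1 + \psi$ is already a $\Pi^1_1$ consequence of $\RCA^*_0 + \neg \ind \Sigma^0_1$. So the content is entirely in the forward direction.

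For the forward direction, assume $\psi$ is $\Pi^1_1$-conservative over $\RCA^*_0 + \neg \ind \Sigma^0_1$; I want to show $\WKL^*_0 + \neg \ind \Sigma^0_1 \vdash \psi$. By Lemma~\ref{lem:criterion-pi11-cons-over-not-is1}, the hypothesis gives that $\RCA^*_0$ proves the $\Pi^1_1$ sentence ``every coded $\bbomega$-model of $\WKL^*_0 + \neg \ind \Sigma^0_1$ satisfies $\psi$''. I now argue inside an arbitrary model $(M,\X) \models \WKL^*_0 + \neg \ind \Sigma^0_1$. By Lemma~\ref{lem:wkl-coded}, $(M,\X)$ contains a set $W$ coding an $\bbomega$-model $(M,\W)$ of $\WKL^*_0$; moreover, since $\neg \ind \Sigma^0_1$ holds in $(M,\X)$, there is a $\Sigma^0_1$-definable proper cut in $(M,\X)$, and (using that $\W$ can be arranged to contain any prescribed set, e.g.\ an unbounded set of order type that cut) one can ensure $(M,\W) \models \neg \ind \Sigma^0_1$ as well. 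Thus $(M,\W)$ is a coded $\bbomega$-model of $\WKL^*_0 + \neg \ind \Sigma^0_1$ inside $(M,\X)$; applying the $\Pi^1_1$ sentence that $\RCA^*_0$ proves, we get $(M,\W) \models \psi$.

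It remains to transfer $\psi$ up from $(M,\W)$ to $(M,\X)$. Here is where Theorem~\ref{thm:wkl-iso} (or more precisely Corollary~\ref{cor:wkl-elem-equiv}) does the work: both $(M,\W)$ and $(M,\X)$ are models of $\WKL^*_0$ on the same first-order universe $M$, and $(M, \W \cap \X) = (M,\W) \models \neg \ind \Sigma^0_1$. Since $\W \subseteq \X$, Corollary~\ref{cor:wkl-elem-equiv} yields $(M,\W) \preccurlyeq (M,\X)$ with elementarity for \emph{all} $\mathcal{L}_2$-formulas. As $\psi$ is a sentence true in $(M,\W)$, it is true in $(M,\X)$. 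Since $(M,\X)$ was an arbitrary model of $\WKL^*_0 + \neg \ind \Sigma^0_1$, we conclude $\WKL^*_0 + \neg \ind \Sigma^0_1 \vdash \psi$.

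The only genuinely delicate point is arranging that the coded $\bbomega$-model $(M,\W)$ supplied by Lemma~\ref{lem:wkl-coded} can be taken to satisfy $\neg \ind \Sigma^0_1$; but this is routine, since $\neg \ind \Sigma^0_1$ is equivalent to the existence of an unbounded set whose enumeration in increasing order is indexed by a proper cut (such a set exists in $\X$ by the remarks in Section~\ref{sec:prelim}), and Lemma~\ref{lem:wkl-coded} lets us put any chosen set $A \in \X$ into $\W$ as $W_0$, whence $(M,\W)$ inherits the failure of $\Sigma^0_1$-induction witnessed by that set. Alternatively, the entire argument can be run at the level of Lemma~\ref{lem:criterion-pi11-cons-over-not-is1} together with Corollary~\ref{cor:analytic-hierarchy-collapses}: the hypothesis is that $\RCA^*_0 \vdash$ ``every coded $\bbomega$-model of $\WKL^*_0 + \neg \ind \Sigma^0_1$ satisfies $\psi$'', this is a theorem of $\WKL^*_0 + \neg \ind \Sigma^0_1$ a fortiori, and by Corollary~\ref{cor:analytic-hierarchy-collapses} the statement ``every coded $\bbomega$-model of $\WKL^*_0 + \neg \ind \Sigma^0_1$ satisfies $\psi$'' is provably equivalent over $\WKL^*_0 + \neg \ind \Sigma^0_1$ to $\psi$ itself, which is exactly the desired conclusion.
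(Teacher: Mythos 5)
Your proof is correct and follows essentially the paper's own route: Simpson--Smith for the easy direction, and Lemma~\ref{lem:criterion-pi11-cons-over-not-is1} plus the collapse machinery for the converse, where your main argument simply unfolds the proof of Corollary~\ref{cor:analytic-hierarchy-collapses} via Lemma~\ref{lem:wkl-coded} and Corollary~\ref{cor:wkl-elem-equiv}, and the alternative you mention at the end is exactly the paper's proof.
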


\begin{proof}
Of course, if the $\Pi^1_2$ sentence $\psi$ follows from $\WKL^*_0 + \neg\ind\Sigma^0_1$,
then it is $\Pi^1_1$-conservative over $\RCA^*_0 + \neg\ind\Sigma^0_1$, by \cite{simpson-smith}.

On the other hand, if $\psi$ is $\Pi^1_2$ and $\Pi^1_1$-conservative over $\RCA^*_0 + \neg \ind\Sigma^0_1$,
then, by Lemma \ref{lem:criterion-pi11-cons-over-not-is1}, $\RCA^*_0$ proves that
every coded $\bbomega$-model of $\WKL^*_0 + \neg \ind \Sigma^0_1$ satisfies $\psi$.
But then Corollary \ref{cor:analytic-hierarchy-collapses} implies that $\WKL^*_0 + \neg\ind\Sigma^0_1$ proves $\psi$.
\end{proof}

\begin{remark}
Theorem \ref{thm:pi11-cons-over-not-is1} means in particular that,
in contrast to the tree forcing used to add paths through binary trees to a model,
any forcing notion that can be used to obtain witnesses to statements that
do not follow from $\WKL + \neg \ind \Sigma^0_1$ will not in general preserve $\bd \Sigma^0_1$ when applied
to a ground model satisfying $\bd \Sigma^0_1 + {\exp} + {\neg \ind \Sigma^0_1}$.
Note that unprovability from $\WKL + \neg \ind \Sigma^0_1$ is known e.g.~for relatively weak
Ramsey-theoretic such as the cohesive Ramsey's Theorem $\mathrm{CRT}^2_2$ (cf.~\cite{fkk:weak-cousins}),
which is very easy to witness using either Cohen or Mathias forcing.

Similar observations specifically about Cohen forcing, but in the context of models of $\neg {\exp}$, were made in \cite{fernandes:baire-feasible} and \cite{fff:analysis-weak}. Other results showing that it can be impossible
to preserve collection while forcing include \cite[Theorem 1]{Simpson-BCT-in-RCAs}, \cite[Theorem 4.3]{HSS-AMT}.
\end{remark}

An important feature of $\WKL$ is the low basis theorem \cite{jockusch-soare:low-basis}: if $T$ is any infinite 0--1 tree,
then $T$ has an infinite path $W$ which is \emph{low} in $T$, i.e.~every $\Delta_2(T \oplus W)$-definable
set is $\Delta_2(T)$-definable. It is known that the low basis theorem holds provably in $\RCA_0$,
in the sense that if $(M, \Delta^{0}_1\textrm{-}\mathrm{Def}(M,T)) \models \RCA_0 \land \textrm{``}T \textrm{ is an infinite 0--1 tree''}$, then there is some $W \subseteq M$ low in $T$ such that $W$ is an infinite path in $T$ and
$(M, \Delta^{0}_1\textrm{-}\mathrm{Def}(M,T\oplus W))$ still satisfies $\RCA_0$ (\cite{hajek-kucera:recursion-is1}, see \cite[Chapter I.3(b)]{hajek-pudlak}).

Chong and Yang \cite{chong-yang:jump-cut} showed that some important properties of low sets fail in the absence of $\Sigma^0_1$-induction: for example, over $\RCA^*_0 + \neg \ind \Sigma_1$ there is no non-computable low $\Sigma_1$-set.
Here, we prove that the low basis theorem fails in $\RCA^*_0$ in a strong way:
in general, a computable tree in a model of $\RCA^*_0$ will not even have an arithmetical path.

\begin{lemma}\label{lem:w-not-in-w}
Let $(M, \X) \models \RCA^*_0$, and let $W \in \X$ code an $\bbomega$-model $(M, \W) \models \RCA^*_0$.
Then $W \notin \W$.
\end{lemma}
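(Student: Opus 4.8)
The plan is a direct diagonalization, carried out by contradiction. So suppose, toward a contradiction, that $W \in \W$; the goal is to exhibit a set in $\W$ that is not equal to any of the slices $W_k$.

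First I would form the diagonal set $D = \{k \in M : \tuple{k,k} \notin W\}$. This is defined by a $\Delta^0_0$ formula (indeed a quantifier-free one once $W$ is read as a parameter) whose only parameter is the set $W$, so it is in particular $\Delta^0_1$ with parameter $W$. Since $(M, \W) \models \RCA^*_0$ and, by the for-contradiction hypothesis, $W \in \W$, the $\Delta^0_1$-comprehension axiom of $(M,\W)$ applies and yields $D \in \W$. Next, since $W$ codes the family $\W = \{W_k : k \in M\}$, there is some $e \in M$ with $D = W_e$. Now I would simply unwind the definitions for this particular $e$: by the definition of $D$ we have $e \in D$ iff $\tuple{e,e} \notin W$, and by the definition of $W_e$ we have $\tuple{e,e} \in W$ iff $e \in W_e$, so $e \in D$ iff $e \notin W_e$; on the other hand $D = W_e$ gives $e \in D$ iff $e \in W_e$. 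Hence $e \in W_e$ iff $e \notin W_e$, a contradiction. This proves $W \notin \W$.

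I do not expect any real obstacle here: the argument is just the arithmetical counterpart of the elementary fact that a set $W$ cannot uniformly enumerate a collection of sets --- such as a Turing ideal, or the second-order part of an $\bbomega$-model of $\RCA^*_0$ --- to which $W$ itself belongs. The only point worth flagging is that the comprehension instance being invoked is the one belonging to $(M,\W)$, not to the ambient model $(M,\X)$; this is precisely what the assumption $W \in \W$ makes available, and it is also the only place where the full strength of ``$(M,\W) \models \RCA^*_0$'' is used (closure of $\W$ under join alone, say, would not suffice).
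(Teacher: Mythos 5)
Your proof is correct and is exactly the ``standard diagonalization argument'' that the paper's proof invokes without spelling out: assume $W\in\W$, use $\Delta^0_1$-comprehension in $(M,\W)$ with parameter $W$ to get the diagonal set $D=\{k:\tuple{k,k}\notin W\}\in\W$, and derive the contradiction $e\in W_e\leftrightarrow e\notin W_e$ from $D=W_e$. No issues; your remark that the comprehension instance used is the one of $(M,\W)$ rather than $(M,\X)$ is the right point to flag.
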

\begin{proof}
If $W$ codes $\W$, then the sequence $\tuple{W_k}_{k \in M}$ contains all sets that belong to $\W$.
By a standard diagonalization argument, such a sequence cannot itself belong to $\W$.
\end{proof}

\begin{lemma} \label{lem:non-arith}
Let $(M,\X) \models \WKL^*_0 + \neg \ind \Sigma^0_1$. Then for every $A$ in $\X$, there is a set $B \in \X$
which is not arithmetically definable in $A$.
\end{lemma}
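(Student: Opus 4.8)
The plan is to take $B$ to be (a code of) a coded $\bbomega$-submodel of $(M,\X)$ that satisfies $\WKL^*_0$ and contains $A$, and to combine the elementarity given by Corollary~\ref{cor:wkl-elem-equiv} with the diagonal fact of Lemma~\ref{lem:w-not-in-w} to show that no such code can be arithmetical in $A$.

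First I would fix a set $D \in \X$ such that some proper cut of $M$ is $\Sigma_1(D)$-definable in $(M,\X)$; such $D$ exists because $(M,\X) \models \neg\ind\Sigma^0_1$. Applying Lemma~\ref{lem:wkl-coded} to $A \oplus D$ in place of $A$ yields a set $W \in \X$ coding an $\bbomega$-submodel $(M,\W)$ of $(M,\X)$ with $(M,\W) \models \WKL^*_0$ and $A \oplus D \in \W$. Since $(M,\W) \models \RCA^*_0$ and hence $\W$ is closed downwards under $\le_\mathrm{T}$, both $A$ and $D$ lie in $\W$; and because $D \in \W$, the same $\Sigma_1(D)$ formula defines a proper cut of $M$ inside $(M,\W)$, so $(M,\W) \models \neg\ind\Sigma^0_1$. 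Now $\W \subseteq \X$, both $(M,\W)$ and $(M,\X)$ model $\WKL^*_0$, and $(M,\W \cap \X) = (M,\W) \models \neg\ind\Sigma^0_1$, so Corollary~\ref{cor:wkl-elem-equiv} gives $(M,\W) \preccurlyeq (M,\X)$, with elementarity for all $\mathcal{L}_2$-formulas.

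I would then set $B := W \in \X$. In fact I would show that $W$ is not definable in $(M,\X)$ by any $\mathcal{L}_2$-formula with parameters from $\W$; in particular, since $A \in \W$, it is not arithmetically definable in $A$, which is what we want. Indeed, if $W = \{k \in M : (M,\X) \models \theta(k)\}$ for such a formula $\theta$ (with parameters from $\W$ and $M$), then $(M,\X) \models \exists Y\,\forall k\,(k \in Y \leftrightarrow \theta(k))$, so by elementarity there is $B' \in \W$ with $(M,\W) \models \forall k\,(k \in B' \leftrightarrow \theta(k))$; applying $(M,\W) \preccurlyeq (M,\X)$ to this last statement shows that $B'$ and $W$ are defined by the same $\theta$ in $(M,\X)$, hence $B' = W$ as elements of $\X$. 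But $B' \in \W$, whereas $W \notin \W$ by Lemma~\ref{lem:w-not-in-w} (which applies since $(M,\W) \models \RCA^*_0$), a contradiction.

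I do not expect a serious obstacle. The one point that calls for care is arranging that the coded model $(M,\W)$ itself satisfies $\neg\ind\Sigma^0_1$, since this is needed in order to invoke Corollary~\ref{cor:wkl-elem-equiv}; that is precisely why a witness $D$ for $\neg\ind\Sigma^0_1$ is placed into $\W$ alongside $A$. (If the absence of a countability hypothesis in Corollary~\ref{cor:wkl-elem-equiv} is a concern, one may assume $M$ countable without loss of generality for the purposes of this lemma, or replace $(M,\X)$ by a countable elementary substructure containing $A$, $D$, and $W$.)
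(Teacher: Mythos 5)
Your proof is correct and follows essentially the same route as the paper's: take $B$ to be the code $W$ of a coded $\bbomega$-model of $\WKL^*_0+\neg\ind\Sigma^0_1$ containing $A$ from Lemma~\ref{lem:wkl-coded}, and derive a contradiction from assumed definability via Lemma~\ref{lem:w-not-in-w} and Corollary~\ref{cor:wkl-elem-equiv}. Your explicit step of joining a witness $D$ of $\neg\ind\Sigma^0_1$ to $A$ so that the coded model itself satisfies $\neg\ind\Sigma^0_1$ is exactly the right way to justify a point the paper's proof states without comment, and your double use of elementarity (rather than the paper's appeal to absoluteness of arithmetical formulas with parameter $A$) even yields the slightly stronger conclusion that $W$ is not definable by any $\mathcal{L}_2$-formula with parameters from $\W$.
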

\begin{proof}
Suppose otherwise. By Lemma \ref{lem:wkl-coded} there exists $W \in \X$ coding an $\bbomega$-model $\W$ of $\WKL^*_0 + \neg \ind \Sigma^0_1$ such that $A \in \W$. Assume that there is an arithmetical formula $\varphi(x,A)$, with no set parameters other than $A$, which defines $W$ in $(M, A)$. Then $(M, \X) \models \exists X\, \forall x \,(x \in X \leftrightarrow \varphi(x,A))$.
On the other hand, Lemma \ref{lem:w-not-in-w} implies that $W \notin \W$, so $(M, \W) \not \models \exists X\, \forall x \,(x \in X \leftrightarrow \varphi(x,A))$. This contradicts Corollary \ref{cor:wkl-elem-equiv}.
\end{proof}

\begin{theorem} \label{thm:no-low-basis}
Let $(M,\X) \models \RCA^*_0$ and let $A \in \X$ be such that $(M, A) \models \neg \ind\Sigma_1(A)$.
Then there exists a $\Delta_1(A)$-definable infinite 0--1 tree $T$ such that for any
$W \subseteq M$, if $W$ is an infinite path in $T$, then $W$ is not arithmetically definable in $A$
(in particular, it is not low in $A$).
\end{theorem}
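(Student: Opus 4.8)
The plan is to reduce Theorem~\ref{thm:no-low-basis} to Lemma~\ref{lem:wkl-coded} together with Lemma~\ref{lem:w-not-in-w}, exactly as Lemma~\ref{lem:non-arith} was derived from those same ingredients, but now working with the topped model $(M, \Delta^0_1\text{-}\mathrm{Def}(M,A))$ rather than an arbitrary model of $\WKL^*_0$. First I would observe that since $(M,A) \models \ind \Delta_0(A) + \exp$ (this follows from $(M,\X)\models\RCA^*_0$ and $A\in\X$) and $(M,A)\models\neg\ind\Sigma_1(A)$, the structure $(M, \Delta^0_1\text{-}\mathrm{Def}(M,A))$ is a topped model of $\RCA^*_0 + \neg\ind\Sigma^0_1$. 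Apply Lemma~\ref{lem:wkl-coded} to this model with the set $A$ (which is $W_0$ in the notation there; note $A \in \Delta^0_1\text{-}\mathrm{Def}(M,A)$): this yields a $\Delta_1(A)$-definable infinite $0$--$1$ tree $T$ such that for \emph{any} $W\subseteq M$ that is an infinite path through $T$, the coded structure $(M,\{W_k:k\in M\})$ is a model of $\WKL^*_0$ with $W_0 = A$. This is the $T$ we take in the statement; crucially the lemma does not require $W$ to belong to any second-order universe, which is exactly what we need since the conclusion quantifies over arbitrary subsets $W\subseteq M$.

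Next, fix such a $W$ and let $\W = \{W_k : k\in M\}$, so $(M,\W)\models\WKL^*_0$ with $A \in \W$. I want to conclude $(M,\W)\models\neg\ind\Sigma^0_1$ as well, so that Corollary~\ref{cor:wkl-elem-equiv} can be applied to the pair $(M,\W)$ and $(M,\W)$ itself (or, more precisely, I need to compare $(M,\W)$ with a model of $\WKL^*_0$ containing the set witnessing the failure of $\Sigma^0_1$-induction). Since $(M,A)\models\neg\ind\Sigma_1(A)$ and $A = W_0 \in \W$, the failure of $\ind\Sigma^0_1$ in $(M,\W)$ is immediate: the same $\Sigma_1(A)$-definable proper cut witnesses it. Now suppose toward a contradiction that $W$ \emph{is} arithmetically definable in $A$, say by an arithmetical formula $\varphi(x,A)$ with no set parameters other than $A$. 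Then $(M, \Delta^0_1\text{-}\mathrm{Def}(M,A)) \models \exists X\,\forall x\,(x\in X \leftrightarrow \varphi(x,A))$ — indeed $W$ itself is then a $\Delta^0_1(A)$-set, so it is a set of the topped model. But by Lemma~\ref{lem:w-not-in-w}, $W\notin\W$, so $(M,\W)\not\models\exists X\,\forall x\,(x\in X\leftrightarrow\varphi(x,A))$.

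Finally, to reach a contradiction via Corollary~\ref{cor:wkl-elem-equiv} I need both $(M, \Delta^0_1\text{-}\mathrm{Def}(M,A))$ and $(M,\W)$ to be models of $\WKL^*_0$ whose intersection fails $\ind\Sigma^0_1$, with $A$ in the intersection, which holds since both contain $A$ and both see the $\Sigma_1(A)$-cut. The two structures are then elementarily equivalent with $A$ named as a parameter (they also have a common $\bbomega$-submodel $\Delta^0_1\text{-}\mathrm{Def}(M,A)$, so one is even elementary in the other by the second part of the corollary — but equivalence suffices here), and the $\mathcal{L}_2$-sentence $\exists X\,\forall x\,(x\in X\leftrightarrow\varphi(x,A))$ distinguishes them — contradiction. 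Hence $W$ is not arithmetically definable in $A$; in particular it is not $\Delta_2(A)$-definable, so $W$ is not low in $A$. I expect the only delicate point to be making sure the tree $T$ from Lemma~\ref{lem:wkl-coded} is genuinely $\Delta_1(A)$-definable and that one may legitimately quantify over \emph{all} paths $W$ without assuming $W\in\X$; but this is precisely what the remark following Lemma~\ref{lem:wkl-coded} guarantees, so there is no real obstacle — the argument is a direct transcription of Lemma~\ref{lem:non-arith} to the topped setting.
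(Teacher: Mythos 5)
There is a genuine gap, and it sits exactly at the contradiction step. From the assumption that $W$ is arithmetically definable in $A$ you infer that ``$W$ itself is then a $\Delta^0_1(A)$-set, so it is a set of the topped model''. This is false: arithmetical definability in $A$ permits any number of number quantifiers, and the paths the theorem is designed to exclude -- e.g.\ a path low in $A$, which is $\Delta_2(A)$-definable -- are precisely those that need \emph{not} be $\Delta_1(A)$-definable. So $(M,\Delta^0_1\textrm{-}\mathrm{Def}(M,A))$ need not satisfy $\exists X\,\forall x\,(x\in X\leftrightarrow\varphi(x,A))$, your two structures need not disagree on that sentence, and as written your argument only rules out $\Delta_1(A)$-definable paths, which is much weaker than the theorem. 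There is a second problem with the same step: Corollary \ref{cor:wkl-elem-equiv} requires \emph{both} structures to be models of $\WKL^*_0$, but the hypothesis of Theorem \ref{thm:no-low-basis} only gives $(M,\X)\models\RCA^*_0$, so the topped model $(M,\Delta^0_1\textrm{-}\mathrm{Def}(M,A))$ is merely a model of $\RCA^*_0$ and cannot be fed into the corollary. The direct ``transcription of Lemma \ref{lem:non-arith} to the topped setting'' therefore does not go through: in Lemma \ref{lem:non-arith} the ambient structure is a $\WKL^*_0$-model and the coding set $W$ is an element of it, and both of these features are lost here.

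The fix (which is how the paper concludes) keeps your first two paragraphs and changes only the endgame: having established that $(M,\W)\models\WKL^*_0+\neg\ind\Sigma^0_1$ with $A\in\W$, apply Lemma \ref{lem:non-arith} \emph{to $(M,\W)$ itself} to obtain some $B\in\W$ that is not arithmetically definable in $A$. But every $B\in\W$ has the form $W_k$ and is thus $\Delta_1(W)$-definable, so if $W$ were arithmetically definable in $A$, every member of $\W$ would be as well -- contradiction. Lemma \ref{lem:w-not-in-w} and the comparison of two $\WKL^*_0$-models via Corollary \ref{cor:wkl-elem-equiv} do get used, but only inside the proof of Lemma \ref{lem:non-arith}, where both structures genuinely satisfy $\WKL^*_0$ and the coded set genuinely belongs to the larger model; they cannot be invoked against the topped $\RCA^*_0$ model as you propose.
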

\begin{proof}
Let $(M, \X)$ and $A$ be as above, and let $T$ be the $\Delta_1(A)$-definable tree from Lemma \ref{lem:wkl-coded}.
If $W$ is an infinite path in $T$, then, letting $\W := \{W_k: k \in M\}$, we have
$(M,\W) \models \WKL^*_0$ and $A \in \W$.
If $W$ were arithmetically definable in $A$, then every set $B \in \W$ would also be arithmetically
definable in $A$, contradicting Lemma \ref{lem:non-arith}.	
\end{proof}

We may reformulate this as a reverse mathematics-style statement, which follows immediately from
Theorem \ref{thm:no-low-basis} and the aforementioned provability of the low basis theorem in $\RCA_0$.

\begin{corollary} For each $n \ge 2$, the following statements are equivalent provably in $\RCA^*_0$:
\begin{enumerate}[(i)]
\item $\RCA_0$,
\item for every infinite 0--1 tree $T$, there exists a $\Sigma_n(T)$-set $\extset W$ such that
$\extset W$ is an infinite path in $T$ and $\ind\Sigma_1(\extset W)$ holds,
\item for every infinite 0--1 tree $T$, there exists a $\Sigma_n(T)$-set $\extset W$ such that
$\extset W$ is an infinite path in $T$.
\end{enumerate}
\end{corollary}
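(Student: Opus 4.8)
The plan is to prove the cycle of implications (ii)$\,\Rightarrow\,$(iii)$\,\Rightarrow\,$(i)$\,\Rightarrow\,$(ii), working throughout over $\RCA^*_0$. The first implication is immediate, since (ii) is just (iii) with an extra conjunct. For the remaining two I would use, respectively, Theorem~\ref{thm:no-low-basis} and the $\RCA_0$-provable low basis theorem recalled just before the corollary.

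For (iii)$\,\Rightarrow\,$(i) I would argue contrapositively. Assuming $\neg\ind\Sigma^0_1$, I pick $A$ in the second-order universe with $(M,A)\models\neg\ind\Sigma_1(A)$, and let $T$ be the $\Delta_1(A)$-definable infinite $0$--$1$ tree supplied by Theorem~\ref{thm:no-low-basis} (it is a genuine set, by $\Delta^0_1$-comprehension, so it is a legitimate witness for the quantifier in (iii)). By that theorem, no infinite path in $T$ is arithmetically definable in $A$. The key observation is that, since $T \le_\mathrm{T} A$, substituting the $\Delta_1(A)$-definition of $T$ into any $\Sigma_n(T)$ definition turns a $\Sigma_n(T)$-set into a $\Sigma_n(A)$-set, in particular into a set arithmetical in $A$. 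Hence there can be no $\Sigma_n(T)$-set that is an infinite path in $T$, so (iii) fails.

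For (i)$\,\Rightarrow\,$(ii) I would fix an infinite $0$--$1$ tree $T$ in the model, assume $\RCA_0$, and pass to the topped submodel $(M,\Delta^0_1\textrm{-}\mathrm{Def}(M,T))$, which still models $\RCA_0$. Applying the low basis theorem in the provable form stated above, I obtain an infinite path $W$ through $T$ that is low in $T$ and such that $(M,\Delta^0_1\textrm{-}\mathrm{Def}(M,T\oplus W))\models\RCA_0$. Lowness then yields that $W$, being trivially $\Delta_2(T\oplus W)$-definable, is a $\Delta_2(T)$-set, hence a $\Sigma_n(T)$-set for all $n\ge2$; and $\ind\Sigma_1(W)$ holds because $W$ is an honest set of a model of $\RCA_0$ and this induction statement depends only on the first-order structure $(M,W)$. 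Taking $\extset W\defeq W$ then witnesses (ii) for $T$.

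The bookkeeping I expect to require the most care is the two ``syntactic substitution'' facts --- that $T\le_\mathrm{T}A$ makes every $\Sigma_n(T)$-set a $\Sigma_n(A)$-set, and that lowness in $T$ makes the path $W$ a $\Delta_2(T)$-set --- together with the mild point that the model-theoretic formulation of the $\RCA_0$-provable low basis theorem recalled above is precisely what is needed to run the argument for (i)$\,\Rightarrow\,$(ii) inside an arbitrary model of $\RCA^*_0+\RCA_0$. I do not expect any of these to be a genuine obstacle; beyond Theorem~\ref{thm:no-low-basis} itself there is no hard core to this corollary.
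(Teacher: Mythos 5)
Your proof is correct and is essentially the argument the paper intends: it leaves the corollary as an immediate consequence of Theorem \ref{thm:no-low-basis} (giving (iii)$\Rightarrow$(i) by contraposition, since a $\Sigma_n(T)$-set is arithmetical in $A$ when $T$ is $\Delta_1(A)$-definable) and of the $\RCA_0$-provable low basis theorem (giving (i)$\Rightarrow$(ii), the path being $\Delta_2(T)$-definable and satisfying $\ind\Sigma_1$), with (ii)$\Rightarrow$(iii) trivial. Your bookkeeping about substitution of $\Delta_1(A)$-definitions and about lowness matches what the paper takes for granted.
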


As far as we know, the following purely model-theoretic question remains open. If the answer is negative,
then it has to be witnessed by a model that is not only uncountable, but does not have countable cofinality.

\begin{question}
Is it the case that for every model $(M,\X) \models \RCA^*_0$ and every infinite 0--1 tree $T \in \X$,
there exists an infinite path in $T$?
\end{question}

To conclude this section, we return to the topic of the collapse of the analytic hierarchy.
We discuss how to strengthen Corollary \ref{cor:analytic-hierarchy-collapses}
to say that provably in $\RCA^*_0$, if $A$ is any set witnessing the failure of $\ind \Sigma^0_1$,
then each $\mathcal{L}_2$-formula is equivalent not merely to a $\Delta^1_1$ formula, but
to an arithmetical formula with $A$ as parameter.
In particular, if even $\ind \Sigma_1$ fails, then every $\mathcal{L}_2$-formula is equivalent
to an arithmetical one with no additional parameters.
The strengthened collapse result is
proved by a forcing argument which is routine but tedious, so we only provide a sketch.

It is well-known that if $T$ is an infinite 0--1 tree in a countable model of $\RCA^*_0$, then forcing
with $\Delta_1(T)$-definable infinite subtrees of $T$ gives rise to an infinite path in $T$
that still satisfies $\bd\Sigma^0_1$ \cite{simpson-smith}. The analogous forcing construction for adding paths to trees
that live in models of $\RCA_0$
was formalized within $\RCA_0$ in \cite[Sections 4--5]{avigad:formalizing-forcing}. Here, we formalize in $\RCA^*_0$
a variant of this forcing such that the generic sets correspond to coded $\bbomega$-models of $\WKL^*_0$.

The following definition is made in $\RCA^*_0$.

\begin{definition}
Given any set $X$ and another set $A$ that we treat as a parameter, let $T_{X,A}$ be a tree
defined like the one in Lemma \ref{lem:wkl-coded} but with paths corresponding
to sets $W$ that code an $\bbomega$-model of $\WKL^*_0$ with $W_0 = X$, $W_1 = A$.
The \emph{conditions} of the forcing notion $\mathbb{P}_{X,A}$ are the $\Delta_1(X,A)$-definable
infinite subtrees of $T_{X,A}$, ordered by inclusion.
A \emph{first-order name} has the form $\dot k$ for a natural number $k$,
and it is intended to denote $k$ itself. A \emph{second-order name} has the form $\dot G_k$
for a natural number $k$, and it is intended to denote $G_k$ where $G$ is a generic
path in $T_{X,A}$.

For a condition $S$ and a \emph{sentence} $\varphi$ of the language obtained
by extending $\mathcal{L}_2$ with all first- and second-order names, treated as constants,
the forcing relation $S \Vdash_{X,A} \varphi$ (with the subscripts often omitted below)
is defined in the following way.
If $\varphi$ is a purely first-order atom, then $S \Vdash \varphi$ if and only if $\varphi$ holds
(under the intended interpretation of names). If $\varphi$ has the form $t(\overline{\dot{k}}) \in \dot{G}_\ell$,
then $S \Vdash \varphi$ if the set $\{\sigma \in S: \sigma(\tuple{\ell, t(\bar k)}) = 0 \}$ is finite.
The $\Vdash$ relation is extended to non-atomic formulas (which we take to be built using $\neg, \land$, and $\exists$)
as follows:
\begin{align*}
S \Vdash \neg \varphi  &\quad\defeq\quad  \forall Q \! \preceq \! S\,(Q \not \Vdash \varphi),\\
S \Vdash \varphi \land \psi &\quad\defeq\quad  (S \Vdash \varphi \land S \Vdash \psi),\\
S \Vdash \exists x\, \varphi &\quad\defeq\quad  \forall Q \! \preceq \! S\,\exists R \! \preceq \! Q\,\exists k\, (R \Vdash \varphi(\dot k)),\\
S \Vdash \exists X\, \varphi &\quad\defeq\quad  \forall Q \! \preceq \! S\,\exists R \! \preceq \! Q\,\exists k\, (R \Vdash \varphi(\dot G_k)).
\end{align*}
\end{definition}

Note that for any $\mathcal{L}_2$-formula $\varphi(x_1,\ldots,x_n, Y_1, \ldots, Y_m)$, the statement $S \Vdash_{X,A} \varphi(\overline{\dot x}, \dot G_{y_1}, \ldots, \dot G_{y_m})$
can be expressed by an arithmetical formula in $\overline x, \overline y, S, X,$ and $A$.

For a model $(M, \X) \models \RCA^*_0$ and $X,A \in \X$, a generic filter $G$ in $\mathbb{P}_{X,A}$ can be identified
with an infinite path in $T_{X,A}$, which codes an $\bbomega$-model $(M,\{G_k: k \in M\}$ of $\WKL^*_0$
with $G_0 = X$ and $G_1 = A$. We refer to this structure as $M[G]$.
One can prove the following two statements by (simultaneous) induction on the complexity of $\varphi$.
Firstly, if $S$ is a condition, then $S \Vdash \varphi$ iff for every generic $G$ with $S \in G$ it holds that $M[G] \models \varphi$
(as usual, under the intended interpretation of names). Secondly, if $G$ is generic and $M[G] \models \varphi$,
then there is some condition $S \in G$ such that $S \Vdash \varphi$.

If $(M, \X)$ is a model of $\WKL^*_0$ and $A$ is a witness for the failure of $\ind \Sigma^0_1$ in $(M,\X)$, then by Corollary \ref{cor:wkl-elem-equiv}, for any $\overline c \in M$, any $X \in \X$, and $G$ generic for $\mathbb{P}_{X,A}$, the structure $(M[G], \overline c, X)$ is elementarily equivalent to $(M, \X, \overline c, X)$. This means that either every condition will force $\varphi(\overline{\dot c}, \dot G_0)$ or every condition will force $\neg \varphi(\overline{\dot c}, \dot G_0)$, depending on whether $\varphi(\overline c, X)$ holds. Thus, we obtain the following collapse result:

\begin{theorem}\label{thm:analytic-to-arithmetical}
Let $\varphi(\overline x, X)$ be an $\mathcal{L}_2$-formula. Then $\WKL^*_0$ proves that following statements are equivalent for any $\overline x, X$ and any $Y$ such that $\neg \ind \Sigma_1(Y)$ holds:
\begin{enumerate}[(i)]
\item $\varphi(\overline x, X)$,
\item $\Vdash_{X,Y} \varphi(\overline{\dot x},\dot G_0)$.
\end{enumerate}
In particular, over $\WKL^*_0 + \neg \ind \Sigma_1$, $\varphi(\overline x, X)$ is equivalent to the arithmetical formula
$\Vdash_{X,\emptyset} \varphi(\overline{\dot x},\dot G_0)$, which has no free variables other than $X$.
\end{theorem}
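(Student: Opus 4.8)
The plan is to prove Theorem~\ref{thm:analytic-to-arithmetical} by the standard forcing dictionary for $\mathbb{P}_{X,Y}$, using Corollary~\ref{cor:wkl-elem-equiv} to collapse the two directions of the forcing relation to a single case. First I would establish, by simultaneous induction on the complexity of $\varphi$, the two auxiliary facts stated just before the theorem: (a) for any condition $S$, $S \Vdash \varphi$ if and only if $M[G] \models \varphi$ for every generic $G$ containing $S$; and (b) if $G$ is generic and $M[G] \models \varphi$, then some $S \in G$ forces $\varphi$. The base cases are immediate from the definition of $\Vdash$ on atoms (for $t(\overline{\dot k}) \in \dot G_\ell$ one uses that a $\Delta_1(X,Y)$-definable subtree of $T_{X,Y}$ is infinite, so a generic path meets it, and genericity decides whether the relevant bit stabilizes to $0$ or to $1$); the $\neg$, $\land$, and $\exists$ cases are the usual density arguments, with the two kinds of existential quantifier handled by the two displayed clauses in the definition. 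One also needs the routine genericity facts that countably many dense sets can be met (the model is countable) and that the forcing relation for a fixed $\mathcal{L}_2$-formula is arithmetical in the parameters, which was already noted in the excerpt.

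Next I would invoke Corollary~\ref{cor:wkl-elem-equiv}. Fix $(M,\X) \models \WKL^*_0$, a tuple $\overline c \in M$, sets $X, Y \in \X$ with $\neg \ind \Sigma_1(Y)$, and a generic $G$ for $\mathbb{P}_{X,Y}$. Since $Y$ witnesses the failure of $\ind \Sigma^0_1$ and $Y = G_1 \in \{G_k : k \in M\}$, both $(M,\X)$ and $M[G] = (M, \{G_k : k \in M\})$ are models of $\WKL^*_0$ whose second-order parts share $Y$, so $(M, \X \cap \{G_k : k \in M\}) \models \neg \ind \Sigma^0_1$; moreover $X = G_0$ belongs to both. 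Hence by Corollary~\ref{cor:wkl-elem-equiv}, $(M[G], \overline c, X) \equiv (M, \X, \overline c, X)$ for all $\mathcal{L}_2$-formulas. Therefore $M[G] \models \varphi(\overline c, \dot G_0^{M[G]})$ if and only if $(M,\X) \models \varphi(\overline c, X)$, and crucially the right-hand side does not depend on $G$. Combined with fact~(a), this forces a dichotomy: if $(M,\X)\models\varphi(\overline c, X)$ then every condition forces $\varphi(\overline{\dot c}, \dot G_0)$, while if $(M,\X)\models\neg\varphi(\overline c, X)$ then no condition does, i.e.\ every condition forces $\neg\varphi(\overline{\dot c}, \dot G_0)$. (Here I also use that the empty tree $T_{X,Y}$ itself is a condition, so $\Vdash_{X,Y}\varphi(\overline{\dot x},\dot G_0)$ is equivalent to ``some condition forces it''.) This gives (i)~$\Leftrightarrow$~(ii). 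For the final sentence, when $\neg\ind\Sigma_1$ holds one may take $Y = \emptyset$, and since $\Vdash_{X,\emptyset}\varphi(\overline{\dot x},\dot G_0)$ is arithmetical in $X$ alone, $\varphi(\overline x, X)$ is provably equivalent over $\WKL^*_0 + \neg\ind\Sigma_1$ to that arithmetical formula.

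Finally I would note that although the argument above was phrased model-theoretically for a fixed countable $(M,\X)$, everything is first-orderizable: the forcing relation is arithmetical, the key recursion facts (a)--(b) are proved by an induction that goes through in $\RCA^*_0$ (the inductions are external, on the standard complexity of $\varphi$, so no strong induction axiom is needed for a fixed $\varphi$), and the appeal to Corollary~\ref{cor:wkl-elem-equiv} can be internalized because that corollary is itself a consequence of Theorem~\ref{thm:wkl-iso} applied to the coded $\bbomega$-model $M[G]$ sitting inside $(M,\X)$. So the stated equivalence is provable in $\WKL^*_0$ uniformly in $\varphi$.

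I expect the main obstacle to be the careful bookkeeping in the simultaneous induction establishing (a) and (b) — in particular making sure the density definitions of $S \Vdash \exists x\,\varphi$ and $S \Vdash \exists X\,\varphi$ mesh correctly with genericity, and verifying that the generic object really is (identified with) an infinite path through $T_{X,Y}$ coding an $\bbomega$-model of $\WKL^*_0$ with $G_0 = X$, $G_1 = Y$, so that Corollary~\ref{cor:wkl-elem-equiv} genuinely applies with $Y$ in the shared second-order part. This is precisely the ``routine but tedious'' forcing formalization the authors allude to, and I would present it as a sketch, citing \cite{avigad:formalizing-forcing} and \cite{simpson-smith} for the analogous constructions in $\RCA_0$ and the path-adding forcing in $\RCA^*_0$ respectively.
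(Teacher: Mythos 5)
Your proposal is correct and follows essentially the same route as the paper: prove the two truth-lemma facts for $\mathbb{P}_{X,Y}$ by simultaneous induction on $\varphi$, then use Corollary~\ref{cor:wkl-elem-equiv} (with $X=G_0$, $Y=G_1$ shared between $(M,\X)$ and the coded model $M[G]$) to conclude that whether a condition forces $\varphi(\overline{\dot c},\dot G_0)$ is decided uniformly by the truth of $\varphi(\overline c,X)$, which yields the equivalence and, with $Y=\emptyset$, the arithmetical reduction. The only nit is the phrase ``the empty tree $T_{X,Y}$'' where you mean the full tree $T_{X,Y}$ as the weakest condition; mathematically your argument is the paper's.
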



\section{Generalization to higher levels}\label{sec:generalization}

If $n\ge 2$ and $(M, \X)$ is a model of $\RCA_0 + \bd \Sigma^0_n + \neg \ind \Sigma^0_n$,
then the $\Delta^0_n$-definable sets of $(M, \X)$ form a model of $\RCA^*_0 + \neg \ind \Sigma^0_1$.
Thus, it is reasonable to expect that the results of Sections \ref{sec:iso} and \ref{sec:wklstar}
say something about $\RCA_0 + \bd \Sigma^0_n + \neg \ind \Sigma^0_n$,
for instance about $\Pi^1_1$-conservativity over that theory.
The case of $n=2$ is particularly interesting,
as some prominent problems concerning the first-order consequences of Ramsey's Theorem for pairs
and related statements boil down to the question whether these statements
are $\Pi^1_1$-conservative over $\RCA_0 + \bd \Sigma^0_2 + \neg \ind \Sigma^0_2$.

In this section, we show that Theorem \ref{thm:wkl-iso} does indeed have some consequences
for higher levels of the arithmetic hierarchy. First, however, we generalize the results of
Proposition \ref{prop:neg-is1-aca0} to $n \ge 2$.

\begin{proposition}\label{prop:neg-isn-aca0}
For each $n \ge 1$, the $\Pi^1_1$ consequences of $\RCA^*_0 + \bd \Sigma^0_n + \neg \ind \Sigma^0_n$ are contained in those of $\ACA_0$ and incomparable to the $\Pi^1_1$ consequences of $\RCA_0 + \ind \Sigma^0_m$ for each $m \ge n$.
\end{proposition}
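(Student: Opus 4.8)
The plan is to mimic the proof of Proposition~\ref{prop:neg-is1-aca0}, which is essentially the case $n=1$ (recall $\RCA^*_0\vdash\bd\Sigma^0_1$), the one genuinely new ingredient being Belanger's jump inversion, Theorem~\ref{thm:belanger-jump-inversion}. As in that proof, the statement splits in two. For containment in $\ACA_0$ it suffices, by contraposition, to show that every $\Sigma^1_1$ sentence $\xi=\exists X\,\alpha(X)$ with $\alpha$ arithmetical that is consistent with $\ACA_0$ is also consistent with $\RCA^*_0+\bd\Sigma^0_n+\neg\ind\Sigma^0_n$. For incomparability one must, for each $m\ge n$, exhibit $\Pi^1_1$ sentences separating $\RCA^*_0+\bd\Sigma^0_n+\neg\ind\Sigma^0_n$ from $\RCA_0+\ind\Sigma^0_m$ in both directions.

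For the first part I would begin exactly as in Proposition~\ref{prop:neg-is1-aca0}: pass to a countable recursively saturated $(M,\X)\models\ACA_0+\xi$, fix $A\in\X$ with $(M,A)\models\alpha(A)$ (so $(M,A)$ is a countable recursively saturated model of $\PA(A)$), build a sequence $\tuple{a_k}_{k<c}$ with $c$ nonstandard such that each $a_k$ bounds all elements definable in $(M,A)$ from parameters below $a_{k-1}$, and let $I$ be the cut generated by $C=\{a_k:k\in\omega\}$, with $A_I=A\cap I$. Then $I$ is closed under $\exp$ and under definability in $(M,A)$, so $(I,A_I)\preccurlyeq(M,A)$ and hence $(I,A_I)\models\PA(A_I)+\alpha(A_I)$; moreover $\omega$ is $\Sigma_1(C)$-definable in $I$, and, by the same trace computation as for $n=1$ relativized to $A^{(n-1)}$ (which equals $A^{(n-1)}\cap I=(A_I)^{(n-1)}$ by elementarity), $(I,(A_I)^{(n-1)}\oplus C)\models\bd\Sigma^0_1$. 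For $n\ge 2$ one cannot take $\Delta^0_1\text{-}\mathrm{Def}(I,A_I\oplus C)$ as in the $n=1$ case, since $C$ would make $\omega$ a $\Sigma^0_1$-cut, contradicting $\bd\Sigma^0_n$; instead I would apply Theorem~\ref{thm:belanger-jump-inversion} iteratively $n-1$ times. Using that $(I,A_I)\models\PA(A_I)$ entails $(I,A_I)\models\bd\Sigma^0_n$, the iterated form of the theorem (obtained by a straightforward induction: invert the jumps one at a time, absorbing the auxiliary set produced at each stage and applying the theorem with successively higher collection levels) yields $B\subseteq I$ such that $C$ is $\Delta_n(A_I\oplus B)$-definable, equivalently $C\le_\mathrm{T}(A_I\oplus B)^{(n-1)}$, and $(I,A_I\oplus B)\models\bd\Sigma^0_n$. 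Then $\omega$ is definable by a $\Sigma_1(C)$ and hence by a $\Sigma_n(A_I\oplus B)$ formula, while $\bd\Sigma^0_n$ — and with it $\ind\Sigma^0_{n-1}$ — holds, so $(I,\Delta^0_1\text{-}\mathrm{Def}(I,A_I\oplus B))$ is a model of $\RCA^*_0+\bd\Sigma^0_n+\neg\ind\Sigma^0_n$; since it contains $A_I$ and $\alpha(A_I)$ holds in it, $\xi$ holds as well.

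For the incomparability I would shift the two families of witnesses used for $n=1$. In one direction, $\neg\ind\Sigma^0_n$ implies $\neg\ind\Sigma^0_m$ for $m\ge n$, so by the results of Section~\ref{sec:towsner} the $\Pi^1_1$ principle $\mathrm{C}\Sigma^0_{m+1}$ is provable in $\RCA^*_0+\bd\Sigma^0_n+\neg\ind\Sigma^0_n$ but not in $\RCA_0+\ind\Sigma^0_m$. In the other direction, $\mathrm{Con}(\ind\Sigma_{n-1}+\exp)$ is a $\Pi^0_1$ (hence $\Pi^1_1$) sentence that is provable in $\ind\Sigma_m$, and so in $\RCA_0+\ind\Sigma^0_m$, for every $m\ge n$ (since $n-1<m$), but is not provable in $\RCA^*_0+\bd\Sigma^0_n+\neg\ind\Sigma^0_n$: this theory is $\Pi_1$-conservative over $\ind\Sigma_{n-1}+\exp$ — which follows from the Kirby--Paris $\Pi_{n+1}$-conservativity of $\bd\Sigma_n$ over $\ind\Sigma_{n-1}$ together with the standard realization of $\neg\ind\Sigma_n$ in cuts of recursively saturated models — so by G\"odel's second incompleteness theorem it does not prove $\mathrm{Con}(\ind\Sigma_{n-1}+\exp)$.

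I expect the main difficulty to be the bookkeeping in the first part: confirming that the $n-1$ applications of Theorem~\ref{thm:belanger-jump-inversion} can be chained so that the failure of induction lands at level exactly $n$ — so that $\bd\Sigma^0_n$, and hence $\ind\Sigma^0_{n-1}$, is preserved rather than destroyed — while tracking the Turing-reducibility and collection-level relations at each step, and checking that the trace fact $(I,(A_I)^{(n-1)}\oplus C)\models\bd\Sigma^0_1$ needed to start the iteration goes through for the cut produced by the leapfrogging construction, exactly as it does in the case $n=1$. Everything else should be a routine adaptation of the argument for $n=1$.
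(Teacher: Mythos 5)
Your proof of the containment in the $\Pi^1_1$ consequences of $\ACA_0$ is essentially the paper's own argument: the same recursively saturated model of $\ACA_0+\alpha(A)$, the same leapfrog cut $I$ with the coded cofinal trace $C$ and the observation that $(I,(A_I)^{(m)}\oplus C)\models\bd\Sigma^0_1$, followed by $n-1$ chained applications of Theorem~\ref{thm:belanger-jump-inversion}, raising the collection level by one at each stage while keeping $C$ (and hence $\omega$) $\Sigma_n$-definable from the final set, so that $(I,\Delta^0_1\text{-}\mathrm{Def}(I,A_I\oplus B))\models\RCA^*_0+\bd\Sigma^0_n+\neg\ind\Sigma^0_n+\exists X\,\alpha(X)$. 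Your use of $\mathrm{C}\Sigma^0_{m+1}$ (via Theorem~\ref{thm:neg-is1-cs2}) for one direction of the incomparability also coincides with the paper.

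The weak spot is your justification of the other direction. You argue that $\RCA^*_0+\bd\Sigma^0_n+\neg\ind\Sigma^0_n$ does not prove $\Con(\ind\Sigma_{n-1}+\exp)$ by claiming that this theory is $\Pi_1$-conservative over $\ind\Sigma_{n-1}+\exp$, citing Kirby--Paris conservativity together with ``the standard realization of $\neg\ind\Sigma_n$ in cuts of recursively saturated models''. That realization step is precisely the nontrivial point: given a countable model of $\bd\Sigma_n+\exp+\neg\pi$ that happens to satisfy $\ind\Sigma_n$, producing a cut together with a set $A$ such that $\bd\Sigma_n(A)$ holds while $\ind\Sigma_n(A)$ fails (and $\neg\pi$ survives) is not a routine cut argument --- arranging collection relative to the set coding the cofinal sequence is exactly what forced the elementary leapfrog plus the iterated jump inversion in your containment proof, and those tools are not delivered by Kirby--Paris conservativity alone; one would have to rerun the whole Belanger-style construction with $\Sigma_n$-Skolem functions. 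Fortunately none of this is needed for the sentence you actually use: since $\ind\Sigma_m\vdash\Con(\ind\Sigma_{n-1}+\exp)$ for $m\ge n$, any model of $\bd\Sigma_n+\exp+\neg\Con(\ind\Sigma_{n-1}+\exp)$ automatically fails \emph{lightface} $\Sigma_n$-induction, so its $\Delta_1$-definable sets already form a model of $\RCA^*_0+\bd\Sigma^0_n+\neg\ind\Sigma^0_n+\neg\Con(\ind\Sigma_{n-1}+\exp)$; such a first-order model exists because $\bd\Sigma_n+\exp$ is $\Pi_{n+1}$-conservative over $\ind\Sigma_{n-1}+\exp$ and G\"odel's second incompleteness theorem applies. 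This cheap observation is what the paper's one-line reference back to Proposition~\ref{prop:neg-is1-aca0} (with the witnesses $\mathrm{C}\Sigma^0_{m+1}$ and $\Con(\ind\Sigma_{n-1})$) amounts to, and with it your argument is complete.
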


\begin{proof}
The argument for incomparability with the $\Pi^1_1$ consequences of $\RCA_0 + \ind \Sigma^0_m$ for $m \ge n$
is essentially the same as in the proof of Proposition \ref{prop:neg-is1-aca0}
and uses the statements $\mathrm{C}\Sigma^0_{m+1}$ and $\mathrm{Con}(\ind \Sigma_{n-1})$.

The argument showing that the $\Pi^1_1$ consequences of $\RCA^*_0 + \bd \Sigma^0_n + \neg \ind \Sigma^0_n$ are contained in those of $\ACA_0$ is also similar to the one in Proposition \ref{prop:neg-is1-aca0},
but now it has to be combined with a ``jump inversion'' argument based on \cite{belanger:coh}.
Let $(M,\X)$ be a countable recursively saturated model of $\ACA_0 + \alpha(A)$, where $A \in \X$ and $\alpha$ is arithmetical, and let $C, I, A_I$ be obtained as in the proof of Proposition \ref{prop:neg-is1-aca0}. We then know that
$(I, A_I \oplus C) \models \bd \Sigma^0_1 + \alpha(A_I)$ and $(I,  C) \models \neg \ind \Sigma^0_1$. Moreover,
$(I, A_I)$ is an elementary substructure of $(M,A)$, which means in particular that $(I, A_I)$ satisfies induction for all arithmetical formulas, and, by construction, that $(I, (A_I)^{(m)} \oplus C) \models \bd \Sigma^0_1$ for any $m$.

Let $C_1 := C$. If $n = 1$, we have nothing more to do.
Otherwise, note that $(I, (A_I)^{(n-2)}) \models \bd \Sigma^0_2$.
Since $(I, (A_I)^{(n-1)} \oplus C_1) \models \bd \Sigma^0_1$,
we can use Belanger's jump inversion Theorem \ref{thm:belanger-jump-inversion} to obtain some $C_2 \subseteq I$
such that $(I, (A_I)^{(n-2)} \oplus C_2) \models \bd \Sigma^0_2$ and $C_1$ is $\Delta_2(C_2)$-definable. If $n =2$, we are done, and otherwise, since $(I, (A_I)^{(n-3)}) \models \bd \Sigma^0_3$ and $(I, (A_I)^{(n-2)} \oplus C_2) \models \bd \Sigma^0_2$,
we can use Theorem~\ref{thm:belanger-jump-inversion} again to get $C_3$ such that
$(I, (A_I)^{(n-3)} \oplus C_3) \models \bd \Sigma^0_3$ and $C_1$ is $\Delta_2(\Delta_2(C_3))$-,
thus $\Delta_3(C_3)$-definable.
Continuing in this way, we eventually get $C_n$
such that $(I, A_I \oplus C_n) \models \bd \Sigma^0_n$ and $C_1$ is $\Delta_n(C_n)$-definable.
But then $(I, \Delta^0_1\textrm{-}\mathrm{Def}(A_I \oplus C_n))$
is a model of $\RCA^*_0 + \bd \Sigma^0_n + \neg \ind \Sigma^0_n + \exists X \alpha(X)$.
\end{proof}

\begin{corollary}
For any $n \ge 1$, the $\Pi^{1}_{1}$-consequences of $\RCA^*_0 + \bd \Sigma^0_n + \neg \ind \Sigma^0_n$ are contained in those of
\begin{equation}\label{eqn:weak-ib}
\RCA^*_0 + \bd \Sigma^0_n + \{\ind \Sigma^0_m \rightarrow \bd \Sigma^0_{m+1} : m \in \omega \}.
\end{equation}
\end{corollary}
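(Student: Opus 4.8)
The plan is to argue by contraposition, so it suffices to show that every $\Sigma^1_1$ sentence $\xi$ consistent with \eqref{eqn:weak-ib} is consistent with $\RCA^*_0 + \bd\Sigma^0_n + \neg\ind\Sigma^0_n$. Write $\xi$ as $\exists X\,\alpha(X)$ with $\alpha$ arithmetical and fix a countable model $(M,\X) \models \eqref{eqn:weak-ib} + \xi$ together with a witness $A \in \X$ satisfying $(M,A) \models \alpha(A)$. Let $k$ be the supremum of those $m \in \omega$ for which $(M,\X) \models \ind\Sigma^0_m$, with $k = \infty$ if this holds for every $m$; since $\bd\Sigma^0_n$ proves $\ind\Sigma^0_{n-1}$ we have $k \ge n-1$. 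If $k = n-1$ there is nothing to do, as $(M,\X)$ is then already a model of $\RCA^*_0 + \bd\Sigma^0_n + \neg\ind\Sigma^0_n + \xi$. The remaining cases are $n \le k < \infty$ and $k = \infty$.

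For $n \le k < \infty$: the relevant instance of the scheme in \eqref{eqn:weak-ib} gives $(M,\X) \models \bd\Sigma^0_{k+1}$, while $(M,\X) \models \neg\ind\Sigma^0_{k+1}$ by maximality of $k$. Using $\bd\Sigma^0_{k+1}$, a routine manipulation produces from the failure of $\ind\Sigma^0_{k+1}$ a set $B \in \X$ and a proper $\Sigma_{k+1}(B)$-definable cut $J$; replacing $B$ by $A \oplus B$, one may also assume $A \le_\mathrm{T} B$. Now pass to the iterated jump $D \defeq B^{(k+1-n)}$, which is well defined because $(M,B) \models \ind\Delta_0(B) + \exp$. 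By the standard jump-shift facts for the relativised arithmetical hierarchy — valid given the collection that is available — $\bd\Sigma_{k+1}(B)$ implies $\bd\Sigma_n(D)$ and every $\Sigma_{k+1}(B)$-definable set is $\Sigma_n(D)$-definable; hence $(M,D) \models \bd\Sigma_n(D) + \exp$ and $J$ is a proper $\Sigma_n(D)$-definable cut, so $(M,D) \models \neg\ind\Sigma_n(D)$. The model one wants is then the topped structure $(M, \Delta^0_1\text{-}\mathrm{Def}(M,D))$: it satisfies $\RCA^*_0$ because $(M,D) \models \bd\Sigma_1(D) + \exp$; any $\Sigma^0_n$-formula of this structure has set parameters $\le_\mathrm{T} D$, hence is equivalent, provably in $\bd\Sigma_1(D)$, to a $\Sigma_n(D)$-formula, so $\bd\Sigma_n(D)$ yields $\bd\Sigma^0_n$; the cut $J$ is $\Sigma_n(D)$-definable, hence $\Sigma^0_n$-definable here and still proper, so $\neg\ind\Sigma^0_n$ holds; and $A \le_\mathrm{T} D$ puts $A$ into the structure, so $\xi$ holds.

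For $k = \infty$: here $(M,\X)$ satisfies induction for all arithmetical formulas, so one closes $\X$ under arithmetical definability to obtain $\X^*$. Then $\Sigma^0_1$-comprehension holds in $(M,\X^*)$ by construction and full arithmetical induction is inherited (every parameter from $\X^*$ is arithmetically definable from parameters in $\X$), so $(M,\X^*) \models \ACA_0$; and $(M,\X^*) \models \xi$ since $A \in \X \subseteq \X^*$. Proposition \ref{prop:neg-isn-aca0} then gives the consistency of $\xi$ with $\RCA^*_0 + \bd\Sigma^0_n + \neg\ind\Sigma^0_n$, completing the argument. (Together with the trivial fact that $\RCA^*_0 + \bd\Sigma^0_n + \neg\ind\Sigma^0_n$ proves \eqref{eqn:weak-ib}, this shows the two theories have the same $\Pi^1_1$ consequences.) I expect the main obstacle to be the middle case: one must verify that passing to the $\Delta^0_1$-definable sets of the iterated jump $D = B^{(k+1-n)}$ — rather than to the $\Delta^0_n$-definable sets of the original model, which would only give $\bd\Sigma^0_1$ — genuinely produces a model of $\bd\Sigma^0_n$, and that the jump-shift between ``$\bd\Sigma^0_{k+1} + \neg\ind\Sigma^0_{k+1}$ relative to $B$'' and ``$\bd\Sigma^0_n + \neg\ind\Sigma^0_n$ relative to $D$'' uses only the induction actually present.
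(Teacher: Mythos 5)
Your argument is correct and takes essentially the same route as the paper: split on the least $m$ with $\ind\Sigma^0_m$ failing, shift down $m-n$ levels of the arithmetic hierarchy in the case $n \le m < \infty$, and, when every $\ind\Sigma^0_m$ holds, close under arithmetical definability and invoke Proposition \ref{prop:neg-isn-aca0}. The only difference is one of implementation: the paper passes directly to $(M,\Delta^0_{m-n+1}\textrm{-}\mathrm{Def}(M,\X))$, whereas you realize the same shift through a topped model $(M,\Delta^0_{1}\textrm{-}\mathrm{Def}(M,B^{(m-n)}))$ obtained from iterated jumps of a single set $B$ coding $A$ and a witness to the induction failure; both versions rest on the same standard collection/jump-shift facts, so this is a presentational rather than substantive divergence.
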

The theory in (\ref{eqn:weak-ib}) is one of two natural relativizations of the theory IB from \cite{kaye:theory-kappa-like}.
In \cite{kky:ramsey-rca0star}, this is referred to as the ``weak'' relativization.

\begin{proof}
Let $(M,\X)$ be a model of the theory in (\ref{eqn:weak-ib}) and let $A \in \X$ satisfy $\alpha(A)$ with $\alpha$ arithmetical.
If $m \in \omega$ is the smallest such that $(M, \X) \not \models \ind \Sigma^0_{m}$, then ${m \ge n}$ and
$(M, \X) \models \bd \Sigma^0_{m}$,
so $(M, \Delta^0_{m-n+1}\textrm{-}\mathrm{Def}(M, \X)) \models \RCA^*_0 + \bd \Sigma^0_n + \neg \ind \Sigma^0_n + \exists X \alpha(X)$. On the other hand, if $(M,\X)$ satisfies induction for all arithmetical formulas,
then the closure of $\X$ under arithmetical definability witnesses that $\exists X \alpha(X)$ is consistent with $\ACA_0$ and thus,
by Proposition \ref{prop:neg-isn-aca0}, with $\RCA^*_0 + \bd \Sigma^0_n + \neg \ind \Sigma^0_n$.
\end{proof}

In order to generalize further results of Section \ref{sec:wklstar},
 in particular Lemma \ref{lem:criterion-pi11-cons-over-not-is1} and Theorem \ref{thm:pi11-cons-over-not-is1},
to arbitrary $n$, we need a suitable variant of Weak K\"onig's Lemma.

\begin{definition}
Let $\Delta^0_n$-$\WKL$ be the statement:
``for every $\Delta^0_n$-set $\extset T$ that is an infinite 0--1 tree,
there exists a $\Delta^0_n$-set $\extset W$ that is an infinite path in $\extset T$''.
\end{definition}

Note that $\Delta^0_1$-$\WKL$ is equivalent to $\WKL$ provably in $\RCA^*_0$.
On the other hand, Belanger \cite{belanger:coh} showed that $\Delta^0_2$-$\WKL$ is
equivalent to the cohesive set principle $\COH$ over $\RCA_0 + \bd \Sigma^0_2$.
The appearance of $\bd \Sigma^0_2$ here is not incidental. $\COH$,
being $\Pi^1_1$-conservative over $\RCA_0$ \cite{cholak-jockusch-slaman}, does not prove $\bd\Sigma^0_2$,
but an argument in the spirit of \cite[Proposition 5]{ferreira:feasible-analysis} shows that $\Delta^0_2$-$\WKL$ does.

\begin{proposition}\label{prop:n-wkl-bd}
For every $n \ge 1$, $\Delta^0_n$-$\WKL$ implies $\bd \Sigma^0_n$ over $\RCA^*_0$. In fact, $\bd \Sigma^0_n$ is already implied by the statement ``for every $\Delta^0_n$-set that is an infinite 0--1 tree and every number $k$, there is a node at level $k$ in the tree with infinitely many nodes above it''.
\end{proposition}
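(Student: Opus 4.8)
The plan is to prove the ``in fact'' clause, since the first assertion follows from it: writing $(\star_n)$ for the displayed statement ``every infinite $\Delta^0_n$-definable $0$--$1$ tree has, at every level $k$, a node with infinitely many nodes above it'', note that $\Delta^0_n$-$\WKL$ trivially implies $(\star_n)$, because an infinite path meets each level $k$ in a node that it then extends by nodes of every length. So it suffices to prove, by induction on $n\ge 1$, that $\RCA^*_0$ proves $(\star_n)\to\bd\Sigma^0_n$. For $n=1$ there is nothing to do, since $\RCA^*_0$ already proves $\bd\Sigma^0_1$. For the inductive step the first point is that $(\star_{n+1})$ implies $(\star_n)$ — every $\Delta^0_n$-set is a $\Delta^0_{n+1}$-set, so $(\star_{n+1})$ applies to $\Delta^0_n$ trees — and hence, by the induction hypothesis, $(\star_{n+1})$ gives us $\bd\Sigma^0_n$ to work with. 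Granting $\bd\Sigma^0_n$, I would argue by contradiction: assuming $\neg\bd\Sigma^0_{n+1}$, I would build an infinite $\Delta^0_{n+1}$-definable $0$--$1$ tree with a level all of whose nodes have only boundedly many nodes above them, contradicting $(\star_{n+1})$.

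The tree is built as follows. From $\neg\bd\Sigma^0_{n+1}$, writing the $\Sigma^0_{n+1}$ witness formula in the form $\exists u\,\psi$ with $\psi\in\Pi^0_n$ and folding $u$ into the witnessing variable, one obtains a number $a$ and a $\Pi^0_n$ formula $\pi(x,y)$ (with parameters) such that $\forall x<a\,\exists y\,\pi(x,y)$ holds but there is no $z$ with $\forall x<a\,\exists y<z\,\pi(x,y)$. Let $c$ be least with $2^c\ge a$. The tree $T$ consists of all $0$--$1$ strings of length $\le c$, together with those strings $\sigma$ of length $\ell>c$ such that the number $x<2^c$ whose binary expansion is $\sigma{\upharpoonright}c$ satisfies $x<a$, all bits of $\sigma$ from position $c$ onwards are $0$, and $\neg\pi(x,j)$ holds for every $j<\ell-c$. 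In other words, the first $c$ levels are a full binary ``selector'', and above the leaf labelled $x<a$ hangs a single branch of $0$'s of length equal to the least witness for $\pi(x,\cdot)$.

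Three short verifications remain. First, $T$ is a $\Delta^0_{n+1}$-set: the only clause that is not arithmetical is $\forall j<\ell-c\,\neg\pi(x,j)$, a bounded universal quantifier over the $\Sigma^0_n$ formula $\neg\pi$, which by $\bd\Sigma^0_n$ is equivalent to a $\Sigma^0_n$ formula; since $\Sigma^0_n\subseteq\Delta^0_{n+1}$, membership in $T$ is given by both a $\Sigma^0_{n+1}$ and a $\Pi^0_{n+1}$ formula. Second, $T$ is an infinite $0$--$1$ tree: closure under initial segments is immediate from the definition, and for each $\ell>c$ the failure of $\forall x<a\,\exists y<\ell-c\,\pi(x,y)$ yields some $x<a$ with $\neg\pi(x,j)$ for all $j<\ell-c$, hence a node of $T$ of length $\ell$. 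Third, every node at level $c$ has only boundedly many nodes above it: such a node is a number $x<2^c$; if $x\ge a$ it is a leaf, and if $x<a$ its proper extensions in $T$ are the strings $x0^m$ with $\forall j<m\,\neg\pi(x,j)$, and the set of such $m$ is bounded by any $y$ with $\pi(x,y)$. Thus $(\star_{n+1})$ fails at $k=c$, the desired contradiction.

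The main obstacle is exactly the complexity bookkeeping in the first verification: a tree built naively from the $\Sigma^0_{n+1}$ witness has a $\Pi^0_{n+1}$ (not obviously $\Delta^0_{n+1}$) membership relation, since ``there is no witness below $m$'' is $\Pi^0_{n+1}$. The two moves that defeat this are (i) passing to a $\Pi^0_n$ witness formula via the normal form, which turns ``no witness below $m$'' into a \emph{bounded} universal quantifier over a $\Sigma^0_n$ matrix, and (ii) invoking $\bd\Sigma^0_n$ — available precisely because $(\star_{n+1})$ entails $(\star_n)$ and the induction hypothesis applies — to collapse that bounded quantifier back into $\Sigma^0_n\subseteq\Delta^0_{n+1}$. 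Everything else (the coding of $0$--$1$ strings, the existence of $c$, the elementary bounded‑search estimates) is routine and uses only $\ind\Delta^0_0+\exp$.
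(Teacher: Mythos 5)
Your proposal is correct and follows essentially the same route as the paper: an external induction on the level (the paper phrases it as induction on $m\le n$ for the fixed statement about $\Delta^0_n$-trees, you as induction on $n$ using $(\star_{n+1})\to(\star_n)$), with the key step in both being that the failure of $\Sigma^0_{n+1}$-collection for a $\Pi^0_n$ matrix yields a counterexample tree whose membership relation collapses to $\Sigma^0_n\subseteq\Delta^0_{n+1}$ thanks to the $\bd\Sigma^0_n$ obtained from the induction hypothesis. The only difference is cosmetic (your tree hangs a single $0$-branch above each coded instance $x<a$, while the paper allows arbitrary extensions of each length-$k$ string until a witness appears), and it does not affect the argument.
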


\begin{proof}
For fixed $n$, we argue that the statement above implies $\bd \Sigma^0_m$ by external induction on $m \le n$. The thesis obviously holds for $m = 1$. Now assume that it holds for $m < n$ and that $\bd \Sigma^0_{m+1}$ fails, and let $\psi(x,y)$ be a $\Pi^0_m$ formula such that for some $k$, it holds that $\forall \sigma \! \in \! \{0,1\}^k\, \exists y \, \psi(\sigma,y)$ but the witnesses $y$ cannot be bounded in a way independent of $\sigma$.

Consider the definable set of binary strings $\extset T$ consisting of all strings with length $\le k$ and all $\sigma^\frown \tau$ where $|\sigma| = k$ and there is no $y \le |\tau|$ with $\psi(\sigma, y)$. Then $\extset T$ is a $\Delta^0_{m+1}$-set (in fact, a $\Sigma^0_m$-set) by $\bd \Sigma^0_m$. Moreover, it is an infinite 0--1 tree, but for every $\sigma$ of length $k$ there are only finitely many vertices in $\extset T$ above $\sigma$.
\end{proof}

\begin{lemma}\label{lem:d0n-wkl-cons}
For every $n \ge 1$, $\Delta^0_n$-$\WKL$ is $\Pi^1_1$-conservative over $\RCA^*_0 + \bd\Sigma^0_n$.
Moreover, any countable topped model of $\RCA^*_0 + \bd\Sigma^0_n$
can be $\bbomega$-extended to a model of $\RCA^*_0 + \Delta^0_n$-$\WKL$.
\end{lemma}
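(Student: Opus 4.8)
The plan is to prove the ``moreover'' clause first and then derive $\Pi^1_1$-conservativity from it by a routine model-theoretic argument. For the extension statement, fix a countable topped model $(M,\Delta^0_1\text{-}\mathrm{Def}(M,A)) \models \RCA^*_0 + \bd\Sigma^0_n$; here $(M,A) \models \bd\Sigma_n(A) + \exp$. We want an $\bbomega$-extension in which every $\Delta^0_n$-definable infinite $0$--$1$ tree has a $\Delta^0_n$-definable infinite path. The key observation is that a $\Delta^0_n$-definable tree is $\Delta_1(A^{(n-1)})$-definable, so we should look for paths ``one level up'', i.e.\ we want to add, for each such tree $T$, a path $P$ such that $P \le_\mathrm{T} (A\oplus G)^{(n-1)}$ for some new $G$ — more precisely, so that $P$ becomes $\Delta^0_n$-definable in the extension. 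First I would handle the single-tree case: given a $\Delta_1(A^{(n-1)})$-definable infinite $0$--$1$ tree $T$, apply the usual formalized low basis / forcing-with-subtrees argument (as in \cite{simpson-smith}, or for the $\RCA_0$ version \cite{avigad:formalizing-forcing}) relative to $A^{(n-1)}$ to get a path $P$ that is low over $A^{(n-1)}$, meaning $(A^{(n-1)} \oplus P)' \equiv_\mathrm{T} (A^{(n-1)})'$ and $(M, A^{(n-1)} \oplus P) \models \bd\Sigma^0_1$. Then invoke Belanger's jump inversion (Theorem~\ref{thm:belanger-jump-inversion}) repeatedly, exactly as in the proof of Proposition~\ref{prop:neg-isn-aca0}, to pull $P$ down to a set $G$ with $(M, A \oplus G) \models \bd\Sigma^0_n$ and $P$ being $\Delta_n(A\oplus G)$-definable; since $P \in \Delta^0_n\text{-}\mathrm{Def}(M, A\oplus G)$ and this is a model of $\RCA^*_0+\bd\Sigma^0_n$, we have made progress.

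Next I would iterate this over all $\Delta^0_n$-definable trees by a back-and-forth / union-of-chains bookkeeping: enumerate (in $\omega$ steps, as the model is countable) all pairs consisting of a $\Delta^0_n$-definition in the current model of an infinite $0$--$1$ tree, at each stage performing the single-tree construction above to adjoin a path, always maintaining $\bd\Sigma^0_n$ over the new topping set and $\neg\ind\Sigma^0_n$ (which survives because it is witnessed by a $\Sigma_n(A)$-definable cut that remains $\Sigma^0_n$-definable — though actually $\neg\ind\Sigma^0_n$ is not assumed here, so this point is moot). One must be careful that a tree which was $\Delta^0_n$-definable at an earlier stage is still $\Delta^0_n$-definable later and still infinite — infiniteness is a $\Pi^0_{n+1}$, hence arithmetical, property and is preserved since we only add sets, and $\Delta^0_n$-definitions are preserved upward. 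Taking the union of the resulting chain of topped models of $\RCA^*_0 + \bd\Sigma^0_n$ yields an $\bbomega$-extension $(M,\Y)$; by $\bd\Sigma^0_n$ being $\Pi^1_2$ over a chain and the usual argument it still satisfies $\bd\Sigma^0_n$, and by construction every $\Delta^0_n$-definable infinite $0$--$1$ tree of $(M,\Y)$ — which, being $\Delta_1(A_\infty^{(n-1)})$ for the limit topping set, appeared at some finite stage — has a $\Delta^0_n$-definable path. Hence $(M,\Y) \models \RCA^*_0 + \Delta^0_n\text{-}\WKL$.

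For the $\Pi^1_1$-conservativity statement: let $\psi$ be $\Pi^1_1$ and suppose $\RCA^*_0 + \bd\Sigma^0_n \not\vdash \psi$. Take a countable $(M,\X) \models \RCA^*_0 + \bd\Sigma^0_n + \neg\psi$; passing to a $\Delta^0_1$-definable closure we may assume it is topped (this preserves $\bd\Sigma^0_n$ and the $\Sigma^1_1$ fact $\neg\psi$, since $\neg\psi$ is $\Sigma^1_1$ and witnessed by a set which lies in the smaller model). Apply the ``moreover'' clause to get an $\bbomega$-extension $(M,\Y) \models \RCA^*_0 + \Delta^0_n\text{-}\WKL$; since $\neg\psi$ is $\Sigma^1_1$ and $\X \subseteq \Y$, $(M,\Y) \models \neg\psi$ as well. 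So $\Delta^0_n\text{-}\WKL$ is consistent with $\RCA^*_0 + \bd\Sigma^0_n + \neg\psi$, proving conservativity.

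The main obstacle I expect is the bookkeeping in the iteration: one must ensure that the single-tree step, which via Belanger's theorem changes the topping set from $A$ to $A\oplus G$, interacts correctly with trees enumerated at later stages (their $\Delta^0_n$-definitions now refer to the new topping set), and that the $\bd\Sigma^0_n$-preservation through the chain and through each jump-inversion step is genuinely uniform. Getting the jump-inversion iteration to land exactly at level $n$ (so that $P$ is $\Delta_n$, not merely $\Delta_{n+1}$, over the new top) requires the same careful counting as in Proposition~\ref{prop:neg-isn-aca0}, including the base observation that $(I, (A_I)^{(n-2)}) \models \bd\Sigma^0_2$; here the analogue is that at each stage the current topping set $A$ satisfies $(M, A^{(n-2)}) \models \bd\Sigma^0_2$, which follows from $(M,A) \models \bd\Sigma^0_n$. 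The forcing-based low basis step itself is by now standard and I would simply cite \cite{simpson-smith} and \cite{avigad:formalizing-forcing}.
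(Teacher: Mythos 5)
Your proposal is essentially the paper's argument. Both proofs combine the Simpson--Smith $\bd\Sigma^0_1$-preserving path forcing at the top level with Belanger's jump inversion (Theorem~\ref{thm:belanger-jump-inversion}) to turn that path into a $\Delta^0_n$-definable set over a new topping set satisfying $\bd\Sigma^0_n$, and then dispose of all $\Delta^0_n$-definable trees by the routine dovetailing, union-of-chains, and conservativity arguments that you spell out (and the paper leaves implicit). The only organizational difference is that the paper packages the descent as an induction on $n$ -- one application of Belanger's theorem per level, with the $n=1$ case cited from \cite{simpson-smith} -- whereas you unroll it into a single forcing step relative to $A^{(n-1)}$ followed by $n-1$ explicit jump inversions as in Proposition~\ref{prop:neg-isn-aca0}; these are the same argument, and your bookkeeping of the quantifier levels (collapsing $\Delta_n(\Delta_2(\cdot))$ to $\Delta_n$/$\Delta_{n+1}$ via collection) matches the paper's.

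One claim in your single-tree step is wrong, though harmlessly so: you cannot in general obtain a path $P$ that is \emph{low} over $A^{(n-1)}$, i.e.\ with $(A^{(n-1)}\oplus P)'\equiv_{\mathrm{T}}(A^{(n-1)})'$. The formalized low basis theorem requires $\ind\Sigma_1(A^{(n-1)})$, which is not available here, and Theorem~\ref{thm:no-low-basis} of this very paper shows that when $\ind\Sigma_1(A^{(n-1)})$ fails there are $\Delta_1(A^{(n-1)})$-definable infinite $0$--$1$ trees with no path arithmetical in $A^{(n-1)}$ at all. Fortunately, your subsequent steps never use the lowness clause: the first application of Belanger's theorem only needs $(M,A^{(n-2)})\models\bd\Sigma^0_2$ and $(M,A^{(n-1)}\oplus P)\models\bd\Sigma^0_1$, and the latter is exactly what the Simpson--Smith forcing with $\Delta_1(A^{(n-1)})$-definable subtrees provides. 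With the lowness assertion deleted, the proof is correct.
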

\begin{proof}
For $n = 1$, this is \cite[Corollary 4.7]{simpson-smith}, and for $n = 2$,
it follows from results of \cite{csy:conservation-weaker} and \cite{belanger:coh}.

We prove the general case by induction on $n$.
Like in the proof of Proposition~\ref{prop:neg-isn-aca0}, we use a jump inversion argument based on Theorem \ref{thm:belanger-jump-inversion}.
Assume that the statement holds for $n$. To prove it for $n+1$, it is enough
to show that given any countable model $(M, A) \models \bd\Sigma^0_{n+1}$ and a $\Delta_{n+1}(A)$-definable
infinite 0--1 tree $\extset T$, we can find $B \subseteq M$ such that $(M,A \oplus B) \models \bd\Sigma^0_{n+1}$
and there is a $\Delta_{n+1}(B)$-definable path in $\extset T$.

Note that $(M, A') \models \bd\Sigma^0_{n}$ and $\extset T$ is $\Delta_n(A')$-definable.
So, by our inductive assumption,
there exists $\extset P \subseteq M$ such that $(M,A' \oplus \extset P) \models \bd\Sigma^0_{n}$
and there is a $\Delta_n(\extset P)$-definable path $\extset G$ in $\extset T$.
By Theorem \ref{thm:belanger-jump-inversion}, there is some $B \subseteq M$
such that $(M, A \oplus B) \models \bd \Sigma^0_{n+1}$ and $\extset P$ is $\Delta_2(B)$-definable.
Thus, $\extset G$ is $\Delta_n(\Delta_2(B))$-definable, and hence
$\Delta_{n+1}(B)$-definable because $\Delta_2(B)$-collection holds.
\end{proof}

It follows immediately from Theorem \ref{thm:wkl-iso} that any two countable
models of $\Delta^0_n$-$\WKL$ that $\bbomega$-extend
the same model of $\bd \Sigma^0_n + \neg \ind \Sigma^0_n$
are to some degree similar.

\begin{corollary}
Let $n \ge 1$, and let $(M, \X)$ and $(M,\W)$ be countable models of $\RCA^*_0 + \Delta^0_n$-$\WKL$
such that $(M,\X \cap \W) \models \neg \ind \Sigma^0_n$.
Let $\overline{c}$ be a tuple of elements of $M$ and $\overline{C}$ be a tuple of subsets of $M$ that are $\Delta^0_n$-definable in both $\X$ and $\W$. Then there exists an isomorphism $h$ between $(M, \Delta^0_n\textrm{-}\mathrm{Def}(M,\X))$ and $(M, \Delta^0_n\textrm{-}\mathrm{Def}(M,\W))$ such that $h(\overline{c})=\overline{c}$ and $h(\overline{C})=\overline{C}$.
\end{corollary}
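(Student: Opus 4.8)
The plan is to deduce the statement from Theorem~\ref{thm:wkl-iso}, applied to the structures $(M, \Delta^0_n\text{-}\mathrm{Def}(M,\X))$ and $(M, \Delta^0_n\text{-}\mathrm{Def}(M,\W))$. First I would check that these are countable models of $\WKL^*_0$. Countability is clear, since $M$ is countable and each $\Delta^0_n$-set is named by a pair consisting of a formula and a finite tuple of parameters from a countable family. By Proposition~\ref{prop:n-wkl-bd}, $\Delta^0_n$-$\WKL$ implies $\bd\Sigma^0_n$ over $\RCA^*_0$, so $(M,\X)$ and $(M,\W)$ both satisfy $\bd\Sigma^0_n + \exp$, and hence, by the fact recalled in Section~\ref{sec:prelim}, $(M, \Delta^0_n\text{-}\mathrm{Def}(M,\X))$ and $(M, \Delta^0_n\text{-}\mathrm{Def}(M,\W))$ are models of $\RCA^*_0$. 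To see that they satisfy $\WKL$, note that an infinite $0$--$1$ tree of $(M, \Delta^0_n\text{-}\mathrm{Def}(M,\X))$ is a $\Delta^0_n$-set $\extset T$ of $(M,\X)$ which $(M,\X)$ likewise regards as an infinite $0$--$1$ tree, the defining property being arithmetical and therefore absolute between second-order structures over $M$. Then $\Delta^0_n$-$\WKL$ in $(M,\X)$ yields a $\Delta^0_n$-set that is an infinite path through $\extset T$; this set belongs to $\Delta^0_n\text{-}\mathrm{Def}(M,\X)$, and by absoluteness it is a path through $\extset T$ there as well. The same argument applies to $\W$.

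Next I would verify the two remaining hypotheses of Theorem~\ref{thm:wkl-iso}: that $(M, \Delta^0_n\text{-}\mathrm{Def}(M,\X) \cap \Delta^0_n\text{-}\mathrm{Def}(M,\W)) \models \neg\ind\Sigma^0_1$, and that $\overline C$ consists of elements of this intersection. The second point is precisely the assumption that each component of $\overline C$ is $\Delta^0_n$-definable in both $\X$ and $\W$. For the first, I would pass through the family $\Delta^0_n\text{-}\mathrm{Def}(M,\X\cap\W)$. The structure $(M,\X\cap\W)$ is a model of $\RCA^*_0$ (closure under $\Delta^0_1$-comprehension is inherited from $\X$ and $\W$) and of $\bd\Sigma^0_n$ (every instance, which uses parameters from $\X\cap\W\subseteq\X$, is valid already in $(M,\X)$); by hypothesis it also satisfies $\neg\ind\Sigma^0_n$. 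Hence, as in the opening discussion of Section~\ref{sec:generalization} and the proof of Proposition~\ref{prop:neg-isn-aca0}, $(M, \Delta^0_n\text{-}\mathrm{Def}(M,\X\cap\W)) \models \RCA^*_0 + \neg\ind\Sigma^0_1$: a $\Sigma_n(A)$-definable proper cut, where $A\in\X\cap\W$ is the join of the parameters in a witness of $\neg\ind\Sigma^0_n$, is $\Sigma_1(A^{(n-1)})$-definable by $\bd\Sigma^0_n$, and $A^{(n-1)}$, being $\Sigma_{n-1}(A)$-definable, lies in $\Delta^0_n\text{-}\mathrm{Def}(M,\X\cap\W)$. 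Now $\Delta^0_n\text{-}\mathrm{Def}(M,\X\cap\W)\subseteq\Delta^0_n\text{-}\mathrm{Def}(M,\X)\cap\Delta^0_n\text{-}\mathrm{Def}(M,\W)$, because a set that is $\Delta_n(Y)$-definable for some $Y\in\X\cap\W$ is $\Delta_n(Y)$-definable in each of $\X$ and $\W$; and the failure of $\Sigma^0_1$-induction persists under enlargement of the second-order universe, since a $\Sigma^0_1$-definable cut defined from a given parameter remains such in any larger family. Therefore $(M, \Delta^0_n\text{-}\mathrm{Def}(M,\X)\cap\Delta^0_n\text{-}\mathrm{Def}(M,\W))\models\neg\ind\Sigma^0_1$.

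With every hypothesis established, Theorem~\ref{thm:wkl-iso}, applied to $(M, \Delta^0_n\text{-}\mathrm{Def}(M,\X))$ and $(M, \Delta^0_n\text{-}\mathrm{Def}(M,\W))$ with the tuples $\overline c$ and $\overline C$, yields an isomorphism $h$ between $(M, \Delta^0_n\text{-}\mathrm{Def}(M,\X))$ and $(M, \Delta^0_n\text{-}\mathrm{Def}(M,\W))$ with $h(\overline c)=\overline c$ and $h(\overline C)=\overline C$, as required.

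I expect the one genuinely delicate step to be the verification that the two families of $\Delta^0_n$-sets share a witness for the failure of $\Sigma^0_1$-induction --- that is, the detour through $\Delta^0_n\text{-}\mathrm{Def}(M,\X\cap\W)$ together with the remark that jumps, and more generally $\Delta^0_n$-definitions from a parameter lying in $\X\cap\W$, do not depend on which of the two ambient second-order universes one works in. The remaining steps are routine bookkeeping about absoluteness of arithmetical formulas and the already-recorded behaviour of $\Delta^0_n$-definable sets under $\bd\Sigma^0_n$.
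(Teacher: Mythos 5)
Your proposal is correct and follows exactly the paper's route: the paper's entire proof is ``apply Theorem~\ref{thm:wkl-iso} to $(M, \Delta^0_n\textrm{-}\mathrm{Def}(M,\X))$ and $(M, \Delta^0_n\textrm{-}\mathrm{Def}(M,\W))$'', and you carry out precisely this application, spelling out the hypothesis checks (via Proposition~\ref{prop:n-wkl-bd}, absoluteness of arithmetical formulas, and the translation of a $\Sigma_n(A)$-cut into a $\Sigma_1(A^{(n-1)})$-cut) that the paper leaves implicit from the discussion at the start of Section~\ref{sec:generalization}.
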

\begin{proof}
Apply Theorem \ref{thm:wkl-iso} to $(M, \Delta^0_n\textrm{-}\mathrm{Def}(M,\X))$ and $(M, \Delta^0_n\textrm{-}\mathrm{Def}(M,\W))$.
\end{proof}

We introduce an auxiliary piece of notation.

\begin{definition}
Given $n\ge 0$ and a set $A$, we write $X\ll^{n}_{A} Y$ for the statement
``for every $\Delta_n(X\oplus A)$-set $\extset T$ that is an infinite 0--1 tree,
there exists a $\Delta_n({Y\oplus A})$-set $\extset W$ that is an infinite path in $\extset T$''.
\end{definition}
The choice of the $\ll$ symbol is inspired by the computability-theoretic notation
$X \ll Y$, which means that $Y$ has PA-degree relative to $X$, that is,
every $X$-computable infinite 0--1 tree has a $Y$-computable path.

\begin{lemma}\label{lem:ll-facts}
For each $n \ge 1$, $\RCA^{*}_{0}$ proves that:
\begin{enumerate}[(a)]
\item\label{pt:ll:B} for any sets $X,Y,A$, if $X \ll^{n}_{A} Y$, then $\bd \Sigma_{n}{(X\oplus A)}$ holds,
\item\label{pt:ll:cod} for any sets $X,Y,A$: $X \ll^{n}_{A} Y$ holds
if and only if
there exists a $\Delta_{n}({Y\oplus A})$-set that codes an $\bbomega$-model of $\WKL^*_0$ containing $(X\oplus A)^{\leszek{(}n-1\leszek{)}}$,
\item\label{pt:ll:WKL} $\Delta^0_{n}$-$\WKL$ is equivalent to $\forall X\,\forall Z\, \exists Y\, X \ll^{n}_{Z} Y$.
\end{enumerate}
\end{lemma}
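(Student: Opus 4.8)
The plan is to derive all three parts from the interplay between $\Delta_n$-definability relative to $X\oplus A$ and $\Delta_1$-definability relative to the jump $(X\oplus A)^{(n-1)}$, combined with the relativizations of Lemma~\ref{lem:wkl-coded} and Proposition~\ref{prop:n-wkl-bd}. The facts I would use repeatedly are: over $\RCA^*_0$ every $\Delta_n(V)$-set is $\Delta_1(V^{(n-1)})$-definable (this uses only that the jump exists, which follows from $\ind\Delta_0+\exp$); conversely, once $\bd\Sigma_n(V)$ is available, every $\Delta_1(V^{(n-1)})$-set is a $\Delta_n(V)$-set; and the construction of the 0--1 tree $T$ in Lemma~\ref{lem:wkl-coded} relative to a parameter $B$ only requires $\ind\Delta_0(B)+\exp$, after which $T$ is $\Delta_1(B)$-definable and any infinite path $W$ through it codes an $\bbomega$-model of $\WKL^*_0$ with $W_0=B$. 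In the relativizations below $B$ will be $(X\oplus A)^{(n-1)}$, and the required induction is available because $\bd\Sigma_n(X\oplus A)$ entails $\ind\Sigma_{n-1}(X\oplus A)$, hence induction for $\Delta_0((X\oplus A)^{(n-1)})$ formulas.

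For part~(a): the path guaranteed by $X\ll^n_A Y$ witnesses the relativization to $X\oplus A$ of the hypothesis of Proposition~\ref{prop:n-wkl-bd}. Given a $\Delta_n(X\oplus A)$-set that is an infinite 0--1 tree, take the $\Delta_n(Y\oplus A)$-path it has; for each level $k$ the restriction of this path to length $k$ is a node of the tree with infinitely many tree-nodes above it. Proposition~\ref{prop:n-wkl-bd}, relativized to $X\oplus A$, then yields $\bd\Sigma_n(X\oplus A)$.

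For part~(b), write $V=X\oplus A$. For the forward direction, part~(a) gives $\bd\Sigma_n(V)$, so the $\Delta_1(V^{(n-1)})$-definable tree $T$ from the relativization of Lemma~\ref{lem:wkl-coded} to the parameter $V^{(n-1)}$ is a $\Delta_n(V)$-set that is an infinite 0--1 tree; applying $X\ll^n_A Y$ to $T$ yields a $\Delta_n(Y\oplus A)$-definable path $W$, and by Lemma~\ref{lem:wkl-coded} this $W$ codes an $\bbomega$-model of $\WKL^*_0$ with $W_0=V^{(n-1)}$. For the converse, suppose $W$ is a $\Delta_n(Y\oplus A)$-set coding an $\bbomega$-model $(M,\W)\models\WKL^*_0$ with $V^{(n-1)}\in\W$. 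Any $\Delta_n(V)$-set $\extset T$ that is an infinite 0--1 tree is $\Delta_1(V^{(n-1)})$-definable, hence by $\Delta^0_1$-comprehension inside $(M,\W)$ there is a member of $\W$ with the same elements as $\extset T$; $\WKL$ in $(M,\W)$ then produces a path $W^*\in\W$, and $W^*=W_k$ for some $k$, which is $\Delta_n(Y\oplus A)$-definable, so $X\ll^n_A Y$ holds. (For $n=1$ the jump is trivial and one invokes Lemma~\ref{lem:wkl-coded} directly.)

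For part~(c), one direction specializes part~(b): assuming $\Delta^0_n$-$\WKL$, Proposition~\ref{prop:n-wkl-bd} gives $\bd\Sigma^0_n$, hence $\bd\Sigma_n(X\oplus Z)$ for all sets $X,Z$; fixing $X,Z$ and setting $V=X\oplus Z$, the tree $T$ from Lemma~\ref{lem:wkl-coded} relative to $V^{(n-1)}$ is a $\Delta_n(V)$-set, so a $\Delta^0_n$-set, and $\Delta^0_n$-$\WKL$ provides a $\Delta^0_n$-path $\extset W$ through it, say $\Delta_n(U)$-definable for a set $U$; then $Y:=U$ makes $\extset W$ a $\Delta_n(Y\oplus Z)$-set coding an $\bbomega$-model of $\WKL^*_0$ containing $(X\oplus Z)^{(n-1)}$, so part~(b) gives $X\ll^n_Z Y$. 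For the other direction, a $\Delta^0_n$-set $\extset T$ that is an infinite 0--1 tree is $\Delta_n(V)$ for a single set $V$ obtained by joining its parameters, and applying $\forall X\,\forall Z\,\exists Y\,X\ll^n_Z Y$ with $X:=\emptyset$ and $Z:=V$ gives a $Y$ with $\emptyset\ll^n_V Y$, whence $\extset T$ has a $\Delta_n(Y\oplus V)$-definable --- so $\Delta^0_n$-definable --- path. I expect the only real care needed anywhere to be bookkeeping: making sure $\bd\Sigma_n$ relative to the correct set is in hand before invoking the ``$\Delta_1$-of-the-jump $\Rightarrow\Delta_n$'' direction, and checking that $(X\oplus A)^{(n-1)}$ carries enough induction for the relativized Lemma~\ref{lem:wkl-coded}. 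There is no circularity: part~(b) uses only part~(a), and part~(c) uses parts~(a) and~(b) together with Lemma~\ref{lem:wkl-coded} and Proposition~\ref{prop:n-wkl-bd}.
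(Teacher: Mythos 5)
Your proposal is correct and follows essentially the same route as the paper: part (a) via the argument of Proposition~\ref{prop:n-wkl-bd} applied to the level-$k$ restrictions of the given path, part (b) via Lemma~\ref{lem:wkl-coded} together with the translation between $\Delta_n(X\oplus A)$-definability and $\Delta_1((X\oplus A)^{(n-1)})$-definability, and part (c) by combining these with Proposition~\ref{prop:n-wkl-bd}. The only (cosmetic) difference is that you invoke Lemma~\ref{lem:wkl-coded} relativized directly to the parameter $(X\oplus A)^{(n-1)}$ with explicit induction bookkeeping, whereas the paper applies it inside the model of $\RCA^*_0$ formed by the $\Delta_n(X\oplus A)$-sets, which comes to the same thing.
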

\begin{proof}
To prove \ref{pt:ll:B}, note that if $X \ll^{n}_{A} Y$, then in particular for every number $k$ and every $\Delta_{n}(X\oplus A)$-set that is an infinite 0--1 tree, there is a node at level $k$ with infinitely many nodes above it. By the argument from the proof of Proposition~\ref{prop:n-wkl-bd}, this implies $\bd \Sigma_{n}{(X\oplus A)}$.

We turn to \ref{pt:ll:cod}. First assume that there is a $\Delta_{n}({Y\oplus A})$-set $\extset W$ coding an $\bbomega$-model $\extset \W$ of $\WKL^*_0$ such that $(X\oplus A)^{\leszek{(}n-1\leszek{)}} \in \extset \W$. Clearly, every element of $\extset \W$ is a $\Delta_{n}({Y\oplus A})$-set. Moreover, every $\Delta_n(X \oplus A)$-set belongs to $\extset \W$, because $\extset \W$ contains $(X\oplus A)^{\leszek{(}n-1\leszek{)}}$
and is closed under $\Delta^0_1$-comprehension.
If the $\Delta_n(X \oplus A)$-set happens to be an infinite 0--1 tree, it will have an infinite path belonging to $\leszek{\extset \W}$.
So, $X \ll^{n}_{A} Y$ holds. In the other direction, if $X \ll^{n}_{A} Y$ holds, then by \ref{pt:ll:B} the
$\Delta_{n}({X\oplus A})$-sets form a model of $\RCA^*_0$, so by Lemma~\ref{lem:wkl-coded} there is
a single infinite $\Delta_{n}({X\oplus A})$-definable 0--1 tree $\extset T$ such that any $\extset W$
that is a path in $\extset T$
codes an $\bbomega$-model of $\WKL^*_0$ containing $(X\oplus A)^{\leszek{(}n-1\leszek{)}}$.
Since  $X \ll^{n}_{A} Y$, some such $\extset W$ is a $\Delta_{n}({Y\oplus A})$-set.

In the proof of \ref{pt:ll:WKL}, the right-to-left direction is immediate. In the other direction, assuming $\Delta^0_{n}$-$\WKL$
we get $\bd \Sigma^0_n$ by Proposition \ref{prop:n-wkl-bd}. So, by Lemma~\ref{lem:wkl-coded} again, given sets $X$ and $Z$ there is a single infinite $\Delta_{n}({X \oplus Z})$-definable 0--1 tree $\extset T$ such that any path in $\extset T$
codes an $\bbomega$-model of $\WKL^*_0$ containing $(X\oplus Z)^{\leszek{(}n-1\leszek{)}}$. By $\Delta^0_{n}$-$\WKL$, there
is some $Y$ for which there is a $\Delta_n(Y)$-definable path in $\extset T$. Then $X \ll^{n}_{Z} Y$
holds for any such $Y$.
\end{proof}


\begin{definition}[$\ll^{n}_{A}$-basis theorem]
Let $n \ge 1$ and let $\psi$ be a $\Pi^1_2$ sentence of
the form $\forall X \, \exists Y \, \alpha(X,Y)$ where $\alpha$ is arithmetical.
For a given set $A$, the \emph{$\ll^{n}_{A}$-basis theorem for $\psi$} is the following statement:
\begin{quote}
 for any sets $Z$ and $X$, if $X\ll_{A}^{n}Z$ then there exists a $\Delta_{n}({Z\oplus A})$-set $\extset Y$ such that $\alpha(X,\extset Y)$ and $X\oplus \extset Y\ll_{A}^{n} Z$.
\end{quote}
\end{definition}


The following result can be viewed as a generalization of Lemma \ref{lem:criterion-pi11-cons-over-not-is1}.

\begin{theorem}\label{lem:criterion-pi11-cons-over-bd-not-ind}
Let $n \ge 1$ and let $\psi$ be a $\Pi^1_2$ sentence of
the form $\forall X \, \exists Y \, \alpha(X,Y)$ where $\alpha$ is arithmetical.
Then $\psi$ is $\Pi^1_1$-conservative over $\RCA^*_0 + \bd\Sigma^0_n + \neg \ind \Sigma^0_n$
if and only if $\RCA^*_0 + \bd \Sigma^0_n$ proves the following $\Pi^1_1$ sentence $\gamma^{n}_{\psi}$:
\begin{quote}
``for every set $A$, if $\ind \Sigma_{n}(A)$ does not hold, then the $\ll^{n}_{A}$-basis theorem for $\psi$ holds''.
\end{quote}
\end{theorem}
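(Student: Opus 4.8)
The plan is to mimic the proof of Lemma \ref{lem:criterion-pi11-cons-over-not-is1}, but now working inside $\Delta^0_n$-definable sets rather than with the model itself, and tracking the set $A$ witnessing the failure of $\ind\Sigma_n$. For the left-to-right direction, suppose $\psi = \forall X\,\exists Y\,\alpha(X,Y)$ is $\Pi^1_1$-conservative over $\RCA^*_0 + \bd\Sigma^0_n + \neg\ind\Sigma^0_n$. First I would argue that $\gamma^n_\psi$ follows from $\RCA^*_0 + \bd\Sigma^0_n + \neg\ind\Sigma^0_n$ itself; since $\gamma^n_\psi$ is $\Pi^1_1$ and is vacuously true when $\ind\Sigma^0_n$ holds (there is then no $A$ with $\neg\ind\Sigma_n(A)$), $\Pi^1_1$-conservativity will then push it down to $\RCA^*_0 + \bd\Sigma^0_n$. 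So the core task is: given a model $(M,\X) \models \RCA^*_0 + \bd\Sigma^0_n + \neg\ind\Sigma^0_n$, a set $A$ with $\neg\ind\Sigma_n(A)$, and sets $Z,X$ with $X \ll^n_A Z$, produce a $\Delta_n(Z\oplus A)$-set $\extset Y$ with $\alpha(X,\extset Y)$ and $X\oplus\extset Y \ll^n_A Z$. By Lemma \ref{lem:ll-facts}\ref{pt:ll:cod}, $X \ll^n_A Z$ gives a $\Delta_n(Z\oplus A)$-set coding an $\bbomega$-model $\W$ of $\WKL^*_0$ containing $(X\oplus A)^{(n-1)}$; note $\W$ consists of $\Delta_n(Z\oplus A)$-sets, $X \in \W$ since $X \le_\mathrm{T} (X\oplus A)^{(n-1)}$, and $\W \models \neg\ind\Sigma^0_1$ because $\neg\ind\Sigma_n(A)$ yields a $\Sigma_n(A)$-cut, which is a $\Sigma^0_1$-cut inside $\W$ (as $A^{(n-1)} \in \W$). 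Now I would invoke the $\Pi^1_1$-conservativity hypothesis via \cite{simpson-smith}: passing to a suitable $\bbomega$-extension and union-of-chains argument, I may assume $(M,\X)$ has been replaced by one in which $\psi$ holds after $\bbomega$-extending while keeping $A$ as a witness for $\neg\ind\Sigma_n$ — more precisely, I want a coded $\bbomega$-model $\W' \supseteq \W$ of $\WKL^*_0 + \neg\ind\Sigma^0_1$ inside which $\psi$ (hence $\exists Y\,\alpha(X,Y)$) holds. The witness $Y^* \in \W'$ then satisfies $\alpha(X,Y^*)$, but $Y^*$ need not be $\Delta_n(Z\oplus A)$, only $\Delta_n$ in some larger $\W'$-parameter.

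The key move to fix this is Corollary \ref{cor:wkl-elem-equiv} (the elementary-equivalence consequence of Theorem \ref{thm:wkl-iso}): since $\W$ and $\W'$ are models of $\WKL^*_0$ with $\W \subseteq \W'$ and $\W \cap \W' = \W \models \neg\ind\Sigma^0_1$, in fact $\W \preccurlyeq \W'$ for all $\mathcal{L}_2$-formulas, with the tuple $X$ (and the code of $\W$ itself, and $(X\oplus A)^{(n-1)}$) fixed. Hence $\W \models \exists Y\,\alpha(X,Y)$, so there is $\extset Y \in \W$ — i.e.\ a $\Delta_n(Z\oplus A)$-set — with $\alpha(X,\extset Y)$. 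Finally, $X\oplus\extset Y \ll^n_A Z$: because $\extset Y \in \W$ and $\W \models \WKL^*_0$, the model $\W$ is closed under $\Delta^0_1$-comprehension and contains $(X\oplus\extset Y\oplus A)^{(n-1)}$ (this is $\Delta_n$ in the parameters available in $\W$, which are all $\Delta_n(Z\oplus A)$), so $\W$ witnesses $X\oplus\extset Y \ll^n_A Z$ via Lemma \ref{lem:ll-facts}\ref{pt:ll:cod} again. For the right-to-left direction, suppose $\RCA^*_0 + \bd\Sigma^0_n$ proves $\gamma^n_\psi$, and let $\xi$ be a $\Sigma^1_1$ sentence consistent with $\RCA^*_0 + \bd\Sigma^0_n + \neg\ind\Sigma^0_n + \neg\psi$; I must derive a contradiction. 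Take a countable model $(M,\X) \models \RCA^*_0 + \bd\Sigma^0_n + \neg\ind\Sigma^0_n + \xi$ with $A$ witnessing $\neg\ind\Sigma_n(A)$; by Lemma \ref{lem:d0n-wkl-cons}, after $\bbomega$-extending I may assume $(M,\X) \models \Delta^0_n$-$\WKL$, so by Lemma \ref{lem:ll-facts}\ref{pt:ll:WKL} and \ref{pt:ll:cod} there are witnesses making every $X$ satisfy $X \ll^n_A Z$ for appropriate $Z$. Since $\xi$ is $\Sigma^1_1$ it is preserved, and since $A$ still witnesses $\neg\ind\Sigma^0_n$, the assumed $\gamma^n_\psi$ says the $\ll^n_A$-basis theorem for $\psi$ holds; iterating the basis step along a back-and-forth enumeration of all $X$'s, I build a $\Delta^0_n$-$\WKL$-type family in which $\psi$ holds — contradicting $\neg\psi$ together with Corollary \ref{cor:analytic-hierarchy-collapses} applied to the $\Delta^0_n$-definable sets (which form a model of $\WKL^*_0 + \neg\ind\Sigma^0_1$).

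I expect the main obstacle to be the right-to-left direction: turning ``the $\ll^n_A$-basis theorem for $\psi$ holds'' into an actual $\bbomega$-extension satisfying $\Delta^0_n$-$\WKL + \psi$ over the same first-order part. The issue is that one application of the basis theorem only handles a single $X$, whereas $\Delta^0_n$-$\WKL$ and $\psi$ must hold for all sets $X$ in the final structure, and the final structure's sets are not known in advance. The remedy is a union-of-chains / diagonalization argument in the style of \cite{yokoyama:conservativity}: enumerate all the relevant instances (of both $\Delta^0_n$-$\WKL$ and of the $\forall X$ in $\psi$), and at each stage use Lemma \ref{lem:ll-facts}\ref{pt:ll:cod} together with the basis step to extend the current $\Delta_n(\cdot\oplus A)$-approximation while preserving ``$\ll^n_A Z$'' as an invariant, taking the union at the end; closure of the invariant under the construction is exactly what the formulation $X\oplus\extset Y \ll^n_A Z$ in the basis theorem is designed to guarantee. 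A secondary subtlety is bookkeeping with the parameter $Z$: it must be chosen once at the start, large enough (e.g.\ coding a suitable $\bbomega$-model) that it remains a valid ``oracle'' for $\ll^n_A$ throughout, and one should check that the relativized jump $(X\oplus A)^{(n-1)}$ behaves correctly — here Proposition \ref{prop:n-wkl-bd} and the closure properties of $\bd\Sigma^0_n$ models do the work, just as in the proof of Lemma \ref{lem:d0n-wkl-cons}.
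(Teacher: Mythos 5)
Your overall architecture matches the paper's (translate $X\ll^n_A Z$ into a coded $\bbomega$-model $\extset\W$ of $\WKL^*_0$ via Lemma \ref{lem:ll-facts}\ref{pt:ll:cod}, transfer a solution of $\psi$ into $\extset\W$ using Corollary \ref{cor:wkl-elem-equiv}, and in the converse direction iterate the basis theorem along a chain), but the decisive step of your left-to-right argument has a genuine gap. After transferring only $\exists Y\,\alpha(X,Y)$ into $\extset\W$, you claim that any witness $\extset Y\in\extset\W$ automatically satisfies $X\oplus\extset Y\ll^n_A Z$, on the grounds that $(X\oplus\extset Y\oplus A)^{(n-1)}$ ``is $\Delta_n$ in the parameters available in $\extset\W$'' and hence belongs to $\extset\W$. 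Neither half of this is right: a coded $\bbomega$-model of $\WKL^*_0$ is closed only under $\Delta^0_1$-comprehension relative to its own members and under taking paths through trees that are among its members, so definability relative to $Z\oplus A$ at any level does not put a set into $\extset\W$; and for $n\ge 2$ the $(n-1)$-jump of $X\oplus\extset Y\oplus A$ is in general only $\Sigma_{n-1}$ relative to a $\Delta_n(Z\oplus A)$-set, not $\Delta_n(Z\oplus A)$ at all. Since $\WKL^*_0$ proves no jump existence, an arbitrary witness of $\alpha(X,\cdot)$ in $\extset\W$ need not satisfy $X\oplus\extset Y\ll^n_A Z$ (your argument is harmless only for $n=1$), and that clause is precisely what makes the basis theorem iterable in the right-to-left direction. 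The paper's fix is to transfer the \emph{augmented} arithmetical statement ``there exists $Y$ with $\alpha(X,Y)$ such that $(X\oplus Y\oplus A)^{(n-1)}$ exists as a set'': this is true in $(M,\Delta^0_n\textrm{-Def}(M,\Y))$ for a $\Delta^0_n$-$\WKL$ extension $(M,\Y)$ of $(M,X\oplus Y\oplus Z\oplus A)$ given by Lemma \ref{lem:d0n-wkl-cons} (there the jump of the actual $\psi$-witness is a $\Delta^0_n$-set), and its truth in $\extset\W$ forces the chosen witness to come together with its relative $(n-1)$-jump inside $\extset\W$, from which $X\oplus\extset Y\ll^n_A Z$ does follow. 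This ``bring the jump along'' move — the single jump control idea — is missing from your proposal.

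Two further points. First, you invoke $\Pi^1_1$-conservativity to $\bbomega$-extend the given model (same $M$, same $A$) to one satisfying $\psi$; that extendability principle is essentially Corollary \ref{cor:omega-ext}, which the paper obtains as a consequence of this very theorem and which fails for conservativity over general theories (see the remark on truth classes following Corollary \ref{cor:omega-ext}), so it cannot be assumed here. The paper sidesteps this by proving directly that $\RCA^*_0+\bd\Sigma^0_n+\psi\vdash\gamma^n_\psi$ (Corollary \ref{cor:proves-gamma}) — i.e.\ working in a model where $\psi$ already holds — and only then applying conservativity, at the level of theories, to the $\Pi^1_1$ sentence $\gamma^n_\psi$, together with the vacuous case under $\ind\Sigma^0_n$; your argument should be reorganized this way. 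Second, the logical setup of your right-to-left direction is off: one takes $\xi$ ($\Sigma^1_1$) consistent with $\RCA^*_0+\bd\Sigma^0_n+\neg\ind\Sigma^0_n$ and shows it remains consistent after adding $\psi$; there is nothing to ``contradict'' with $\neg\psi$, and Corollary \ref{cor:analytic-hierarchy-collapses} plays no role. The constructive core you describe — iterate the basis step, preserving $\bd\Sigma^0_n$ via $X\oplus\extset Y\ll^n_A Z$ and Lemma \ref{lem:ll-facts}\ref{pt:ll:B}, then take a union of the chain — is indeed the paper's argument, but it only works once the basis theorem is established with the $\ll^n_A$ clause intact, which is exactly the point where your left-to-right proof breaks down.
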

We will write $\gamma_\psi$ instead of $\gamma^n_\psi$ whenever $n$ is clear from the context,
including in the proof of Theorem \ref{lem:criterion-pi11-cons-over-bd-not-ind} itself.

\begin{proof}
The left-to-right direction can be proved by an argument that is similar to the one in the proof of Lemma \ref{lem:criterion-pi11-cons-over-not-is1}, but a bit more complicated. We argue in a slightly different way in order to obtain some
additional information (Corollary~\ref{cor:proves-gamma}).


Let $\psi$ be the $\Pi^1_2$ sentence $\forall X \, \exists Y \, \alpha(X,Y)$.
We argue that the $\Pi^1_1$ sentence $\gamma_\psi$ is provable in
$\RCA^*_0 +  \bd \Sigma^0_n + \psi$.
Let $(M,\X)$ be a countable model of that theory.
Let $X,Z,A \in \X$ be such that $\neg\ind\Sigma_{n}(A)$ holds and $X\ll^{n}_{A} Z$.
Since $\psi$ is true in $(M,\X)$, there is a set $Y \in \X$ such that $(M,\X) \models \alpha(X,Y)$.
By Lemma~\ref{lem:d0n-wkl-cons}, we can $\bbomega$-extend 
\leszek{$(M, X\oplus Y \oplus Z \oplus A)$} 
to a structure $(M, \Y)$ satisfying $\Delta^0_n\textrm{-}\WKL$.
Of course, $(X\oplus Y \oplus A)^{(n-1)}$ is a $\Delta^0_n$-set in $(M,\Y)$.

The statement $X\ll^{n}_{A} Z$ is arithmetical in $X,Z,A$, so since it was true in $(M,\X)$,
it also holds in all the other second-order universes considered.
So, Lemma \ref{lem:ll-facts}\ref{pt:ll:cod} implies that
there is a $\Delta_n({Z\oplus A})$-set $\extset W$ coding a model $(M,\extset \W)$
of $\WKL^*_0$ with $(X\oplus A)^{\leszek{(}n-1\leszek{)}} \in \extset \W$.
By $\Delta^0_n\textrm{-}\WKL$ (and Proposition \ref{prop:n-wkl-bd}), we know that $(M,\Delta^0_n\textrm{-Def}(M,\Y))$
is a model of $\WKL^*_0$, and by $\neg\ind\Sigma_{n}(A)$, we have $(M,\Delta^0_n\textrm{-Def}(M,\Y))\models \neg \ind \Sigma^0_1$ and $(M,\extset \W)\models \neg \ind \Sigma^0_1$.

Thus, by Corollary \ref{cor:wkl-elem-equiv},
the statement ``there exists $Y$ such that $\alpha(X,Y)$ and $(X\oplus Y \oplus A)^{(n-1)}$ exists as a set'',
which is true in $(M,\Delta^0_n\textrm{-Def}(M,\Y))$,
must also be true in $(M,\extset \W)$. Let $\extset Y\in\extset \W$ be a witness to the $\exists Y$ quantifier
in that statement.
Then in $(M, X \oplus Z \oplus A)$, and hence also in $(M,\X)$,
it is the case that $\extset Y$ is a $\Delta_{n}({Z\oplus A})$-set and
$\alpha(X,\extset Y)$ holds. Moreover, $X\oplus \extset Y\ll_{A}^{n} Z$,
because each $\Delta_n(X\oplus \extset Y \oplus A)$-set belongs to $\extset \W$
which is an $\bbomega$-model of $\WKL^*_0$ coded by a $\Delta_n(Z \oplus A)$-set (cf.~the proof of the
right-to-left direction of Lemma \ref{lem:ll-facts}\ref{pt:ll:cod}, whose statement might not apply directly
because $\extset Y$ might not be a set in the sense of $(M,\X)$).

Since $X, Z, A$ were arbitrary such that $\neg\ind\Sigma_{n}(A)$ holds and $X\ll^{n}_{A} Z$,
this completes the argument that $\RCA^*_0 + \bd\Sigma^0_n + \psi$
proves $\gamma_\psi$. By $\Pi^1_1$-conservativity, also $\RCA^*_0 + \bd\Sigma^0_n + \neg \ind \Sigma^0_n$
proves $\gamma_\psi$. Of course, $\RCA^*_0 + \ind \Sigma^0_n$ proves $\gamma_\psi$
as well by the definition of $\gamma_{\psi}$.

In the right-to-left direction, assume that $\RCA^*_0 + \bd \Sigma^0_n$ proves $\gamma_\psi$.
By a standard $\omega$-chain argument,
to prove $\Pi^1_1$-conservativity of $\psi$ over $\RCA^*_0 + \bd\Sigma^0_n + \neg \ind \Sigma^0_n$
it is enough to show that for any countable $(M,A) \models \bd\Sigma^0_n + \neg \ind \Sigma_n(A)$
and any $\Delta_1(A)$-set $X$, there exists $\extset Y \subseteq M$
such that $(M,\extset Y\oplus A) \models \bd \Sigma^0_n$ and $\alpha(X,\extset Y)$ holds.
By Lemma \ref{lem:d0n-wkl-cons},
we can extend $(M,A)$ to a model $(M, \X) \models \RCA^*_0 + \Delta^0_n\textrm{-}\WKL$.
By Lemma \ref{lem:ll-facts}\ref{pt:ll:WKL}, we can take some $Z\in\X$ such that 
\leszek{$X\ll^{n}_{A}Z$.} 
It follows from our assumption that $(M,\X) \models \gamma_\psi$, so there is some $\extset Y \subseteq M$
such that $\alpha(X,\extset Y)$ holds and 
\leszek{$X \oplus \extset Y \ll^{n}_{A} Z$.} 
Since $Z\oplus A\in \X$, we have $(M,(Z\oplus A)^{(n-1)}) \models \bd\Sigma^0_1$ and thus $(M, \extset Y\oplus A) \models \bd \Sigma^0_n$, 
which is what we wanted to prove.
\end{proof}

We record that the proof of the left-to-right direction of the theorem actually shows the following.

\begin{corollary}\label{cor:proves-gamma}
Let $n \ge 1$ and let $\psi$ be a $\Pi^1_2$ sentence. Then $\RCA^*_0 + \bd\Sigma^0_n + \psi$
proves $\gamma^n_\psi$, where $\gamma^n_\psi$ is the $\Pi^1_1$ sentence from
Theorem~\ref{lem:criterion-pi11-cons-over-bd-not-ind}.
\end{corollary}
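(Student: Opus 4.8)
The plan is to observe that the corollary is already contained in the proof of Theorem~\ref{lem:criterion-pi11-cons-over-bd-not-ind}: its left-to-right direction establishes $\gamma^n_\psi$ inside $\RCA^*_0+\bd\Sigma^0_n+\psi$ before ever invoking $\Pi^1_1$-conservativity or $\neg\ind\Sigma^0_n$. So I would simply point to the relevant passage. In more detail, the step to recall is the following: for an arbitrary countable $(M,\X)\models\RCA^*_0+\bd\Sigma^0_n+\psi$ and arbitrary $X,Z,A\in\X$ with $\neg\ind\Sigma_n(A)$ and $X\ll^n_A Z$, that proof produces a $\Delta_n(Z\oplus A)$-set $\extset Y$ such that $\alpha(X,\extset Y)$ holds and $X\oplus\extset Y\ll^n_A Z$ --- which is exactly the assertion that $(M,\X)$ satisfies the $\ll^n_A$-basis theorem for $\psi$, and hence $\gamma^n_\psi$. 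The subsequent sentence ``By $\Pi^1_1$-conservativity, also $\RCA^*_0+\bd\Sigma^0_n+\neg\ind\Sigma^0_n$ proves $\gamma_\psi$'' is a separate inference and is not used here.

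The only genuine addition needed is to bridge the mismatch in form: Theorem~\ref{lem:criterion-pi11-cons-over-bd-not-ind} and the definition of $\gamma^n_\psi$ are stated for $\psi$ of the shape $\forall X\,\exists Y\,\alpha(X,Y)$ with $\alpha$ arithmetical, while the corollary allows an arbitrary $\Pi^1_2$ sentence. I would handle this by fixing at the outset a canonical such normal form for $\psi$: writing $\psi$ as $\forall\overline X\,\exists\overline Y\,\theta(\overline X,\overline Y)$ with $\theta$ arithmetical and with $\overline X,\overline Y$ of fixed (standard) finite length, one replaces it by $\forall X\,\exists Y\,\theta'(X,Y)$, where $\theta'$ first decomposes $X$ and $Y$ into their (fixed, finite) lists of $\Delta^0_1$-definable components via iterated joins and then asserts $\theta$. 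Since $\RCA^*_0$ proves that every set splits in this way and $\theta'$ remains arithmetical, we get $\RCA^*_0\vdash\psi\leftrightarrow\forall X\,\exists Y\,\theta'(X,Y)$, and $\gamma^n_\psi$ is then well defined with respect to this normal form.

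I do not expect any real obstacle here: once the normal-form reduction is in place, the argument of the first half of the proof of Theorem~\ref{lem:criterion-pi11-cons-over-bd-not-ind} goes through word for word. The point of stating the corollary on its own is precisely that, unlike Theorem~\ref{lem:criterion-pi11-cons-over-bd-not-ind} itself, it does not presuppose that $\psi$ is $\Pi^1_1$-conservative over $\RCA^*_0+\bd\Sigma^0_n+\neg\ind\Sigma^0_n$, and so remains available in contexts where conservativity is not (yet) known.
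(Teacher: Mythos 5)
Your proposal is exactly the paper's own justification: Corollary~\ref{cor:proves-gamma} is recorded precisely because the left-to-right half of the proof of Theorem~\ref{lem:criterion-pi11-cons-over-bd-not-ind} derives $\gamma^n_\psi$ in $\RCA^*_0+\bd\Sigma^0_n+\psi$ before conservativity or $\neg\ind\Sigma^0_n$ is ever invoked. Your normal-form remark (merging tuples of quantifiers via joins so that $\psi$ has the shape $\forall X\,\exists Y\,\alpha(X,Y)$) is a harmless standard reduction that the paper leaves implicit, so the proposal is correct and essentially identical to the paper's argument.
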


A discussion of what Theorem~\ref{lem:criterion-pi11-cons-over-bd-not-ind} and Corollary \ref{cor:proves-gamma}
say about proving conservativity over collection principles, mostly in the context of Ramsey's theorem
for pairs and two colours, can be found in Section \ref{sec:rt22}.

The following Theorem can be viewed as a generalization of Theorem \ref{thm:pi11-cons-over-not-is1}. Whereas
Theorem \ref{thm:pi11-cons-over-not-is1} says that a $\Pi^1_2$ sentence $\psi$ is $\Pi^1_1$-conservative
over $\RCA^*_0 + \neg \ind \Sigma^0_1$ exactly if it is provable from $\WKL$, the result below
replaces $\WKL$ with $\Delta^0_n\textrm{-}\WKL$ and replaces provability of $\psi$ with the provability of a more complicated $\Pi^1_2$ sentence guaranteeing that the second-order universe can be extended by solutions to instances of $\psi$.
The sentence says that well-behaved but possibly non-set solutions to $\psi$ exist,
and that they can be found uniformly for instances of bounded complexity.

\begin{theorem}\label{thm:pi11-cons-over-coh}
Let $n \ge 1$ and let $\psi$ be a $\Pi^1_2$ sentence of the form $\forall X \, \exists Y \, \alpha(X,Y)$ where $\alpha$ is arithmetical.
Then $\psi$ is $\Pi^1_1$-conservative over $\RCA^*_0 + \bd\Sigma^0_n + \neg \ind \Sigma^0_n$ if and only if
$\RCA^*_0 + \Delta^0_n\textrm{-}\WKL + \neg \ind \Sigma^0_n$ proves the statement:
\begin{equation}\label{eqn:low-psi}
\forall X_{0}\,\exists Y_{0}\,\, \forall X \! \le_{\mathrm{T}} \! X_{0}\,\exists \, \Delta_n(Y_{0})\textrm{-}\mathrm{set}\, \extset Y\, \bigl(\extset Y\oplus X_{0}\ll^{n}_{\emptyset}Y_{0} \land \alpha (X,\extset Y)\bigr).
\end{equation}
\end{theorem}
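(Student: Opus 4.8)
The plan is to deduce the statement from the criterion of Theorem~\ref{lem:criterion-pi11-cons-over-bd-not-ind}: it suffices to prove that $\RCA^*_0+\bd\Sigma^0_n$ proves $\gamma^n_\psi$ if and only if $\RCA^*_0+\Delta^0_n\text{-}\WKL+\neg\ind\Sigma^0_n$ proves~(\ref{eqn:low-psi}). The common device in both directions is to move between ``$\extset Y$ is a $\Delta_n(C)$-definable set'' and ``$\extset Y$ lies in an $\bbomega$-model of $\WKL^*_0$ coded by a $\Delta_n(C)$-definable set'', using Lemma~\ref{lem:ll-facts}\ref{pt:ll:cod} together with the standard fact that a $\Delta_n(C)$-definable set is $\Delta_1(C^{(n-1)})$-definable, so that it automatically belongs to any $\bbomega$-model of $\RCA^*_0$ containing $C^{(n-1)}$. (Here and below I use Lemma~\ref{lem:ll-facts}\ref{pt:ll:cod} also for definable-but-possibly-non-set arguments, exactly as in the proof of Theorem~\ref{lem:criterion-pi11-cons-over-bd-not-ind}.)

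For the direction from $\gamma^n_\psi$ to~(\ref{eqn:low-psi}), I would work inside a model $(M,\X)\models\RCA^*_0+\Delta^0_n\text{-}\WKL+\neg\ind\Sigma^0_n$; this satisfies $\bd\Sigma^0_n$ by Proposition~\ref{prop:n-wkl-bd}, hence $\gamma^n_\psi$. Given $X_0$, I would pick a witness $A_0$ for $\neg\ind\Sigma^0_n$, put $A\defeq A_0\oplus X_0$ (so $\neg\ind\Sigma_n(A)$ and $X_0\le_{\mathrm{T}}A$), and use Lemma~\ref{lem:ll-facts}\ref{pt:ll:WKL} twice to get first a set $Z$ with $A\ll^n_A Z$ and then a genuine set $Y_0$ with $(Z\oplus A)\ll^n_\emptyset Y_0$. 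By Lemma~\ref{lem:ll-facts}\ref{pt:ll:cod}, $Y_0$ codes an $\bbomega$-model $\extset\W_0\models\WKL^*_0$ with $(Z\oplus A)^{(n-1)}\in\extset\W_0$; hence every $\Delta_n(Z\oplus A)$-definable set lies in $\extset\W_0$, and every member of $\extset\W_0$ is $\Delta_n(Y_0)$-definable. For each $X\le_{\mathrm{T}}X_0$ we have $X\ll^n_A Z$ (since $\Delta_n(X\oplus A)=\Delta_n(A)$), so $\gamma^n_\psi$ yields a $\Delta_n(Z\oplus A)$-definable $\extset Y$ with $\alpha(X,\extset Y)$ and $X\oplus\extset Y\ll^n_A Z$. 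Then $\extset Y\in\extset\W_0$ is $\Delta_n(Y_0)$-definable; and Lemma~\ref{lem:ll-facts}\ref{pt:ll:cod} applied to $X\oplus\extset Y\ll^n_A Z$ puts $(X\oplus\extset Y\oplus A)^{(n-1)}$, and therefore also $(\extset Y\oplus X_0)^{(n-1)}$ (as $X_0\le_{\mathrm{T}}A$), in $\extset\W_0$, so every $\Delta_n(\extset Y\oplus X_0)$-definable tree belongs to $\extset\W_0$ and has a $\Delta_n(Y_0)$-definable path there, i.e.\ $\extset Y\oplus X_0\ll^n_\emptyset Y_0$. Thus $Y_0$ witnesses~(\ref{eqn:low-psi}) for $X_0$.

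For the direction from~(\ref{eqn:low-psi}) to $\gamma^n_\psi$, I would take a countable $(M,\X)\models\RCA^*_0+\bd\Sigma^0_n$, sets $A,X,Z$ with $\neg\ind\Sigma_n(A)$ and $X\ll^n_A Z$, and use Lemma~\ref{lem:ll-facts}\ref{pt:ll:cod} to fix an $\bbomega$-model $\extset\W\models\WKL^*_0$ coded by a $\Delta_n(Z\oplus A)$-definable set with $(X\oplus A)^{(n-1)}\in\extset\W$; since $\neg\ind\Sigma_n(A)$ forces $\neg\ind\Sigma_1((X\oplus A)^{(n-1)})$, we get $\extset\W\models\neg\ind\Sigma^0_1$. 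Then I would $\bbomega$-extend the topped model $(M,\Delta^0_1\text{-}\mathrm{Def}(M,X\oplus Z\oplus A))$ to some $(M,\Y)\models\RCA^*_0+\Delta^0_n\text{-}\WKL$ via Lemma~\ref{lem:d0n-wkl-cons}; as $A\in\Y$, also $(M,\Y)\models\neg\ind\Sigma^0_n$, so~(\ref{eqn:low-psi}) holds there. Applying it with $X_0\defeq X\oplus A$ and instantiating the inner quantifier at $X$ gives a set $Y_0\in\Y$ and a $\Delta_n(Y_0)$-definable $\extset Y^*$ with $\alpha(X,\extset Y^*)$ and $\extset Y^*\oplus X\oplus A\ll^n_\emptyset Y_0$; by Lemma~\ref{lem:ll-facts}\ref{pt:ll:cod} this last fact yields an $\bbomega$-model $\extset\W'\models\WKL^*_0$ coded by a $\Delta_n(Y_0)$-definable set with $(X\oplus\extset Y^*\oplus A)^{(n-1)}\in\extset\W'$, and $\extset\W'$ contains $X$, $A$, $(X\oplus A)^{(n-1)}$, $\extset Y^*$ and satisfies $\neg\ind\Sigma^0_1$. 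Now I would invoke Corollary~\ref{cor:wkl-elem-equiv} for $(M,\extset\W')$ and $(M,\extset\W)$ with parameter tuple $(X,A,(X\oplus A)^{(n-1)})$, which lies in $\extset\W\cap\extset\W'\models\neg\ind\Sigma^0_1$: the $\mathcal{L}_2$-statement ``there is $Y$ with $\alpha(X,Y)$ and with $(X\oplus Y\oplus A)^{(n-1)}$ a set'', true in $\extset\W'$ via $\extset Y^*$, is then true in $\extset\W$. A witness $\extset Y\in\extset\W$ is $\Delta_n(Z\oplus A)$-definable, satisfies $\alpha(X,\extset Y)$, and, since $(X\oplus\extset Y\oplus A)^{(n-1)}\in\extset\W$, satisfies $X\oplus\extset Y\ll^n_A Z$ by Lemma~\ref{lem:ll-facts}\ref{pt:ll:cod} again; as all of this is arithmetical in $X,Z,A$, it transfers back to $(M,\X)$, which is precisely the conclusion of the $\ll^n_A$-basis theorem for $\psi$, and hence $\gamma^n_\psi$ follows.

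The step I expect to be the main obstacle is the quantifier-complexity bookkeeping around jumps: a $\Delta_n$-set over a $\Delta_n$-set need not be $\Delta_n$ over the original parameter, and a genuine jump set need not exist in a model of $\Delta^0_n\text{-}\WKL$. I expect both difficulties to be handled uniformly by always passing through coded $\bbomega$-models of $\WKL^*_0$ whose base is an $(n-1)$-jump: such a model is coded by a genuine $\Delta_n$-definable set, its members are exactly the sets $\Delta_n$-definable in its code, and it contains every $\Delta_n$-set over the base, so ``$\Delta_n$-definable over the base'' collapses to plain membership. The requirement in~(\ref{eqn:low-psi}) that $Y_0$ be a genuine set is precisely what forces the second application of Lemma~\ref{lem:ll-facts}\ref{pt:ll:WKL} in the first direction, rather than simply taking $Y_0=(Z\oplus A)^{(n-1)}$.
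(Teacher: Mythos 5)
Your proposal is essentially correct, but it takes a noticeably different (and heavier) route than the paper in both directions, and one step needs more justification than you give it. For the direction from $\gamma^n_\psi$ to (\ref{eqn:low-psi}), the paper applies Lemma~\ref{lem:ll-facts}\ref{pt:ll:WKL} only once, taking $Y_0'$ with $X_0\ll^n_{X_0\oplus A}Y_0'$ and witnessing the outer existential quantifier by the join $X_0\oplus Y_0'\oplus A$; then $\extset Y\oplus X\ll^n_{X_0\oplus A}Y_0'$, which is already part of the basis-theorem conclusion, gives $\extset Y\oplus X_0\ll^n_\emptyset X_0\oplus Y_0'\oplus A$ by plain monotonicity of $\ll$ in its parameters, with no coded model needed. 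Your insistence on a ``bare'' $Y_0$ with $Z\oplus A\ll^n_\emptyset Y_0$ forces you to verify $\extset Y\oplus X_0\ll^n_\emptyset Y_0$ through $\extset\W_0$, and the load-bearing step there --- passing from $X\oplus\extset Y\ll^n_A Z$ to $(X\oplus\extset Y\oplus A)^{(n-1)}$ being $\Delta_n(Z\oplus A)$-definable --- is the \emph{left-to-right} direction of Lemma~\ref{lem:ll-facts}\ref{pt:ll:cod} with a definable class in place of the set $\extset Y$. That is true, but it is not what the parenthetical in the proof of Theorem~\ref{lem:criterion-pi11-cons-over-bd-not-ind} covers (there only the easy right-to-left direction is used for classes); you would need to note that Proposition~\ref{prop:n-wkl-bd}, Lemma~\ref{lem:ll-facts}\ref{pt:ll:B} and Lemma~\ref{lem:wkl-coded} relativize to $\Delta_n(Z\oplus A)$-definable class parameters. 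For the converse direction the paper is much more economical than your detour through $\gamma^n_\psi$: it proves $\Pi^1_1$-conservativity directly by the same $\omega$-chain argument as in Theorem~\ref{lem:criterion-pi11-cons-over-bd-not-ind}, extending a countable topped model of $\bd\Sigma^0_n+\neg\ind\Sigma^0_n$ to a model of $\Delta^0_n\textrm{-}\WKL$ via Lemma~\ref{lem:d0n-wkl-cons}, applying (\ref{eqn:low-psi}) there, and observing that $\extset Y\oplus X_0\ll^n_\emptyset Y_0$ already yields $\bd\Sigma_n(\extset Y\oplus A)$ by Lemma~\ref{lem:ll-facts}\ref{pt:ll:B} --- so no second use of Corollary~\ref{cor:wkl-elem-equiv} is needed, whereas your derivation of $\gamma^n_\psi$ from (\ref{eqn:low-psi}) essentially re-runs the hard, isomorphism-based direction of Theorem~\ref{lem:criterion-pi11-cons-over-bd-not-ind}. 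What your route buys is the explicit provability equivalence between $\gamma^n_\psi$ over $\RCA^*_0+\bd\Sigma^0_n$ and (\ref{eqn:low-psi}) over $\RCA^*_0+\Delta^0_n\textrm{-}\WKL+\neg\ind\Sigma^0_n$, which is mildly interesting in itself; what the paper's route buys is brevity and the fact that the right-to-left direction uses nothing beyond Lemmas~\ref{lem:ll-facts} and~\ref{lem:d0n-wkl-cons}.
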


\begin{proof}
Let $\psi$ be $\Pi^1_2$ of the form $\forall X \, \exists Y \, \alpha(X,Y)$.

For the left-to-right direction, assume that $\psi$ is
$\Pi^1_1$-conservative over $\RCA^*_0 + \bd\Sigma^0_n + \neg \ind \Sigma^0_n$,
so $\RCA^*_0 + \bd \Sigma^0_n$ proves the sentence $\gamma_\psi$
from Theorem \ref{lem:criterion-pi11-cons-over-bd-not-ind}.
We argue within $\RCA^*_0 + \Delta^0_n\textrm{-}\WKL + \neg \ind \Sigma^0_n$
and fix a set $A$ satisfying $\neg\ind\Sigma_{n}({A})$.
Consider any set $X_{0}$.
By Lemma \ref{lem:ll-facts}\ref{pt:ll:WKL}, there is $Y_{0}$ such that {$X_{0} \ll^{n}_{X_0 \oplus A}Y_{0}$}.
Naturally, this implies $X \ll^{n}_{X_0 \oplus A}Y_{0}$ for every $X \! \le_{\mathrm{T}} \! X_{0}$ as well.
We have $\neg \ind \Sigma_n(X_0 \oplus A)$,
so we know by $\gamma_{\psi}$ that the $\ll^{n}_{X_0 \oplus A}$-basis theorem for $\psi$ holds.
Therefore, for every $X \! \le_{\mathrm{T}} \! X_{0}$
there exists a $\Delta_n(X_0 \oplus Y_0 \oplus A)$-set $\extset Y$ such that
$\alpha(X, \extset Y)$ and $\extset Y \oplus X \ll^{n}_{X_0 \oplus A} Y_0$;
the latter implies $\extset Y \oplus X_0 \ll^{n}_{\emptyset} X_0 \oplus Y_0 \oplus A$.
This proves (\ref{eqn:low-psi}) with the outermost $\exists$ quantifier witnessed by $X_0 \oplus Y_0 \oplus A$.

The proof of the right-to-left direction is just like the one in Theorem~\ref{lem:criterion-pi11-cons-over-bd-not-ind}.
%
\end{proof}

The first of the two corollaries below follows directly from either Theorem~\ref{lem:criterion-pi11-cons-over-bd-not-ind} or Theorem \ref{thm:pi11-cons-over-coh}. The second follows from their proofs in the right-to-left direction.

\begin{corollary}\label{cor:rec-axiom}
For each $n \ge 1$, the set of $\Pi^1_2$ sentences which are $\Pi^1_1$-conservative over
$\RCA^*_0 + \bd \Sigma^0_n + \neg \ind \Sigma^0_n$ is \leszek{c.e.~}
Thus, it is computably axiomatizable.
\end{corollary}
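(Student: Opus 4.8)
The plan is to deduce this directly from Theorem~\ref{lem:criterion-pi11-cons-over-bd-not-ind}, using only that the constructions entering that characterization are effective. Fix $n \ge 1$ throughout. First I would reduce to the normal form assumed in Theorem~\ref{lem:criterion-pi11-cons-over-bd-not-ind}: every $\Pi^1_2$ sentence, having all its second-order quantifiers in front in the pattern $\forall\cdots\forall\exists\cdots\exists$ followed by an arithmetical matrix, can be rewritten --- by collapsing the block of universal second-order quantifiers into a single variable $X$ and the block of existential ones into a single variable $Y$ via $\Delta_0$ join-coding, available in $\RCA^*_0$ --- into an equivalent sentence $\psi^\sharp$ of the form $\forall X\,\exists Y\,\alpha(X,Y)$ with $\alpha$ arithmetical. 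The map $\psi \mapsto \psi^\sharp$ is computable, and since $\RCA^*_0$ proves $\psi \leftrightarrow \psi^\sharp$, the sentence $\psi$ is $\Pi^1_1$-conservative over $\RCA^*_0 + \bd\Sigma^0_n + \neg\ind\Sigma^0_n$ iff $\psi^\sharp$ is. So it suffices to show that the set of normal-form $\Pi^1_2$ sentences that are $\Pi^1_1$-conservative over that theory is c.e.

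Next I would observe that, for our fixed $n$, the assignment sending a normal-form sentence $\psi = \forall X\,\exists Y\,\alpha(X,Y)$ to the $\Pi^1_1$ sentence $\gamma^n_\psi$ from Theorem~\ref{lem:criterion-pi11-cons-over-bd-not-ind} is computable: unfolding the definitions of $X \ll^n_A Y$ and of the $\ll^n_A$-basis theorem for $\psi$, the sentence $\gamma^n_\psi$ arises from a fixed template into which $\alpha$ is substituted in the single place where $\psi$ occurs, so a G\"odel number for $\gamma^n_\psi$ can be computed from one for $\psi$. By Theorem~\ref{lem:criterion-pi11-cons-over-bd-not-ind}, a normal-form $\psi$ is $\Pi^1_1$-conservative over $\RCA^*_0 + \bd\Sigma^0_n + \neg\ind\Sigma^0_n$ precisely when $\RCA^*_0 + \bd\Sigma^0_n \vdash \gamma^n_\psi$. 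The theory $\RCA^*_0 + \bd\Sigma^0_n$ is computably axiomatized, so its set of theorems is c.e.; precomposing with the computable map $\psi \mapsto \gamma^n_\psi$ shows that the set of conservative normal-form sentences is c.e., and therefore, by the previous paragraph, so is the set of all $\Pi^1_2$ sentences that are $\Pi^1_1$-conservative over $\RCA^*_0 + \bd\Sigma^0_n + \neg\ind\Sigma^0_n$. Finally, a c.e.\ set of sentences can be replaced by a computable one with the same deductive closure via Craig's trick, which gives the ``computably axiomatizable'' clause.

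There is essentially no obstacle here; the mathematical content is carried entirely by Theorem~\ref{lem:criterion-pi11-cons-over-bd-not-ind}. The only point requiring a little care is the claim that $\gamma^n_\psi$ depends effectively on $\psi$, which amounts to checking that the definitions of Section~\ref{sec:generalization} feeding into $\gamma^n_\psi$ --- $\Delta^0_n$-$\WKL$, $\ll^n_A$, and the $\ll^n_A$-basis theorem --- are uniform schemes in which $\alpha$ is the only moving part; this is routine. One could run the same argument verbatim using Theorem~\ref{thm:pi11-cons-over-coh} and the sentence (\ref{eqn:low-psi}) in place of Theorem~\ref{lem:criterion-pi11-cons-over-bd-not-ind} and $\gamma^n_\psi$, since ``provable in $\RCA^*_0 + \Delta^0_n\textrm{-}\WKL + \neg\ind\Sigma^0_n$'' is likewise a c.e.\ condition and (\ref{eqn:low-psi}) is obtained effectively from $\alpha$.
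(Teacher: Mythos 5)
Your argument is exactly the paper's: Corollary~\ref{cor:rec-axiom} is stated there as following directly from Theorem~\ref{lem:criterion-pi11-cons-over-bd-not-ind} (or Theorem~\ref{thm:pi11-cons-over-coh}), since $\psi \mapsto \gamma^n_\psi$ is effective and provability in the computably axiomatized theory $\RCA^*_0 + \bd\Sigma^0_n$ is a c.e.\ condition. Your added details (the normal-form reduction via join-coding, the uniformity of the $\gamma^n_\psi$ template, and Craig's trick for the axiomatizability clause) are just the routine steps the paper leaves implicit, and they are correct.
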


Except for $n=1$, we do not know whether the set in question is finitely axiomatizable.

\begin{corollary}\label{cor:omega-ext}
Let $n \ge 1$ and let $\psi$ be a $\Pi^1_2$ sentence which is $\Pi^1_1$-conservative over
$\RCA^*_0 + \bd \Sigma^0_n + \neg \ind \Sigma^0_n$.
Any countable topped model of $\RCA^*_0 + \bd\Sigma^0_n + \neg \ind \Sigma^0_n$
can be $\bbomega$-extended to a model of $\RCA^*_0 + \bd \Sigma^0_n + \neg \ind \Sigma^0_n + \psi$.
\end{corollary}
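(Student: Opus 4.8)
The plan is to iterate, along an $\omega$-chain of topped structures, the one-step construction implicit in the right-to-left direction of Theorem~\ref{lem:criterion-pi11-cons-over-bd-not-ind}. Write $\psi$ as $\forall X\,\exists Y\,\alpha(X,Y)$ with $\alpha$ arithmetical. By the equivalence in Theorem~\ref{lem:criterion-pi11-cons-over-bd-not-ind}, the hypothesis that $\psi$ is $\Pi^1_1$-conservative over $\RCA^*_0 + \bd\Sigma^0_n + \neg\ind\Sigma^0_n$ is the same as $\RCA^*_0 + \bd\Sigma^0_n \vdash \gamma^n_\psi$, and what the right-to-left argument actually establishes from this (via Lemmas~\ref{lem:d0n-wkl-cons} and~\ref{lem:ll-facts}) is the following \emph{one-step} fact: for every countable $(M, A) \models \bd\Sigma^0_n + \neg\ind\Sigma_n(A)$ and every $\Delta_1(A)$-set $X$, there is a set $\extset Y \subseteq M$ with $(M, \extset Y \oplus A) \models \bd\Sigma^0_n$ and $\alpha(X, \extset Y)$. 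Note that $A$ is $\Delta_1$-definable from the new top $\extset Y \oplus A$, so $\neg\ind\Sigma_n(\extset Y \oplus A)$ is inherited.

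Next I would set up the chain. The given topped model has the form $(M, \Delta^0_1\textrm{-}\mathrm{Def}(M, A_0))$ with $(M, A_0) \models \bd\Sigma^0_n + \exp$ and $\neg\ind\Sigma_n(A_0)$. Using a standard bookkeeping that treats each pair $(k,e)$ at some stage $m \ge k$, and maintaining an increasing sequence $A_0 \le_{\mathrm{T}} A_1 \le_{\mathrm{T}} \cdots$ with each $(M, A_m) \models \bd\Sigma^0_n$ and $\neg\ind\Sigma_n(A_m)$ (the latter automatic since $A_0 \le_{\mathrm{T}} A_m$), at the stage that treats $(k, e)$ with current top $A_m$ I take $X$ to be the $e$-th $\Delta_1(A_k)$-set in a fixed uniform enumeration; since $A_k \le_{\mathrm{T}} A_m$, this $X$ is also a $\Delta_1(A_m)$-set, so the one-step fact applied to $(M, A_m)$ and $X$ produces $\extset Y$, and I put $A_{m+1} := \extset Y \oplus A_m$. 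Let $\X := \bigcup_m \Delta^0_1\textrm{-}\mathrm{Def}(M, A_m)$.

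Finally I would verify that $(M, \X)$ works. It is an $\bbomega$-extension of the given model since $\Delta^0_1\textrm{-}\mathrm{Def}(M, A_0) \subseteq \X$. That $(M, \X) \models \RCA^*_0$ is the standard union-of-chains fact for topped models of $\RCA^*_0$. That $(M, \X) \models \bd\Sigma^0_n$: any instance of $\bd\Sigma^0_n$ in $(M, \X)$ has its parameters in some $\Delta^0_1\textrm{-}\mathrm{Def}(M, A_m)$, which models $\bd\Sigma^0_n$ because $(M, A_m)$ does, and the relevant arithmetical premise and bounding conclusion have the same truth value there as in $(M, \X)$. That $(M, \X) \models \neg\ind\Sigma^0_n$: $A_0 \in \X$ still witnesses it. That $(M, \X) \models \psi$: an arbitrary $X \in \X$ is a $\Delta_1(A_k)$-set for some $k$, hence occurs as the $e$-th such set for some $e$, so the pair $(k, e)$ is treated at some stage, yielding $\extset Y \in \X$ with $\alpha(X, \extset Y)$ — and this remains true in $(M, \X)$ because $\alpha$ is arithmetical.

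The content here is essentially the two earlier theorems; the only thing that needs care is the bookkeeping, which must guarantee that every set eventually landing in $\X$ — including those introduced at late stages — is assigned a solution, and that the $\le_{\mathrm{T}}$-monotonicity of the chain is used so that a set flagged as $\Delta_1(A_k)$ at an early stage is still a $\Delta_1$-set over whatever top is current when its turn comes. One should also confirm that ``the $e$-th $\Delta_1(A_k)$-set'' can be taken to be a genuine uniform enumeration (this uses the universal $\Delta_1$ truth definitions available in $\RCA^*_0$) and that the sets output by the one-step construction are honest subsets of $M$, so that $\alpha(X, \extset Y)$ transfers unchanged to the limit model. Alternatively, one may streamline the bookkeeping by using the uniform solution operator furnished by sentence~(\ref{eqn:low-psi}) of Theorem~\ref{thm:pi11-cons-over-coh}, which serves all $X \le_{\mathrm{T}} X_0$ at once, so that only the $X_0$'s need to be enumerated.
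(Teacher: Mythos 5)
Your proposal is correct and follows essentially the same route as the paper: Corollary~\ref{cor:omega-ext} is obtained there by combining the one-step extension fact isolated in the right-to-left directions of Theorems~\ref{lem:criterion-pi11-cons-over-bd-not-ind} and~\ref{thm:pi11-cons-over-coh} with the standard $\omega$-chain/union argument, which you simply spell out in detail (bookkeeping, preservation of $\bd\Sigma^0_n$ and $\neg\ind\Sigma_n$ of the top, absoluteness of arithmetical formulas in the union). Your verification steps are all sound, so nothing further is needed.
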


\begin{remark}
\leszek{An analogue of Corollary \ref{cor:omega-ext} for $\RCA_0 + \ind \Sigma^0_n + \neg \bd \Sigma^0_{n+1}$ fails.
One way of showing this is as follows. 
Consider the theory in the language of first-order arithmetic and an additional predicate $\mathrm{Tr}$ with axioms consisting of $\ind \Sigma_n$ and the statement that $\mathrm{Tr}$ is a truth class, i.e.~satisfies Tarski's inductive conditions for a definition of truth for the arithmetical language. It is known that this theory is conservative over $\ind \Sigma_n$ (this was proved for $\PA$ instead of $\ind \Sigma_n$ in \cite{kotlarski-krajewski-lachlan} and generalized to a wider class of theories in \cite{leigh:conservativity-truth}; see also \cite{ev:new-satisfaction}). 
Therefore, every countable recursively saturated model of $\ind \Sigma_n$ admits a truth class, and hence, by the results of \cite{towsner:maximum-conservative} (see Theorem \ref{thm:towsner-expansion} in the present paper), the $\Sigma^1_1$ sentence ``there is a $\Delta^0_{n+1}$-set which is a truth class for the language of first-order arithmetic'' is $\Pi^1_1$-conservative over $\RCA_0 + \ind \Sigma^0_n + \neg \bd \Sigma^0_{n+1}$. On the other hand, it is also known that a nonstandard model of $\ind \Sigma_n$ admitting a truth class has to be recursively saturated (this was originally proved for $\PA$ instead of $\ind \Sigma_n$ in \cite{lachlan:satisfaction-rec-sat}; the new proof in \cite{kw:disjunctions-stopping}, as noted in Remark 1 of that paper, works already over $\ind \Delta_0 + \exp$ and thus over $\ind \Sigma_n$ for each $n \ge 1$). 
As a consequence, a nonstandard model of $\RCA_0 + \ind \Sigma^0_n$ will not $\bbomega$-extend to a model of the above $\Sigma^1_1$ sentence unless it is recursively saturated.}
\end{remark}


\section{Arithmetical consequences of $\RT^2_2$}\label{sec:rt22}

In this section, we study the implications of our results for
the general question of what methods can be used to prove $\Pi^1_1$-conservativity of $\Pi^1_2$ sentences over collection,
and for the concrete problem whether Ramsey's Theorem for pairs and two colours ($\RT^{2}_{2}$)
is $\Pi^1_1$-conservative over $\RCA_0 + \bd \Sigma^0_2$.
This is a major open problem in reverse mathematics, originally posed in \cite{cholak-jockusch-slaman}.

Our main technical observation in this area is an upper bound on the quantifier complexity of the first-order
part of the $\Pi^1_1$ sentences one has to consider to settle the problem about $\RT^2_2$ (Corollary \ref{cor:pi05-cons}).
We give two proofs of the bound, both of which work for a wider class of statements than just $\RT^2_2$.
The first proof is based on Theorem \ref{lem:criterion-pi11-cons-over-bd-not-ind} and Corollary \ref{cor:proves-gamma}.
The second actually avoids the use of Theorem \ref{thm:wkl-iso} and its corollaries altogether,
but relies heavily on the particular syntactic form of $\RT^2_2$.

\subsection{Implications of the isomorphism theorem}\label{subsec:rt22-iso}


In \cite{cholak-jockusch-slaman}, there are two separate approaches used to prove
that every computable colouring $f \colon [\omega]^2 \to 2$
has an infinite homogeneous set $H$ that is low$_2$, which means that $H'' \equiv_{\mathrm{T}} 0''$.
The ``double jump control'' approach is to build $H$ in such a way as to
control $H''$ directly using $0''$. The ``single jump control'' approach
is to show that if $W$ is any set that has PA-degree relative to $0'$ --
that is, to recall, if each $0'$-computable infinite 0--1 tree has a $W$-computable path -- then there is $H$ homogeneous for $f$
such that $W$ computes $H'$, and in fact still has PA-degree relative to $H'$.
This is essentially the unrelativized version of the $\ll^{2}_{\emptyset}$-basis theorem for $\RT^{2}_{2}$,
i.e.~the special case of the $\ll^{2}_{\emptyset}$-basis theorem for $X = \emptyset$,
with $Z$ such that $Z' \equiv_\mathrm{T} W$.
Since by the low basis theorem relativized to $0'$ there is a set $W$
of PA-degree relative to $0'$ such that $W' \le_{\mathrm{T}} 0''$,
single jump control again gives $H$ that is low$_2$.

A relativized version of the double jump control approach was the one
used to prove $\Pi^1_1$-conservativity of $\RT^2_2$ over
$\RCA_0 + \ind \Sigma^0_2$. When $\ind \Sigma^0_2$ fails,
single jump control (or the $\ll^{2}_{\emptyset}$-basis theorem) seems more likely to be applicable,
and indeed it was applied in \cite{csy:conservation-weaker} to prove that
weakenings of $\RT^2_2$ known as $\CAC$ and $\ADS$
are $\Pi^1_1$-conservative over $\RCA_0 + \bd \Sigma^0_2$.
\emph{A priori}, however, it is conceivable that a proof of $\Pi^1_1$-conservativity
of $\RT^2_2$ over $\bd \Sigma^0_2$ could use neither of the two approaches.


What Theorem \ref{lem:criterion-pi11-cons-over-bd-not-ind} shows
is that if a $\Pi^1_2$ sentence $\psi \defeq \forall X \exists Y \alpha$
is $\Pi^1_1$-conservative over $\RCA_0 + \bd \Sigma^0_n + \neg \ind \Sigma^0_n$,
then in principle this \emph{has to} be provable by means of the $\ll^{n}_{A}$-basis theorem for $\psi$, where
$A$ is any witness to $\neg \ind \Sigma^0_n$.
In the specific case of $\RT^2_2$, the relevant $n$ equals $2$, and conservativity over $\ind\Sigma^0_2$
is already known from \cite{cholak-jockusch-slaman}.
Thus, if $\RT^2_2$ is in fact $\Pi^1_1$-conservative over $\bd\Sigma^0_2$,
then in the currently unknown cases it will always be possible to apply
the single jump control argument (relative to a \leszek{set $A$ witnessing} $\neg \ind \Sigma^0_2$).

The results of Section \ref{sec:generalization} also imply that there is always a bound on the complexity of $\Pi^1_1$ sentences we need to study in order to understand if a given $\Pi^1_2$ sentence is $\Pi^1_1$-conservative over $\bd \Sigma^0_n + \neg \ind \Sigma^0_n$. For a $\Pi^1_2$ sentence $\psi$, it follows from Theorem \ref{lem:criterion-pi11-cons-over-bd-not-ind}
and Corollary \ref{cor:proves-gamma} that $\psi$ is $\Pi^1_1$-conservative over $\bd \Sigma^0_n + \neg \ind \Sigma^0_n$ if and only if $\RCA^*_0 + \bd \Sigma^0_n$ proves the sentence $\gamma^n_\psi$.
If $\psi$ is $\forall \exists \Pi^0_k$, then $\gamma^n_\psi$ can be written as a $\forall \Pi^0_{\ell}$ statement for $\ell = \max(n+3,k+2)$; and, by Corollary~\ref{cor:proves-gamma}, it is provable in $\RCA^*_0 + \bd \Sigma^0_n$. So, we get:

\begin{corollary}
Let $n \ge 1$, and let $\psi$ be a $\forall \exists \Pi^0_k$ sentence, where $k \ge 2$.
Then $\psi$ is $\Pi^1_1$-conservative over $\RCA^*_0 + \bd \Sigma^0_n + \neg \ind \Sigma^0_n$
if and only if it is $\forall \Pi^0_{\ell}$-conservative over $\RCA^*_0 + \bd \Sigma^0_n + \neg \ind \Sigma^0_n$, where $\ell = \max(n+3,k+2)$.
\end{corollary}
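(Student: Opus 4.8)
The ``if'' direction is trivial: every $\forall\Pi^0_\ell$ sentence is in particular a $\Pi^1_1$ sentence, since its first-order universal quantifiers can be absorbed into the $\Pi^0_\ell$ matrix, leaving only a block of universal second-order quantifiers. Hence $\Pi^1_1$-conservativity of $\psi$ over the given theory implies $\forall\Pi^0_\ell$-conservativity. All the content is in the converse, and the plan is to route it through the criterion of Theorem~\ref{lem:criterion-pi11-cons-over-bd-not-ind} together with the provability fact of Corollary~\ref{cor:proves-gamma}.

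I would first bring $\psi$ into the form $\forall X\,\exists Y\,\alpha(X,Y)$ with $\alpha$ arithmetical that Theorem~\ref{lem:criterion-pi11-cons-over-bd-not-ind} and Corollary~\ref{cor:proves-gamma} require; for a $\forall\exists\Pi^0_k$ sentence this is a routine prenexing (fold the universal and existential second-order quantifiers into $X$ and $Y$, encode the leading first-order universal quantifiers into $X$ as well, and pull the quantifier $\exists Y$ out past the resulting implication), which yields an $\alpha$ of complexity within $\Pi^0_{k+2}$. The heart of the matter is then to compute the complexity of the $\Pi^1_1$ sentence $\gamma^n_\psi$ attached to $\psi$ by Theorem~\ref{lem:criterion-pi11-cons-over-bd-not-ind}. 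Recall that $\gamma^n_\psi$ says ``for every set $A$, if $\neg\ind\Sigma_n(A)$ then the $\ll^n_A$-basis theorem for $\psi$ holds'', and the $\ll^n_A$-basis theorem in turn says ``for all sets $Z,X$, if $X\ll^n_A Z$ then there is a $\Delta_n(Z\oplus A)$-set $\extset Y$ with $\alpha(X,\extset Y)$ and $X\oplus\extset Y\ll^n_A Z$''. Two observations do the job. First, there are no existential second-order quantifiers anywhere: the solution $\extset Y$ is a $\Delta_n$-set, i.e.\ witnessed by an existential \emph{number} quantifier over indices, so $\gamma^n_\psi$ is genuinely $\Pi^1_1$. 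Second, using Lemma~\ref{lem:ll-facts}\ref{pt:ll:cod} to rephrase $\ll^n_A$ via coded $\bbomega$-models of $\WKL^*_0$, together with the usual arithmetization of ``$W$ codes an $\bbomega$-model of $\WKL^*_0$ in which $\exists Y\,\alpha(X,Y)$ holds'', one can collect all universal second-order quantifiers at the front and bound the remaining arithmetical matrix by a $\Pi^0_\ell$ formula with $\ell=\max(n+3,k+2)$, the $n+3$ coming from $\ll^n_A$ and from $\neg\ind\Sigma_n(A)$ and the $k+2$ from $\alpha$. Carrying out this quantifier count with care is the one genuinely laborious step of the proof; everything else is soft.

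With $\gamma^n_\psi$ identified as a $\forall\Pi^0_\ell$ sentence, the argument closes quickly. Suppose $\psi$ is $\forall\Pi^0_\ell$-conservative over $T:=\RCA^*_0+\bd\Sigma^0_n+\neg\ind\Sigma^0_n$. By Corollary~\ref{cor:proves-gamma}, $\RCA^*_0+\bd\Sigma^0_n+\psi\vdash\gamma^n_\psi$, hence a fortiori $T+\psi\vdash\gamma^n_\psi$; since $\gamma^n_\psi\in\forall\Pi^0_\ell$, conservativity gives $T\vdash\gamma^n_\psi$. On the other hand $\RCA^*_0+\ind\Sigma^0_n\vdash\gamma^n_\psi$ vacuously, because the hypothesis $\neg\ind\Sigma_n(A)$ of $\gamma^n_\psi$ is unsatisfiable there. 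Since every model of $\RCA^*_0+\bd\Sigma^0_n$ satisfies either $\ind\Sigma^0_n$ or $\neg\ind\Sigma^0_n$, we conclude $\RCA^*_0+\bd\Sigma^0_n\vdash\gamma^n_\psi$, and then the right-to-left direction of Theorem~\ref{lem:criterion-pi11-cons-over-bd-not-ind} shows that $\psi$ is $\Pi^1_1$-conservative over $T$, as required.
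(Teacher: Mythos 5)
Your argument is exactly the paper's: combine Theorem~\ref{lem:criterion-pi11-cons-over-bd-not-ind} with Corollary~\ref{cor:proves-gamma}, observe that $\gamma^n_\psi$ is provable from $\RCA^*_0+\ind\Sigma^0_n$ vacuously and hence from $\RCA^*_0+\bd\Sigma^0_n$ once $\forall\Pi^0_\ell$-conservativity transfers it to $\RCA^*_0+\bd\Sigma^0_n+\neg\ind\Sigma^0_n$, and note that the other direction is immediate since $\forall\Pi^0_\ell\subseteq\Pi^1_1$. The closing paragraph and the case split on $\ind\Sigma^0_n$ are correct (the labelling of the trivial direction as ``if'' is backwards, but harmless), and the paper likewise only asserts, rather than carries out, the quantifier count for $\gamma^n_\psi$.

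The one concrete weak point is your normal form. If you only bring $\psi$ to the shape $\forall X\,\exists Y\,\alpha(X,Y)$ with $\alpha$ bounded by $\Pi^0_{k+2}$, then your own accounting (``the $k+2$ from $\alpha$'') cannot yield $\ell=\max(n+3,k+2)$: in $\gamma^n_\psi$ the formula $\alpha(X,\extset Y)$ appears with a $\Delta_n$-definable $\extset Y$ substituted and under an existential number quantifier over indices, so a $\Pi^0_{k+2}$ bound on $\alpha$ would push that disjunct past $k+2$. To get the stated $\ell$ you should absorb the first-order existential block of $\psi$ into $Y$ as well (and the decoding overhead into the matrix), keeping $\alpha$ essentially $\Pi^0_k$; with that sharper normal form the count you sketch, using Lemma~\ref{lem:ll-facts}\ref{pt:ll:cod} for the positive occurrence of $\ll^n_A$ and the direct definition for the negative one, does come out as claimed.
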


$\RT^2_2$ is a $\forall \exists \Pi^0_2$ sentence, and we have the additional information
that it is $\Pi^1_1$-conservative over $\ind \Sigma^0_2$. This gives:

\begin{corollary}\label{cor:pi05-cons}
$\RCA_0 + \RT^2_2$ is $\Pi^1_1$-conservative over $\bd \Sigma^0_2$ if and only
if it is $\forall \Pi^0_5$-conservative over $\bd \Sigma^0_2$.
\end{corollary}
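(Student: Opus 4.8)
The forward direction is immediate, since every $\forall\Pi^0_5$ sentence is a $\Pi^1_1$ sentence. For the converse, the plan is to apply the preceding corollary with $n = 2$ and $\psi = \RT^2_2$, and to combine it with the Cholak--Jockusch--Slaman theorem that $\RCA_0 + \RT^2_2$ is $\Pi^1_1$-conservative over $\RCA_0 + \ind\Sigma^0_2$. Here $\RT^2_2$ is a $\forall\exists\Pi^0_2$ sentence; moreover $\RCA_0 + \RT^2_2 \vdash \bd\Sigma^0_2$ by Hirst's theorem, and $\bd\Sigma^0_2$ proves $\ind\Sigma^0_1$ over $\RCA^*_0$, so that $\RCA^*_0 + \bd\Sigma^0_2$ and $\RCA_0 + \bd\Sigma^0_2$ are literally the same theory. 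Thus the preceding corollary, with $\ell = \max(2+3,\,2+2) = 5$, gives: $\RT^2_2$ is $\Pi^1_1$-conservative over $\RCA^*_0 + \bd\Sigma^0_2 + \neg\ind\Sigma^0_2$ if and only if it is $\forall\Pi^0_5$-conservative over the same theory (where ``$\RT^2_2$ is $\Gamma$-conservative over $S$'' abbreviates ``$S + \RT^2_2$ has no $\Gamma$-consequence not already provable in $S$'').

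It then remains to transfer, for each $\Gamma \in \{\Pi^1_1,\, \forall\Pi^0_5\}$, between the statements ``$\RCA_0 + \RT^2_2$ is $\Gamma$-conservative over $\RCA_0 + \bd\Sigma^0_2$'' and ``$\RT^2_2$ is $\Gamma$-conservative over $\RCA^*_0 + \bd\Sigma^0_2 + \neg\ind\Sigma^0_2$''. The tool is the dichotomy on whether $\ind\Sigma^0_2$ holds, together with the syntactic observation that over $\RCA^*_0$ a well-behaved universal $\Sigma_2$ formula is available, so that $\ind\Sigma^0_2$ may be taken to be a single $\forall\Pi^0_4$ sentence and $\neg\ind\Sigma^0_2$ a single $\Sigma^1_1$ sentence; consequently, for any $\gamma \in \Gamma$ the sentence $\neg\ind\Sigma^0_2 \to \gamma$, which is $\ind\Sigma^0_2 \vee \gamma$, again lies in $\Gamma$, because both $\Pi^1_1$ and $\forall\Pi^0_5$ are closed under disjunction with $\forall\Pi^0_4$ sentences. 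In one direction, if $\RCA_0 + \RT^2_2$ is $\Gamma$-conservative over $\RCA_0 + \bd\Sigma^0_2$ and $\RCA^*_0 + \bd\Sigma^0_2 + \neg\ind\Sigma^0_2 + \RT^2_2 \vdash \gamma$ with $\gamma \in \Gamma$, then (using the deduction theorem, Hirst's theorem, and $\RCA^*_0 + \bd\Sigma^0_2 = \RCA_0 + \bd\Sigma^0_2$) $\RCA_0 + \RT^2_2 \vdash \neg\ind\Sigma^0_2 \to \gamma$, so $\RCA_0 + \bd\Sigma^0_2 \vdash \neg\ind\Sigma^0_2 \to \gamma$, that is $\RCA^*_0 + \bd\Sigma^0_2 + \neg\ind\Sigma^0_2 \vdash \gamma$. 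In the other direction, if $\RT^2_2$ is $\Gamma$-conservative over $\RCA^*_0 + \bd\Sigma^0_2 + \neg\ind\Sigma^0_2$ and $\RCA_0 + \RT^2_2 \vdash \gamma$ with $\gamma \in \Gamma$, then the same manipulation gives $\RCA_0 + \bd\Sigma^0_2 \vdash \neg\ind\Sigma^0_2 \to \gamma$, while from $\RCA_0 + \ind\Sigma^0_2 + \RT^2_2 \vdash \gamma$ the Cholak--Jockusch--Slaman theorem yields $\RCA_0 + \ind\Sigma^0_2 \vdash \gamma$, i.e.\ $\RCA_0 + \bd\Sigma^0_2 \vdash \ind\Sigma^0_2 \to \gamma$; a case split on $\ind\Sigma^0_2$ now gives $\RCA_0 + \bd\Sigma^0_2 \vdash \gamma$.

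Chaining the transfer for $\forall\Pi^0_5$, the equivalence from the preceding corollary, and the transfer for $\Pi^1_1$ yields the corollary. I do not anticipate a genuine obstacle: all the mathematical content sits in the preceding corollary and the Cholak--Jockusch--Slaman theorem, and the only thing requiring care is the bookkeeping above -- in particular the identification $\RCA^*_0 + \bd\Sigma^0_2 = \RCA_0 + \bd\Sigma^0_2$, the representation of $\ind\Sigma^0_2$ as a single low-complexity sentence over $\RCA^*_0$, and the fact that the Cholak--Jockusch--Slaman theorem is precisely what is needed to handle the models in which $\ind\Sigma^0_2$ holds.
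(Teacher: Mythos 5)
Your proof is correct and takes essentially the same route as the paper: the paper obtains this corollary by applying the preceding corollary with $n=2$, $k=2$ (so $\ell=5$) together with the facts that $\RT^2_2$ is $\forall\exists\Pi^0_2$, implies $\bd\Sigma^0_2$ over $\RCA_0$, and is $\Pi^1_1$-conservative over $\RCA_0 + \ind\Sigma^0_2$. Your transfer arguments merely spell out the case split on $\ind\Sigma^0_2$ (and the expressibility of $\ind\Sigma^0_2$ as a single $\forall\Pi^0_4$ sentence) that the paper leaves implicit.
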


The discussion up to this point has only used the following properties of $\RT^2_2$:
it is a $\Pi^1_2$, and more precisely $\forall \exists \Pi^0_2$, sentence
that is $\Pi^1_1$-conservative over $\ind \Sigma^0_2$ and implies $\bd \Sigma^0_2$ in the presence of $\RCA_0$.
More specific features of $\RT^2_2$ are needed for the remark below.


\begin{remark}
In the special case of $\RT^{2}_{2}$,
the role of $\gamma_{\RT^2_2}^{2}$ in the preceding
can be played by the $\ll^{2}_{\emptyset}$-basis theorem for $\RT^{2}_{2}$.
Indeed, we have the following:
\begin{enumerate}[(1)]
 \item $\RCA_{0}+\RT^{2}_{2}$ proves the $\ll^{2}_{\emptyset}$-basis theorem for $\RT^{2}_{2}$.
 \item $\RCA_0 + \RT^2_2$ is $\Pi^1_1$-conservative over $\bd \Sigma^0_2$ if and only
if $\bd \Sigma^0_2$ proves the $\ll^{2}_{\emptyset}$-basis theorem for $\RT^{2}_{2}$.
\end{enumerate}

To show (1), let $(M,\X)$ be a countable model of $\RCA_{0}+\RT^{2}_{2}$,
and assume that $X,Z\in \X$ are such that $X\ll_{\emptyset}^{2} Z$
and $X$ is a $2$-coloring of $[M]^2$.
(Note that $(M,\X)$ is already a model of $\Delta^0_2\textrm{-}\WKL$.)
By Lemma \ref{lem:ll-facts}\ref{pt:ll:cod},
there is a $\Delta_2(Z)$-set $\extset W$ coding a model $(M,\extset \W)$
of $\WKL^*_0$ with $X' \in \extset \W$.
Applying Lemma \ref{lem:wkl-coded}, take $\extset V\in\W$ such that $X'\ll_{\emptyset}^{1} \extset V$.
If $\ind\Sigma_{1}(\extset V)$ holds, then $\extset V$-primitive recursion is available in $(M,\X)$.
Thus one can directly formalize the single jump control argument
in the form of the proofs of Theorems~6.44 and 6.57 and Corollary~6.58 of \cite{hirschfeldt:slicing},
which are carried out using solely ``$d$-computable/primitive-recursive arguments''
where $d$ is a fixed degree such that $\emptyset'\ll d$.
This gives a $\Delta_{1}(\extset V)$-set $\extset Y$ such that $\extset Y$ is an infinite homogeneous set for $X$ and $(X\oplus \extset Y)'\le_{T} \extset V$.
Since $\extset V \ll_{\emptyset}^{1}Z'$, we get $X\oplus \extset Y\ll_{\emptyset}^{2}Z$.
If $\ind\Sigma_{1}(\extset V)$ fails, then both $(M,\extset \W)$ and $(M,\Delta^0_2\textrm{-Def}(M,\X))$
are models of $\WKL^*_0+\neg\ind\Sigma^{0}_{1}$ containing $\extset V$,
so one may find $\extset Y\in\extset \W$ such that $\extset Y$ is an infinite homogeneous set for $X$ and $(X\oplus \extset Y)'\in \extset\W$ as in the proof of Theorem~\ref{lem:criterion-pi11-cons-over-bd-not-ind}.
This again implies $X\oplus \extset Y\ll_{\emptyset}^{2}Z$.
Hence the $\ll^{2}_{\emptyset}$-basis theorem for $\RT^{2}_{2}$ holds in $(M,\X)$.

The left-to-right direction of (2) is a direct consequence of (1), and the right-to-left direction of (2) can be shown as in the proof of the right-to-left direction of Theorem~\ref{lem:criterion-pi11-cons-over-bd-not-ind}.
\end{remark}

\subsection{Restricted $\Sigma^1_1$ formulas}

\begin{definition}
A \emph{restricted $\Sigma^1_1$}, or $\rs$, formula has the form $\exists Y\xi$, where $\xi$ is $\Sigma^0_3$.
A \emph{restricted $\Pi^1_2$}, or $\rp$, sentence has the form $\forall X\, (\eta(x) \to \exists Y \xi(X,Y))$,
where $\exists Y\xi(X,Y)$ is $\rs$.
\end{definition}

As mentioned above, $\RT^2_2$ is a $\forall \exists \Pi^0_2$ and thus an $\rp$ sentence.

In this subsection, we study the behaviour of $\rs$ formulas in $\WKL^*_0 + \neg \ind \Sigma^0_1$.
We show that each $\rs$ formula is equivalent in $\WKL^*_0 + \neg \ind \Sigma^0_1$
to an arithmetical formula with a relatively clear combinatorial meaning.
Lifted to $n =2$, this provides us with a $\forall \Pi^0_5$ consequence of $\RT^2_2$
that leads to an alternative proof of Corollary \ref{cor:pi05-cons},
but differs from the sentence $\gamma^2_{\RT^2_2}$ used in the proof from Section \ref{subsec:rt22-iso}
by having a meaningful restriction to computable instances.

The investigation of $\rs$ formulas also gives us an opportunity
to discuss some additional interesting properties of $\RCA^*_0 + \neg \ind \Sigma^0_1$.
We begin with a variation on a theme suggested by
Theorem \ref{thm:wkl-iso}: the possibilities for changing a model
of $\RCA^*_0 + \neg \ind \Sigma^0_1$ by adding new sets to it are rather limited.
Here, we show that it is not possible to add an unbounded set that is ``sparser''
than all those present in the ground model or to
add a new bounded $\Sigma^0_1$-definable set. The statement about bounded sets is not
used elsewhere in the paper but it is potentially of independent interest.

\begin{lemma}\label{lem:sparse-set}
Let $(M,\X) \models \RCA^*_0$ and let $A \in \X$ be such that $(M,A) \models \neg \ind \Sigma_1(A)$.
Then:
\begin{enumerate}[(a)]
\item\label{pt:sparse:sparse} For every unbounded set $S \in \X$ there exists an unbounded
$\Delta_1(A)$-definable set $B$ such that
for every $b_1,b_2 \in B$ with $b_1 < b_2$ there exist
$s_1, s_2 \in S$ with $b_1 \le s_1 < s_2 \le b_2$.
\item\label{pt:sparse:expansion} Every bounded $\Sigma^0_1(M,\X)$-definable set
 is $\Sigma_1(A)$-definable.
\end{enumerate}
\end{lemma}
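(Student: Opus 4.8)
Both parts will be derived from the Chong--Mourad lemma (Theorem~\ref{lem:chong-mourad}), which lets us capture bounded $\Sigma^0_1$-definable sets by sets of the form $\Ack(s)$, together with a careful choice of the $\Delta_1(A)$-definable enumeration witnessing $\neg\ind\Sigma_1(A)$.

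The statement about bounded sets, part \ref{pt:sparse:expansion}, is the quick one (and in fact does not need $\neg\ind\Sigma_1(A)$). If $\extset X$ is a bounded $\Sigma^0_1(M,\X)$-set, then since $\RCA^*_0$ proves $\bd\Sigma^0_1$ the witnesses to the $\Sigma^0_1$ definition of $\extset X$ can be bounded uniformly over the (bounded) domain; hence $\extset X$ is actually $\Delta^0_1$-definable, so $\extset X \in \X$, so being a bounded element of $\X$ it equals $\Ack(s)$ for some $s \in M$. It is therefore $\Delta_0$-definable with the first-order parameter $s$, and in particular $\Sigma_1(A)$-definable.

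For part \ref{pt:sparse:sparse} I would first record, using $\neg\ind\Sigma_1(A)$ on top of $\bd\Sigma_1(A)+\ind\Delta_0(A)$, that there is a nondecreasing map $i\mapsto a_i$ with a $\Delta_0(A)$-definable graph, whose domain is a proper cut $I\subsetneq_\ee M$ and whose range is cofinal in $M$, and which may be arranged so that $a_i\ge i$ and $I<a_0$; concretely one lets $a_i$ be (a shift of) the least bound on the $\Sigma_1(A)$-witnesses for all numbers $\le i$ in a fixed $\Sigma_1(A)$-cut, just as at the beginning of the proof of Theorem~\ref{thm:wkl-iso}. Now fix an unbounded $S\in\X$. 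Being unbounded, $S$ is ``infinite'', so $p_S(x):=|S\cap[0,x]|$ is total, lies in $\X$, and has cofinal range; hence $g:=p_S\circ a$ is nondecreasing on $I$ with cofinal range. Since $|S\cap[a_i,a_j]|\ge g(j)-g(i)$ whenever $i<j$ in $I$, it is enough to produce a $\Delta_1(A)$-definable unbounded set of the form $\{a_i:i\in C^*\}$ with $C^*\subseteq I$ cofinal in $I$ and with consecutive elements of $C^*$ having $g$-values at least $2$ apart. Take $C^*_0\subseteq I$ to be the set of points where $g$ strictly increases; it is cofinal in $I$, and both $C^*_0$ and $I\setminus C^*_0$ are bounded and $\Sigma^0_1$-definable (with $A$ and $S$ as parameters), so Theorem~\ref{lem:chong-mourad} yields $t\in M$ with $\Ack(t)\cap I=C^*_0$. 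Let $C^*$ consist of every other element of $C^*_0$, i.e.\ $C^*=\{i\in I: i\in_\Ack t\text{ and }|\Ack(t\bmod 2^i)|\text{ even}\}$, and set $B=\{a_i:i\in C^*\}$. If $i<j$ are consecutive in $C^*$ then exactly one element $i'$ of $C^*_0$ lies strictly between them, and chasing the strict increases of $g$ at $i'$ and at $j$ gives $g(j)\ge g(i)+2$, so $B$ is sparse with respect to $S$ in the required sense (non-consecutive pairs being easier). Crucially, because the graph of $i\mapsto a_i$ is $\Delta_0(A)$-definable and $a_i\ge i$, membership ``$x\in B$'' can be written with the quantifier over $i$ bounded by $x$, so $B$ is $\Delta_0(A)$-definable with $t$ as its only (first-order) parameter, hence $\Delta_1(A)$-definable.

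The main obstacle is the tension between the weakness of the theory and the demand that $B$ be $\Delta_1(A)$-definable rather than merely $\Delta_1(A\oplus S)$-definable. Without $\Sigma^0_1$-induction, $\Sigma^0_1$-definable subsets of $I$ need not have least elements, so it is not clear that $B$ is unbounded, i.e.\ that $C^*$ is cofinal in $I$; and any construction that ``follows'' $S$ directly will retain $S$ as a parameter. Both difficulties are handled by invoking Chong--Mourad at exactly this point: it removes the set parameter $S$ (replacing it by the number $t$) and simultaneously endows $C^*_0$ with the combinatorial structure of a coded finite set intersected with a cut, which is enough to run the cofinality argument with no induction beyond $\ind\Delta_0$. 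Indeed, given $i_0\in I$ pick $i_1\in C^*_0$ with $i_1\ge i_0$; if $|\Ack(t\bmod 2^{i_1})|$ is odd, replace $i_1$ by the least element of the \emph{coded} set $\Ack(t)\cap(i_1,\infty)$, which is nonempty because $C^*_0$ is cofinal in $I$ and which must lie in $I$ (otherwise $C^*_0\cap(i_1,\infty)$ would be empty), obtaining an element of $C^*$ above $i_0$. Verifying the bookkeeping in the paragraph above --- especially the precise quantifier complexities ensuring that $C^*_0$, $I\setminus C^*_0$ are $\Sigma^0_1$ and that $B$ ends up $\Delta_1(A)$ --- is the only genuinely technical part.
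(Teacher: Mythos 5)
Your part \ref{pt:sparse:sparse} is essentially the paper's argument: take the canonical unbounded $\Delta_1(A)$-definable set enumerated along a $\Sigma_1(A)$-cut $I$, mark the indices at which new elements of $S$ appear, and then use Theorem \ref{lem:chong-mourad} to replace the set parameter $S$ by a single code $t$ with $\Ack(t)\cap I$ equal to the marked set, so that the selected subsequence becomes $\Delta_1(A)$-definable and its cofinality in $I$ can be checked by manipulating the coded set $\Ack(t)$. (The paper builds the ``two elements of $S$ in between'' requirement directly into the recursion defining the subsequence, while you obtain it by taking every other strict-increase point; that is a harmless variation.) One intermediate claim is false as stated, however: for an unbounded $S\in\X$ the counting function $p_S$ need \emph{not} have cofinal range --- take $S=\{a_i: i\in I\}$ itself, for which $p_S(x)$ always lies in the proper cut $I$. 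What you actually need, and what is true, is only that the set $C^*_0$ of strict-increase points of $g$ is cofinal in $I$; argue this directly: given $i_0\in I$, unboundedness of $S$ gives $s\in S$ with $s>a_{i_0}$, and the least index $i$ with $a_i\ge s$ (available since bounded pieces of the range are coded) is a strict-increase point above $i_0$. With that repair, part (a) is fine.

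Part \ref{pt:sparse:expansion} has a genuine gap. The step ``since $\RCA^*_0$ proves $\bd\Sigma^0_1$, the witnesses can be bounded uniformly over the bounded domain'' is not an instance of collection: $\bd\Sigma^0_1$ bounds witnesses only when \emph{every} $x$ below the bound has one, whereas here only the members of the set do. Uniformly bounding witnesses for bounded $\Sigma^0_1$-definable sets (equivalently, that such sets are $\Delta^0_1$-definable, hence coded) is essentially $\ind\Sigma^0_1$, which is exactly what the hypothesis negates, and your conclusion is in fact false in the lemma's setting: the $\Sigma_1(A)$-definable proper cut $I$ is a bounded $\Sigma^0_1$-definable set, and no proper cut can belong to $\X$, so it is not of the form $\Ack(s)$ --- which is also why the lemma only claims $\Sigma_1(A)$-definability rather than codedness. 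Your parenthetical remark that (b) needs no hypothesis on $A$ fails for the same reason: if $M$ satisfies lightface $\ind\Sigma_1$ but $\ind\Sigma_1(A^*)$ fails for some $A^*\in\X$, then with $A=\emptyset$ the cut witnessing $\neg\ind\Sigma_1(A^*)$ is bounded and $\Sigma^0_1(M,\X)$-definable but not $\Sigma_1$-definable. The paper's proof uses $\neg\ind\Sigma_1(A)$ essentially: writing $R=\{x\le e: \exists z\,\theta(x,z)\}$, it applies Theorem \ref{lem:chong-mourad} to the bounded $\Sigma^0_1$-definable set $\{\tuple{i,x}\in I\times[0,e]: \exists z\le c_i\,\theta(x,z)\}$ and its complement inside $I\times[0,e]$, obtaining $d$ such that $R=\{x\le e: \exists i\in I\,\tuple{i,x}\in\Ack(d)\}$, which is $\Sigma_1(A)$-definable because $I$ is. You should adopt this (or an equivalent) argument for part (b).
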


The result has a rather obvious generalization to higher $n$, in which $(M,\X)$ satisfies
$\bd\Sigma^0_n + \neg \ind \Sigma_n(A)$ and the statements \ref{pt:sparse:sparse} and \ref{pt:sparse:expansion} concern
$\Delta^0_n$- and $\Sigma^0_n$-definable sets, respectively.

\begin{proof}
Let $C$ be an unbounded $\Delta_1(A)$-definable set such that $C = \{c_i: i \in I\}$ for some proper $\Sigma_1(A)$-definable cut $I \subsetneq_\ee M$.
Such a set $C$ exists because $\neg \ind \Sigma_1(A)$ holds.

We first prove \ref{pt:sparse:sparse}. Given an unbounded $S \in \X$, we find the set $B$ as a suitable subset of $C$.
Define a sequence $\langle i_k: k \in K\rangle$ cofinal in $I$ by
\begin{align*}
i_{0}&=0, \\
i_{k+1}&=\min\{i>i_{k}: |S\cap [c_{i_k}, c_{i}]|\ge 2\}.
\end{align*}
Here $K$ consists of exactly those $k \in I$ for which $i_k$ exists.
$K$ is clearly $\Sigma^0_1$-definable, and it is a cut because both $S$ and $C$ are unbounded sets.
Let $B$ be $\{c_{i_k}: k \in K\}$.

Clearly, $B$ is unbounded and there are at least two elements of $S$ between
any two elements of $B$. It remains to show that $B$ is $\Delta_1(A)$-definable.
Notice that both $\{ i_k: k \in K \}$ and $I \setminus \{ i_k: k \in K \}$ are $\Sigma^0_1$-definable.
So, by Theorem \ref{lem:chong-mourad}, there is some $d \in M$
such that $\Ack(d) \cap I = \{ i_k: k \in K \}$.
Then $B = \{c_i \in C: i \in \Ack(d)\}$, which shows that $B$ is $\Delta_1(A)$-definable because $C$ is.

For \ref{pt:sparse:expansion}, we adopt some ideas from the proof of Theorem~2.2
 in Chong--Yang~\cite{art:cy}.
Let $R$ be a $\Sigma^0_1$-definable set in $(M,\X)$
 that is bounded above by $e\in M$.
Suppose $R=\{x\in M : (M,\X)\models\exists z\, \theta(x,z)\}$,
 where $\theta$ is a $\Sigma^0_0$~formula,
  possibly with parameters from~$(M,\X)$.
Use Theorem~\ref{lem:chong-mourad} to obtain $d\in M$
 such that
 \begin{equation*}
  \Ack(d)\cap(I\times[0,e]) = \{ \tuple{i,x} :
   (M,\X)\models\eb{z}{c_i} \theta(x,z)
  \}.
 \end{equation*}
Then
 \begin{math}
  R=\{x\in M:x\le e \land \exists i\!\in\! I\,
   \tuple{i,x}\in\Ack(d)
  \}
 \end{math}.
So $R$ is $\Sigma_1(A)$-definable.
\end{proof}

In general, an $\rs$ formula has the form $\exists Y \, \exists w\, \forall u\, \exists v\, \delta$ where
$\delta$ is bounded. However, we can assume without loss of generality that the arithmetical part of the formula
is actually $\forall u\, \exists v\, \delta$, because the initial existential quantifier $\exists w$ can be merged
with the existential set quantifier $\exists Y$. Using standard tricks, we can also assume
that all quantifiers in $\delta$ are bounded by $v$.

\begin{definition}\label{def:alpha-sigma}
Let $\varphi(X)$ be an $\rs$ formula of the form $\exists Y \, \forall u\, \exists v\, \delta(X,Y, u,v)$,
where all quantifiers in $\delta$ are bounded by $v$. We define $\alpha_\varphi(X,Z)$
to be the following arithmetical statement:
\begin{quote}
There is an unbounded $\Delta_1(Z)$-definable set $W = \{w_i : i \in I\}$ \\
such that for every $i \in I$ there is a finite set $y \subseteq [0,w_i]$ \\
satisfying $\ab{u}{w_j}\eb{v}{w_{j+1}}\delta(X,y,u,v)$ for each $j < i$.
\end{quote}
\end{definition}

Roughly speaking, $\alpha_\varphi(X,Z)$ says that some $Z$-computable set
provides a lower bound for the ``rate of convergence'' of sequence of finite approximations
to a witness for the $\exists Y$ quantifier in $\varphi(X)$.

\begin{theorem}\label{thm:rs-arithmetical}
Let $\varphi(X)$ be $\rs$, and let $\alpha_\varphi(X,Z)$ be the arithmetical formula
from Definition \ref{def:alpha-sigma}. Then:
\begin{enumerate}[(a)]
\item\label{pt:rs>} $\RCA^*_0 \vdash \forall X\, \forall Z \, (\neg \ind \Sigma_1(Z) \to (\varphi(X) \to \alpha_\varphi(X,Z)))$,
\item\label{pt:rs<} $\WKL^*_0 \vdash \forall X\, \forall Z \, (\alpha_\varphi(X,Z) \to \varphi(X))$.
\end{enumerate}
\end{theorem}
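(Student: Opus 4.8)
The plan is to prove part~\ref{pt:rs>} by thinning a cofinal $\Delta_1(Z)$-definable set relative to the witness $Y$, and part~\ref{pt:rs<} by a compactness argument using $\WKL$. Throughout I would use the normalizations indicated before Definition~\ref{def:alpha-sigma}: treat $\varphi(X)$ as $\exists Y\,\forall u\,\exists v\,\delta(X,Y,u,v)$ with every quantifier of $\delta$ bounded by $v$, and arrange additionally that $\delta(X,Y,u,v)$ only inspects $Y$ below $v$, so that $\delta(X,Y,u,v)\leftrightarrow\delta(X,Y',u,v)$ whenever $Y$ and $Y'$ agree on $[0,v)$.

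For part~\ref{pt:rs>}, work in $\RCA^*_0$, fix $X$, fix $Z$ with $\neg\ind\Sigma_1(Z)$, and fix a set $Y$ with $\forall u\,\exists v\,\delta(X,Y,u,v)$. As in the proof of Lemma~\ref{lem:sparse-set}, $\neg\ind\Sigma_1(Z)$ supplies an unbounded $\Delta_1(Z)$-definable set $C=\{c_i:i\in I\}$, enumerated increasingly and indexed by a proper $\Sigma_1(Z)$-definable cut $I$. I would thin $C$ relative to $Y$: put $w_0=c_0$, and having reached $w_k=c_{i_k}$, use $\bd\Sigma^0_1$ to pick the least $B_k$ with $\ab{u}{w_k}\eb{v}{B_k}\delta(X,Y,u,v)$, and then let $w_{k+1}=c_{i_{k+1}}$ be the least element of $C$ exceeding $\max(w_k,B_k)$ (this exists since $C$ is cofinal, using $\ind\Delta^0_1$). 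Writing $K$ for the cut of stages reached and $W_0=\{w_k:k\in K\}$, the essential observations are that $W_0$ is unbounded --- otherwise the runs of the recursion would all be coded below a single element, making $K$ a coded cut closed under successor, which is absurd --- and that each witness needed for arguments $u<w_j$ appears below $w_{j+1}$, for consecutive $w_j<w_{j+1}$ in $W_0$. To obtain a literally $\Delta_1(Z)$-definable set, note that the index set $J=\{i_k:k\in K\}$ and its complement $I\setminus J=\{i:\exists k\,(i_k<i<i_{k+1})\}$ are $\Sigma^0_1$-definable and bounded, being subsets of the proper cut $I$; by Theorem~\ref{lem:chong-mourad} there is $d$ with $\Ack(d)\cap I=J$, and $W=\{c_i:i\in\Ack(d)\}$ is $\Delta_1(Z)$-definable, unbounded, and equals $W_0$. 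Finally, verify $\alpha_\varphi(X,Z)$ with this $W$: for its $k$-th element $w_k$ take $y=Y\cap[0,w_k]$; for each $j<k$ the choice $w_{j+1}>B_j$ gives $\ab{u}{w_j}\eb{v}{w_{j+1}}\delta(X,Y,u,v)$, each such witness is below $w_{j+1}\le w_k$, and the normalization of $\delta$ lets us replace $Y$ by $y$.

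For part~\ref{pt:rs<}, work in $\WKL^*_0$, fix $X$, $Z$, and an unbounded $\Delta_1(Z)$-definable $W=\{w_i:i\in I\}$ witnessing $\alpha_\varphi(X,Z)$. Let $T$ be the set of binary strings $\sigma$ such that $\ab{u}{w_i}\eb{v}{w_{i+1}}\delta(X,\sigma,u,v)$ holds for every $i$ with $w_{i+1}$ defined and $\le|\sigma|$ (the second argument of $\delta$ being the finite set coded by $\sigma$). Using $w_i\ge i$ to bound the quantifier over $i$, together with $\Delta_1(Z)$-definability of $W$, $T$ is $\Delta_1(X\oplus Z)$-definable, hence a set; and $T$ is closed under initial segments, because if $\tau$ is an initial segment of $\sigma\in T$ and $w_{i+1}\le|\tau|$ then the relevant witnesses $v\le w_{i+1}\le|\tau|$ see only bits on which $\tau$ and $\sigma$ agree. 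For each $i$, the string of length $w_i$ coding the finite set $y\subseteq[0,w_i]$ supplied by $\alpha_\varphi(X,Z)$ lies in $T$, so since the $w_i$ are unbounded, $T$ is infinite. By $\WKL$ pick an infinite path $Y\in\X$ through $T$. Then $Y$ witnesses $\varphi(X)$: given $u$, choose $i\in I$ with $w_i>u$; the initial segment $Y\upharpoonright w_{i+1}$ lies in $T$ and has length $w_{i+1}$, so the instance $i$ in the definition of $T$ gives $\eb{v}{w_{i+1}}\delta(X,Y\upharpoonright w_{i+1},u,v)$, whence $\exists v\,\delta(X,Y,u,v)$ by the normalization of $\delta$.

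I expect the main obstacle to lie in part~\ref{pt:rs>}: the thinning of $C$ inherently uses $Y$, so some care is needed to ``launder'' it into a genuinely $\Delta_1(Z)$-definable set, which is precisely what the Chong--Mourad coding of the bounded index sets inside the proper cut $I$ achieves; and one must separately check that the thinned set remains unbounded, a delicate cut-and-coding argument of the kind that is routine but easy to mishandle when $\Sigma^0_1$-induction is unavailable. By contrast, the compactness half, part~\ref{pt:rs<}, and the bookkeeping with the normalization of $\delta$ should be straightforward once these points are settled.
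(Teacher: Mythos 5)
Your proof is correct, and your part \ref{pt:rs<} coincides with the paper's argument: the same tree of finite approximations paced by $W$, followed by an application of $\WKL$ and the observation that the consecutive-element conditions transfer from initial segments to the path. For part \ref{pt:rs>} the paper is organized slightly differently but uses the same tools: it first applies $\bd\Sigma^0_1$ to extract from the witness $Y$ an unbounded set $\widehat W=\{\widehat w_j: j\in J\}$ of iterated witness bounds, and then invokes Lemma \ref{lem:sparse-set}\ref{pt:sparse:sparse} to produce a $\Delta_1(Z)$-definable unbounded $W$ interleaved with $\widehat W$, whereas you fuse these two steps into a single recursion along the $\Sigma_1(Z)$-indexed cofinal set $C$ and then carry out the Chong--Mourad coding (Theorem \ref{lem:chong-mourad}) of the bounded index sets inside the cut $I$ by hand; since the proof of Lemma \ref{lem:sparse-set}\ref{pt:sparse:sparse} is exactly this coding trick, you are in effect re-proving that lemma inline rather than citing it. The unboundedness argument you sketch (a bounded family of runs would make the stage cut coded and successor-closed, contradicting $\ind\Delta^0_0$) is the right verification and of the same kind the paper leaves implicit, and your explicit normalization that $\delta$ inspects $Y$ only below $v$ is a standard, harmless adjustment that the paper also uses tacitly when passing between $Y$ and its bounded truncations.
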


\begin{proof}
Let $\varphi(X)$ be $\exists Y \, \forall u\, \exists v\, \delta(X,Y, u,v)$
with quantifiers in $\delta$ bounded by $v$.

We first prove \ref{pt:rs<}. Let $X, Z$ be such that $\alpha_\varphi(X,Z)$ holds,
and let the set $W = \{w_i : i \in I\}$ witness the existential quantifier in $\alpha_\varphi(X,Z)$.
Let $T$ be the tree consisting of finite 0--1 strings $\tau$ such that for every $i$ satisfying
$w_{i+1} < |\tau|$, we have $\ab{u}{w_i}\eb{v}{w_{i+1}}\delta(X,\{x:\tau(x)=1\},u,v)$.
By the choice of $W$, the tree $T$ is infinite, so by $\WKL$ there is an infinite
path $Y$ in $T$. Clearly, we have $\ab{u}{w_i}\eb{v}{w_{i+1}}\delta(X,Y,u,v)$
for each $i \in I$, which implies $\varphi(X)$.

Turning to \ref{pt:rs>}, assume that we have $\varphi(X)$, and let $Y$ witness the existential set quantifier in $\varphi(X)$.
By $\bd\Sigma^0_1$, for every number $a$ there is some $b$ such that
$\ab{u}{a}\eb{v}{{b}}\delta(X,Y,u,v)$. This implies the existence
of an unbounded set $\widehat W = \{\widehat w_j : j \in J\}$ such that
for every $j \in J$ we have $\ab{u}{\widehat w_j}\eb{v}{\widehat w_{j+1}}\delta(X,Y,u,v)$.

Now let $Z$ be such that $\neg \ind\Sigma_1(Z)$ holds.
By Lemma \ref{lem:sparse-set}\ref{pt:sparse:sparse} with $S\defeq \widehat W$,
there exists an unbounded $\Delta_1(Z)$-definable set $W = \{w_i: i \in I\}$
such that for each $i \in I$ there is some $j \in J$ with $w_i \le \widehat w_j < \widehat w_{j+1} \le w_{i+1}$.
As a consequence, we have $\ab{u}{\widehat w_i}\eb{v}{\widehat w_{i+1}}\delta(X,Y,u,v)$ for each $i \in I$.
But this means in particular that $W$ has the property required in $\alpha_\varphi(X,Z)$.
\end{proof}

\begin{corollary}Let $\psi$ be an $\rp$ sentence of the form
$\forall X\, (\eta(x) \to \varphi(X))$, where $\varphi$ is $\rs$.
Then $\psi$ is $\Pi^1_1$-conservative over $\RCA^*_0 + \neg \ind \Sigma^0_1$
if and only if $\RCA^*_0$ proves $\forall X\, \forall Z \, (\neg \ind \Sigma_1(Z) \land \eta(X) \to \alpha_\varphi(X,Z))$.
\end{corollary}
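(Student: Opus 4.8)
The plan is to follow the same pattern as the proof of Theorem~\ref{thm:pi11-cons-over-not-is1}, but with Theorem~\ref{thm:rs-arithmetical} playing the role that Corollary~\ref{cor:analytic-hierarchy-collapses} played there. Write $\beta$ for the sentence $\forall X\, \forall Z \, (\neg \ind \Sigma_1(Z) \land \eta(X) \to \alpha_\varphi(X,Z))$. The first thing to note is that $\beta$ is a $\Pi^1_1$ sentence: its matrix is arithmetical in the set parameters $X$ and $Z$ — this is clear for $\neg \ind \Sigma_1(Z)$ and $\eta(X)$, and one checks by inspecting Definition~\ref{def:alpha-sigma} that $\alpha_\varphi(X,Z)$ is also arithmetical (quantification over ``$\Delta_1(Z)$-definable sets'' being quantification over pairs of G\"odel numbers of equivalent $\Sigma_1(Z)$ and $\Pi_1(Z)$ formulas). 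The content of Theorem~\ref{thm:rs-arithmetical} is essentially that $\psi$ and $\beta$ are equivalent over $\WKL^*_0 + \neg \ind \Sigma^0_1$: part~\ref{pt:rs>} gives one implication over $\RCA^*_0$, part~\ref{pt:rs<} gives the other over $\WKL^*_0$, and $\neg \ind \Sigma^0_1$ is exactly what is needed to produce a set $Z$ with $\neg \ind \Sigma_1(Z)$ to feed into $\beta$.

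For the direction from right to left, I would assume $\RCA^*_0 \vdash \beta$ and argue that $\WKL^*_0 + \neg \ind \Sigma^0_1 \vdash \psi$: in any model of $\WKL^*_0 + \neg \ind \Sigma^0_1$ and for any $X$ satisfying $\eta(X)$, one picks $Z$ witnessing $\neg \ind \Sigma_1(Z)$, obtains $\alpha_\varphi(X,Z)$ from $\beta$, and then $\varphi(X)$ from Theorem~\ref{thm:rs-arithmetical}\ref{pt:rs<}. Conservativity of $\psi$ over $\RCA^*_0 + \neg \ind \Sigma^0_1$ then follows either by quoting Theorem~\ref{thm:pi11-cons-over-not-is1} or, directly, from Simpson--Smith~\cite{simpson-smith} together with a routine union-of-chains argument.

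For the direction from left to right, I would assume $\psi$ is $\Pi^1_1$-conservative over $\RCA^*_0 + \neg \ind \Sigma^0_1$ and first check that $\RCA^*_0 + \neg \ind \Sigma^0_1 + \psi \vdash \beta$: given $X,Z$ with $\neg \ind \Sigma_1(Z)$ and $\eta(X)$, the sentence $\psi$ delivers $\varphi(X)$ and then Theorem~\ref{thm:rs-arithmetical}\ref{pt:rs>} (which needs only $\RCA^*_0$) delivers $\alpha_\varphi(X,Z)$. Since $\beta$ is $\Pi^1_1$, conservativity gives $\RCA^*_0 + \neg \ind \Sigma^0_1 \vdash \beta$. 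On the other hand, $\RCA^*_0 + \ind \Sigma^0_1 \vdash \beta$ holds vacuously, because under $\ind \Sigma^0_1$ the hypothesis $\neg \ind \Sigma_1(Z)$ is never satisfied. As every model of $\RCA^*_0$ satisfies $\ind \Sigma^0_1$ or its negation, we conclude $\RCA^*_0 \vdash \beta$.

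I do not expect any substantial obstacle here: the corollary is a fairly mechanical consequence of Theorem~\ref{thm:rs-arithmetical}. The only two points that need a moment's attention are the verification that $\beta$ is genuinely $\Pi^1_1$ (so that $\Pi^1_1$-conservativity can be applied to it) and the standard device of splitting on $\ind \Sigma^0_1$ to pass from ``$\RCA^*_0 + \neg \ind \Sigma^0_1 \vdash \beta$'' to ``$\RCA^*_0 \vdash \beta$''.
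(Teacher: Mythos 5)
Your proposal is correct and is essentially the intended argument: the paper states this corollary without a separate proof precisely because it follows from Theorem~\ref{thm:rs-arithmetical} combined with Theorem~\ref{thm:pi11-cons-over-not-is1} (equivalently, Simpson--Smith) in exactly the way you describe, including the two points you flag --- that $\alpha_\varphi$ and $\neg\ind\Sigma_1(Z)$ are arithmetical so the target sentence is $\Pi^1_1$, and the split on $\ind\Sigma^0_1$ to pass from provability in $\RCA^*_0+\neg\ind\Sigma^0_1$ to provability in $\RCA^*_0$. No gaps; the union-of-chains remark is not even needed once $\WKL^*_0+\neg\ind\Sigma^0_1\vdash\psi$ is established, but it is harmless.
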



We now consider what can be said about $\RT^2_2$ using techniques based on Lemma \ref{lem:sparse-set}.
As in the proof of Corollary \ref{cor:pi05-cons} in Section \ref{subsec:rt22-iso},
the only specific features of $\RT^2_2$ needed below are that it is an $\rp$ sentence,
implies $\bd\Sigma^0_2$, and is $\Pi^1_1$-conservative over $\RCA_0 + \ind \Sigma^0_2$.

Let $\zeta(f,Z)$ express the following:

\begin{center}
\begin{minipage}{0.85\textwidth}
If $f \colon [\N]^2 \to 2$, then there is an unbounded $\Delta_2(Z)$-set \mbox{$\extset W = \{w_i : i \in I\}$}
such that for every $i \in I$ there is a pair of finite strings $\tuple{\sigma,\tau}$
satisfying:
\begin{itemize}
\item $|\sigma| = |\tau| = w_i$,
\item $\{x: \sigma(x) = 1\}$ is homogeneous for $f$,
\item for each $j < i$, there are at least $w_j$ elements $x \le w_{j+1}$ such that $\sigma(x) = 1$,
\item for each $j < i$ and each $e \le w_j$, if $\tau(e) = 1$, then there is a computation $s \le w_{j+1}$
witnessing $e^{Z \oplus \sigma}{\downarrow}$, and if $\tau(e) = 0$, then $e^{Z \oplus \sigma}{\uparrow}$.
\end{itemize}
\end{minipage}
\end{center}

Loosely speaking, $\zeta$ says that, assuming $f$ is a $2$-colouring of pairs,
there is a $\Delta_2(Z)$-definable infinite tree of finite approximations
to an infinite homogeneous set for $f$ and to the jump of the join of that set with $Z$.
Note that $\zeta$ can be written as a $\Sigma^0_4$ formula, so the statement
$\forall f\, \forall Z\,(\neg \ind \Sigma_2(Z) \to \zeta(f,Z))$ is $\forall \Pi^0_5$.

\begin{theorem}\label{thm:rt22-proves-apx}
$\RCA_0 + \RT^2_2$ proves $\forall f\, \forall Z\,(\neg \ind \Sigma_2(Z) \to \zeta(f,Z))$.
$\RCA_0 + \bd\Sigma^0_2$ proves that statement if and only if
$\RT^2_2$ is $\Pi^1_1$-conservative over $\bd\Sigma^0_2$.
\end{theorem}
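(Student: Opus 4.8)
The plan is to treat the two assertions in turn. Provability of $\forall f\,\forall Z\,(\neg\ind\Sigma_2(Z)\to\zeta(f,Z))$ in $\RCA_0+\RT^2_2$ is the level-$2$ incarnation of the analysis of $\rs$~formulas in Theorem~\ref{thm:rs-arithmetical}; the equivalence with $\Pi^1_1$-conservativity of $\RT^2_2$ over $\bd\Sigma^0_2$ then follows by combining this with the known $\Pi^1_1$-conservativity of $\RT^2_2$ over $\RCA_0+\ind\Sigma^0_2$ \cite{cholak-jockusch-slaman}, in the manner of Theorem~\ref{lem:criterion-pi11-cons-over-bd-not-ind} and of the discussion in Section~\ref{subsec:rt22-iso}.

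For the first assertion, let $\varphi$ be the $\rs$~formula, in free set variables $f$ and $Z$, saying ``there is a set coding a pair $\tuple{H,J}$ such that $H$ is an infinite $f$-homogeneous set and $J$ is the jump of $Z\oplus H$'' (the clause ``$J$ is the jump of $Z\oplus H$'' is $\Pi^0_2$, so this is a legitimate $\rs$~formula). A routine unwinding shows that $\zeta(f,Z)$ is, modulo the standard identification of $\Delta_1(Z')$- with $\Delta_2(Z)$-definability and some bookkeeping, the arithmetical formula obtained from Definition~\ref{def:alpha-sigma} by taking ``$X$'' to be the pair $\tuple{f,Z}$ and the set witnessing the failure of $\Sigma_1$-induction to be $Z'$; the only way in which $\zeta$ is formally stronger is that it asks $\tau$ to decide halting of $e^{Z\oplus\sigma}$ outright rather than within a bound, and this stronger clause will be met for free because the strings $\tau$ produced below will be restrictions of a genuine jump (using the convention that querying a finite oracle beyond its length diverges). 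I would then argue inside $\RCA_0+\RT^2_2$: fix a colouring $f$ and a set $Z$ with $\neg\ind\Sigma_2(Z)$. Since $\RCA_0+\RT^2_2\vdash\bd\Sigma^0_2$, the $\Delta^0_2$-definable sets form a model of $\RCA^*_0+\neg\ind\Sigma^0_1$ in which $Z'$ exists as a set and witnesses $\neg\ind\Sigma_1(Z')$. By $\RT^2_2$ there is an infinite $f$-homogeneous $H$, and $(Z\oplus H)'$ exists as a $\Delta^0_2$-set, so $\varphi$ holds in that submodel, witnessed by a code for $\tuple{H,(Z\oplus H)'}$. Applying Theorem~\ref{thm:rs-arithmetical}\ref{pt:rs>} inside the submodel, with its argument instantiated by $\tuple{f,Z}$ and its ``non-induction'' set instantiated by $Z'$, yields exactly $\zeta(f,Z)$. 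Concretely this means: use $\bd\Sigma^0_2$ to get an unbounded $\Delta^0_2$-definable set along which the pair $\tuple{H,(Z\oplus H)'}$ ``converges'', thin it by the $n=2$ case of Lemma~\ref{lem:sparse-set}\ref{pt:sparse:sparse} (with parameter $Z$) to an unbounded $\Delta_2(Z)$-definable set $\extset W=\{w_i:i\in I\}$, and take the pair at level $i$ to be $\tuple{\chi_H{\upharpoonright}_{w_i},\chi_{(Z\oplus H)'}{\upharpoonright}_{w_i}}$; the sparseness delivers both the density of the homogeneous-set approximation and the bounds on the lengths of the relevant jump computations, while using the genuine jump for $\tau$ delivers the outright-halting clause.

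The left-to-right direction of the equivalence is immediate, since $\forall f\,\forall Z\,(\neg\ind\Sigma_2(Z)\to\zeta(f,Z))$ is $\Pi^1_1$ and, by the first assertion, provable in $\RCA_0+\RT^2_2$: if $\RT^2_2$ is $\Pi^1_1$-conservative over $\bd\Sigma^0_2$, then $\RCA_0+\bd\Sigma^0_2$ proves it. For the converse, assume $\RCA_0+\bd\Sigma^0_2\vdash\forall f\,\forall Z\,(\neg\ind\Sigma_2(Z)\to\zeta(f,Z))$ and let $\theta$ be $\Pi^1_1$ with $\RCA_0+\bd\Sigma^0_2\nvdash\theta$; I must show $\RCA_0+\RT^2_2\nvdash\theta$. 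If even $\RCA_0+\ind\Sigma^0_2\nvdash\theta$, this is given by \cite{cholak-jockusch-slaman}. Otherwise $\RCA_0+\ind\Sigma^0_2\vdash\theta$, so a countable model $(M,\X)\models\RCA_0+\bd\Sigma^0_2+\neg\theta$ must satisfy $\neg\ind\Sigma^0_2$; fix $Z_0\in\X$ witnessing this and a set witnessing $\neg\theta$. By a standard union-of-chains argument (write $(M,\X)$ as an increasing $\omega$-chain of topped submodels, fix bookkeeping, and keep $Z_0$ and the $\neg\theta$-witness in the base) it is enough to carry out the following one-step extension: given a countable topped $(M,\Delta^0_1\text{-}\mathrm{Def}(M,A))\models\RCA_0+\bd\Sigma^0_2$ with $\neg\ind\Sigma_2(A)$ (which holds whenever $A\ge_\mathrm{T}Z_0$) and a $\Delta_1(A)$-definable colouring $f$, produce $H\subseteq M$ that is infinite and $f$-homogeneous with $(M,A\oplus H)\models\bd\Sigma^0_2$. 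Here one uses $\zeta(f,A)$ — valid by hypothesis — to assemble a $\Delta_2(A)$-definable, equivalently $\Delta_1(A')$-definable, infinite $0$--$1$ tree $\widetilde T$ whose nodes are the valid finite pairs $\tuple{\sigma,\tau}$, so that an infinite path through $\widetilde T$ is a pair $\tuple{H,J}$ with $H$ infinite and $f$-homogeneous and $J\equiv_\mathrm{T}(A\oplus H)'$. Since $(M,A)\models\bd\Sigma^0_2$, the set $A'$ tops a model of $\RCA^*_0$, so by the case $n=1$ of Lemma~\ref{lem:d0n-wkl-cons} we may $\bbomega$-extend $(M,\Delta^0_1\text{-}\mathrm{Def}(M,A'))$ to a model of $\WKL^*_0$, inside which $\widetilde T$ acquires an infinite path $C=\tuple{H,J}$ with $\bd\Sigma_1(C)$; as $C\equiv_\mathrm{T}J\equiv_\mathrm{T}(A\oplus H)'$ this says $(M,(A\oplus H)')\models\bd\Sigma^0_1$, i.e.\ $(M,A\oplus H)\models\bd\Sigma^0_2$. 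Then $(M,\Delta^0_1\text{-}\mathrm{Def}(M,A\oplus H))$ — a topped model of $\RCA_0+\bd\Sigma^0_2$, since $\bd\Sigma^0_2$ implies $\ind\Sigma^0_1$ — is the required extension. This is the ``single jump control'' argument of \cite{cholak-jockusch-slaman}, carried out relative to a set $A\ge_\mathrm{T}Z_0$ witnessing $\neg\ind\Sigma^0_2$, exactly as anticipated in Section~\ref{subsec:rt22-iso}.

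I expect the main obstacle to lie in the first assertion: one has to verify in detail that the level-$2$ relativisation of the proof of Theorem~\ref{thm:rs-arithmetical}\ref{pt:rs>} goes through with the jump of $Z\oplus H$ appended to the witnessing object, and that the arithmetical statement it produces is exactly (a strengthening of) $\zeta$ as written — matching quantifier complexities, pinning down the finite-approximation bookkeeping, and handling the $\Pi^0_1$ ``divergence'' clause by which $\zeta$ exceeds the corresponding $\alpha_\varphi$. In the converse direction the one delicate point, familiar in this area, is that the homogeneous set added at each step must preserve $\bd\Sigma^0_2$; this is ensured not by a naive appeal to $\Delta^0_2$-$\WKL$ comprehension — which would only yield a $\Delta^0_2$-definable solution, insufficient to control $(A\oplus H)'$ — but by extracting the path through $\widetilde T$ together with (a code for) the jump of its join with $A$, via the Simpson--Smith construction over $(M,A')$ underlying Lemma~\ref{lem:d0n-wkl-cons}.
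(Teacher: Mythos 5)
Your proof is correct, and its first half is essentially the paper's own argument: obtain $H$ from $\RT^2_2$, use $\bd\Sigma^0_2$ to get an unbounded $\Delta^0_2$-definable ``modulus'' set recording both the density of $H$ and the settling of the jump of $H\oplus Z$, thin it via Lemma~\ref{lem:sparse-set}\ref{pt:sparse:sparse} (relative to $Z'$) to a $\Delta_2(Z)$-definable set, and read off the pairs $\tuple{\sigma,\tau}$ as restrictions of $\chi_H$ and $\chi_{(H\oplus Z)'}$ --- including your correct observation that the outright-divergence clause of $\zeta$ goes beyond a literal instance of Theorem~\ref{thm:rs-arithmetical}\ref{pt:rs>} and is met by taking $\tau$ from the genuine jump. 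Where you genuinely diverge is in the one-step extension for the conservativity direction. The paper $\bbomega$-extends $(M,A)$ to a model of $\RCA_0+\bd\Sigma^0_2+\COH$ using the conservation theorem of \cite{csy:conservation-weaker}, and then invokes Belanger's equivalence of $\COH$ with $\Delta^0_2$-$\WKL$ to get a $\Delta^0_2$-definable path through the tree supplied by $\zeta(f,A)$, in a model still satisfying $\bd\Sigma^0_2$. You instead note that the tree is $\Delta_1(A')$-definable, apply only the $n=1$ case of Lemma~\ref{lem:d0n-wkl-cons} (i.e.\ Simpson--Smith \cite{simpson-smith}) to $(M,\Delta^0_1\text{-}\mathrm{Def}(M,A'))$ to obtain a path $C=\tuple{H,J}$ with $\bd\Sigma_1(C)$, and exploit the fact that the path already codes $J=(A\oplus H)'$ to conclude $(M,(A\oplus H)')\models\bd\Sigma^0_1$ and hence $(M,A\oplus H)\models\bd\Sigma^0_2$. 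Both routes end at the same key inference; yours is more economical in its external inputs (no $\COH$ conservation, no Belanger equivalence, no jump inversion --- only $\WKL^*_0$-extendibility at the level of $A'$), while the paper's version keeps the argument aligned with the $\Delta^0_2$-$\WKL$/$\COH$ framework of Section~\ref{sec:generalization} and with the ``single jump control'' reading emphasized in Section~\ref{subsec:rt22-iso}. The remaining scaffolding (handling the $\ind\Sigma^0_2$ case via \cite{cholak-jockusch-slaman}, the union-of-chains reduction as in Theorem~\ref{lem:criterion-pi11-cons-over-bd-not-ind}, and the trivial converse direction since the statement is $\forall\Pi^0_5$) matches the paper.
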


\begin{proof}
We first show that $\RCA_0 + \RT^2_2$ proves $\forall f\, \forall Z\,(\neg \ind \Sigma_2(Z) \to \zeta(f,Z))$.
Let $(M,\X) \models \RCA_0 + \RT^2_2$. Let $f  \in \X$ be a $2$-colouring of pairs from $M$,
and let $Z \in \X$ be such that $\ind \Sigma_2(Z)$ fails.
By $\RT^2_2$, there is $H \in \X$ which is an infinite homogeneous set for $f$.
Since $(M, \X)$ satisfies $\bd \Sigma^0_2$, there is an unbounded $\Delta^0_2$-set
$\extset S = \{s_j : j \in J\}$ in $(M,\X)$ such that for each
$j$, there are at least $s_j$ elements of $H$ below $s_{j+1}$,
and each machine $e \le s_j$ run with oracle $H \oplus Z$
either stops before $s_{j+1}$ or does not stop at all.

By Lemma \ref{lem:sparse-set}\ref{pt:sparse:sparse} applied to $(M,\Delta^0_2\textrm{-Def}(M, \X))$
with $S: =\extset S$ and $A \defeq Z'$,
there exists an unbounded $\Delta_2(Z)$-definable
set $\extset W = \{w_i : i \in I\}$ such that there are at least two
elements of $\extset S$ between any two elements of $\extset W$.
We claim that $\extset W$ witnesses that $\zeta(f,Z)$ holds.
To see this, consider fixed $i \in I$.
Let $\sigma$ be the characteristic function of $H \cap [0,w_i]$,
and let $\tau$ be the characteristic function of $(H \oplus Z)' \cap [0,w_i]$.
Then the pair $\tuple{\sigma,\tau}$ has the properties required by $\zeta(f,Z)$ for this $i$.
This shows that $(M, \X) \models \forall f\, \forall Z\,(\neg \ind \Sigma_2(Z) \to \zeta(f,Z))$.

Thus, we have proved that $\RCA_0 + \RT^2_2$ implies $\forall f\, \forall Z\,(\neg \ind \Sigma_2(Z) \to \zeta(f,Z))$.
Now assume that $\RCA_0 + \bd \Sigma^0_2$ proves that statement as well.
As in the proof of Lemma \ref{lem:criterion-pi11-cons-over-bd-not-ind},
to prove the $\Pi^1_1$-conservativity of $\RT^2_2$ over $\RCA_0 + \bd \Sigma^0_2 + \neg \ind \Sigma^0_2$,
and thus over $\RCA_0 + \bd \Sigma^0_2$,
it is enough to show that for any countable $(M,A) \models \bd\Sigma^0_2 + \neg \ind \Sigma^0_2$
and any $\Delta_1(A)$-definable $2$-colouring of pairs $f$,
there is $\extset H \subseteq M$ unbounded homogeneous for $f$
such that $(M, A \oplus \extset H)$ still satisfies $\bd\Sigma^0_2$.

By \cite{csy:conservation-weaker}, there is a model $(M,\X) \models \RCA_0 + \bd\Sigma^0_2 + \COH$
with $A \in \X$. By our assumption, $\zeta(f,A)$ holds in $(M,\X)$. Since
$\COH$ implies $\Delta^0_2$-$\WKL$ over $\RCA_0 + \bd\Sigma^0_2$,
there is a $\Delta^0_2$-set in $(M,\X)$ which
is an infinite path in the infinite $\Delta_2$-definable 0--1 tree provided by $\zeta(f,A)$.
Thus, there is an unbounded $\Delta^0_2$-set $\extset H$
in $(M,\X)$ such that $\extset H$ is homogeneous for $f$
and $(A \oplus \extset H)'$ is a $\Delta^0_2$-set in $(M,\X)$.
But this means that $(M,\leszek{(}A \oplus \extset H)'\leszek{)} \models \bd\Sigma^0_1$,
so $(M,A \oplus \extset H) \models \bd\Sigma^0_2$.
\end{proof}


By Theorem \ref{thm:rt22-proves-apx},
the statement $\forall f\, \forall Z\,(\neg \ind \Sigma_2(Z) \to \zeta(f,Z))$
can be used to give an alternative proof of Corollary \ref{cor:pi05-cons}.
A possible advantage of that statement over the sentence $\gamma_{\RT^2_2}$
is that the restriction of the latter to computable instances is trivially true in any model of $\bd \Sigma_1 + \exp$,
because in such a model there can never be computable sets $A,Z,X$
such that $X \ll^n_A Z$ (cf.~Lemma \ref{lem:non-arith}).
On the other hand, the $\Pi_5$ sentence obtained by restricting $\zeta(f, Z)$
to computable colourings $f$ and computable $Z$
(that is, essentially, to situations where $\ind \Sigma_2$ fails)
seems more interesting, and it is quite unclear whether
$\bd \Sigma_2$ proves it.


\section{Solution to a problem of Towsner}\label{sec:towsner}

In \cite{towsner:maximum-conservative}, Towsner proved that for every
$n \ge 1$, the set of $\Pi^1_2$ sentences $\psi$ which are $\Pi^1_1$-conservative
over $\RCA_0 + \ind\Sigma^0_n$ is $\Pi_2$-complete.

Towsner also asked whether the same holds with $\ind \Sigma^0_n$ replaced by $\bd \Sigma^0_n$.
The question as stated is only really meaningful for $n \ge 2$, because
$\RCA_0 + \bd \Sigma^0_1$ is simply $\RCA_0$. So, in order to generalize
the question to $n = 1$ we take the liberty of changing the base theory to $\RCA^*_0$.

\begin{question}[essentially Towsner \cite{towsner:maximum-conservative}]\label{q}
For fixed $n$, is the set
\begin{center}
$\{\psi \in \Pi^1_2: \RCA^*_0 + \bd\Sigma^0_n + \psi \textrm{ is } \Pi^1_1\textrm{-conservative over }
\RCA^*_0 + \bd\Sigma^0_n\}$
\end{center}
$\Pi_2$-complete?
\end{question}

By Theorem \ref{thm:pi11-cons-over-not-is1} and Corollary \ref{cor:rec-axiom},
the answer to Question \ref{q} is ``almost negative'', in that
the set of $\Pi^1_2$ sentences $\psi$ which are $\Pi^1_1$-conservative
over $\RCA^*_0 + \bd \Sigma^0_n + \neg \ind\Sigma^0_n$ is \leszek{c.e.~}
for each $n$.
For $n = 1$, we even know that this set is finitely axiomatizable.

Below we show that the original question,
without the explicitly added $\neg \ind\Sigma^0_n$, nevertheless has a positive answer.

\begin{theorem}\label{thm:positive-solution}
For every $n \ge 1$, the set
\begin{center}
$\{\psi \in \Pi^1_2: \RCA^*_0 + \bd\Sigma^0_n + \psi \textrm{ is } \Pi^1_1\textrm{-conservative over }
\RCA^*_0 + \bd\Sigma^0_n\}$
\end{center}
is $\Pi_2$-complete.
\end{theorem}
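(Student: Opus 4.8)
The plan is to prove the two halves of Theorem~\ref{thm:positive-solution} separately. Membership in $\Pi_2$ is routine: a $\Pi^1_2$ sentence $\psi$ fails to be $\Pi^1_1$-conservative over $\RCA^*_0 + \bd\Sigma^0_n$ precisely when there is a $\Sigma^1_1$ sentence $\xi$ with $\RCA^*_0 + \bd\Sigma^0_n + \xi$ consistent and $\RCA^*_0 + \bd\Sigma^0_n + \psi + \xi$ inconsistent; since the $\Sigma^1_1$ sentences form a computable set, consistency of a computably axiomatized theory is $\Pi_1$ and inconsistency is $\Sigma_1$, this description is $\Sigma_2$ in $\psi$, so the set in the statement is $\Pi_2$. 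For $\Pi_2$-hardness I would give a many-one reduction from Towsner's $\Pi_2$-complete problem~\cite{towsner:maximum-conservative}, namely the set of $\Pi^1_2$ sentences that are $\Pi^1_1$-conservative over $\RCA_0 + \ind\Sigma^0_n$ (which equals $\RCA^*_0 + \ind\Sigma^0_n$ for $n \ge 1$).

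Write $T \defeq \RCA^*_0 + \bd\Sigma^0_n$, $T_1 \defeq T + \ind\Sigma^0_n$ and $T_2 \defeq T + \neg\ind\Sigma^0_n$. Given a $\Pi^1_2$ sentence $\psi$, I would take $\psi^* \defeq \psi \vee \neg\ind\Sigma^0_n$; using a universal $\Sigma_n$ formula, $\neg\ind\Sigma^0_n$ is a $\Sigma^1_1$ sentence, so $\psi^*$ can be rewritten as a $\Pi^1_2$ sentence, and $\psi \mapsto \psi^*$ is computable. The target equivalence is: $\psi^*$ is $\Pi^1_1$-conservative over $T$ if and only if $\psi$ is $\Pi^1_1$-conservative over $\RCA_0 + \ind\Sigma^0_n$. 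The easy direction rests on a structural observation: every model of $T$ satisfies exactly one of $\ind\Sigma^0_n$ and $\neg\ind\Sigma^0_n$, so a $\Sigma^1_1$ sentence is consistent with $T$ iff it is consistent with $T_1$ or with $T_2$; it follows at once that a $\Pi^1_2$ sentence that is $\Pi^1_1$-conservative over both $T_1$ and $T_2$ is $\Pi^1_1$-conservative over $T$. Now $T_2 \vdash \psi^*$, so $\psi^*$ is trivially $\Pi^1_1$-conservative over $T_2$; and in models of $T_1$ one has $\psi^* \leftrightarrow \psi$, so $\psi^*$ is $\Pi^1_1$-conservative over $T_1$ iff $\psi$ is. Hence, if $\psi$ is $\Pi^1_1$-conservative over $\RCA_0 + \ind\Sigma^0_n$, then $\psi^*$ is $\Pi^1_1$-conservative over $T$.

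For the converse I would argue contrapositively, and this is where the work lies. Fix a $\Sigma^1_1$ sentence $\xi_0$ consistent with $\RCA_0 + \ind\Sigma^0_n$ but inconsistent with $\RCA_0 + \ind\Sigma^0_n + \psi$, and set $\xi \defeq \xi_0 \wedge \neg\mathrm{C}\Sigma^0_{n+1}$, which is again $\Sigma^1_1$ since the principle $\mathrm{C}\Sigma^0_{n+1}$ is $\Pi^1_1$. Using that $\neg\ind\Sigma^0_n$ implies $\mathrm{C}\Sigma^0_{n+1}$ over $T$ (see Theorem~\ref{thm:neg-is1-cs2} and the proof of Proposition~\ref{prop:neg-isn-aca0}), every model of $T + \xi$ satisfies $\ind\Sigma^0_n$, hence is a model of $\RCA_0 + \ind\Sigma^0_n + \xi_0$ and therefore of $\neg\psi$; as $\neg\ind\Sigma^0_n$ then fails, $\neg\psi^*$ holds there, so $T + \psi^* + \xi$ is inconsistent. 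It remains to check that $\xi$ is consistent with $T$, i.e.\ that $\RCA_0 + \ind\Sigma^0_n + \xi_0 + \neg\mathrm{C}\Sigma^0_{n+1}$ is consistent; this is the step I expect to be the main obstacle. I would handle it by using that Towsner's non-conservativity witness $\xi_0$ is of ``expansion type'' — it asserts the existence of a set coding a satisfaction class of bounded complexity, realizable as an $\bbomega$-extension of any countable recursively saturated model of $\RCA_0 + \ind\Sigma^0_n$ (Theorem~\ref{thm:towsner-expansion}). Starting from a countable recursively saturated model of the consistent (by Proposition~\ref{prop:neg-isn-aca0}), computably axiomatized theory $\RCA_0 + \ind\Sigma^0_n + \neg\mathrm{C}\Sigma^0_{n+1}$ and performing this extension, one obtains a model of $\RCA_0 + \ind\Sigma^0_n + \xi_0$ that still satisfies $\neg\mathrm{C}\Sigma^0_{n+1}$, since a $\Sigma^1_1$ sentence persists under $\bbomega$-extensions. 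This produces the desired witness $\xi$, completing the reduction and the proof of Theorem~\ref{thm:positive-solution}.
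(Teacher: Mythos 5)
Your reduction is a genuinely different route from the paper's (the paper does not reduce from Towsner's conservativity set; it reduces directly from truth of $\Pi_2$ sentences in $\omega$, via an explicitly constructed sentence $\psi_\varphi \defeq \neg\ind\Sigma^0_n \lor \exists Z\,\exists a\,(\ldots)$, using the same two ingredients, Theorems \ref{thm:towsner-expansion} and \ref{thm:neg-is1-cs2}). Your membership argument, the easy direction, and the use of $\neg\mathrm{C}\Sigma^0_{n+1}$ to force $\ind\Sigma^0_n$ in any model of $T+\xi$ are all fine. But the step you yourself flag as the main obstacle --- consistency of $\RCA_0 + \ind\Sigma^0_n + \xi_0 + \neg\mathrm{C}\Sigma^0_{n+1}$ --- is justified incorrectly, and this is a genuine gap. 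First, $\xi_0$ is an \emph{arbitrary} $\Sigma^1_1$ witness to the failure of $\Pi^1_1$-conservativity of an arbitrary $\psi$; nothing forces it to be of ``expansion type'', i.e.\ to assert the existence of a satisfaction class or anything else realizable over prescribed models (you are conflating the particular witnesses appearing inside Towsner's hardness proof with the witnesses your reduction must handle). Second, even setting that aside, the construction is run in the wrong order: you fix a recursively saturated model of $\ind\Sigma^0_n + \neg\mathrm{C}\Sigma^0_{n+1}$ first and then try to realize $\xi_0$ by an $\bbomega$-extension, but an arbitrary $\Sigma^1_1$ sentence consistent with $\RCA_0+\ind\Sigma^0_n$ need not be realizable over a given model at all --- for instance $\xi_0$ could be (or imply) a purely arithmetical $\Sigma_1$ sentence that is simply false in the chosen first-order universe, and no $\bbomega$-extension can repair that. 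Theorem \ref{thm:towsner-expansion} says nothing about realizing arbitrary $\Sigma^1_1$ sentences; it only makes a chosen subset of $M$ definable.

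The gap is repairable, and the repair is exactly the move made in the paper's own proof: reverse the order of the construction. Since $\xi_0$ is consistent with $\RCA_0+\ind\Sigma^0_n$, a routine compactness and L\"owenheim--Skolem argument gives a \emph{countable nonstandard} model $(M,\X)\models \RCA_0+\ind\Sigma^0_n+\xi_0$. Now apply Theorem \ref{thm:towsner-expansion} to $(M,\X)$ with $\extset S$ the graph of a bijection between $M$ and $\omega$: the resulting $\bbomega$-extension $(M,\Y)$ still satisfies $\RCA_0+\ind\Sigma^0_n$, satisfies $\xi_0$ because $\Sigma^1_1$ sentences persist under $\bbomega$-extensions, and satisfies $\neg\mathrm{C}\Sigma^0_{n+1}$ because the $\Delta^0_{n+1}$-definable injection of $\N$ into $\omega$ has range bounded by any nonstandard element (no recursive saturation is needed, and the appeal to Proposition \ref{prop:neg-isn-aca0} is not the relevant citation here). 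With this fix your reduction $\psi\mapsto\psi\lor\neg\ind\Sigma^0_n$ goes through and yields $\Pi_2$-hardness from Towsner's theorem as a black box; the trade-off relative to the paper's argument is that the paper's direct reduction from $\Pi_2$-truth is self-contained in the sense that it does not presuppose Towsner's $\Pi_2$-completeness result, only his expansion theorem.
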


To prove this, we recall an important result that Towsner uses as a lemma
in his argument for the $\Pi_2$-completeness of $\Pi^1_1$-conservativity
over $\ind \Sigma^0_n$.

\begin{theorem}\cite{towsner:maximum-conservative}\label{thm:towsner-expansion}
If $(M,\X)$ is a countable model of $\RCA_0 + \ind\Sigma^0_n$ and $\extset S \subseteq M$,
then there is a family $\Y\supseteq\X$ of subsets of $M$ such that $(M,\Y) \models \RCA_0 + \ind\Sigma^0_n$
and $\extset S$ is $\Delta^0_{n+1}$-definable in $(M,\Y)$.
\end{theorem}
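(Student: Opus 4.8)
The plan is to prove the expansion theorem by an external forcing construction that adjoins to $\X$ a single set $G$ coding $\extset S$ into its $n$-th jump while preserving $\RCA_0 + \ind \Sigma^0_n$. We may assume $M$ is nonstandard --- for a model of true first-order arithmetic one simply takes $\Y$ to be the closure of $\X \cup \{\extset S\}$ under join and $\Delta^0_1$-comprehension --- and, since $(M,\X)$ is countable, that $\X$ is countable. It will be enough to produce $G \subseteq M$ such that, writing $\Y$ for the closure of $\X \cup \{G\}$ under join and $\Delta^0_1$-comprehension: (i) $(M,\Y) \models \RCA_0$ holds automatically, since $\Delta^0_1$-comprehension is built in and the rest of $\RCA_0$ follows from $\ind \Sigma^0_1 \subseteq \ind \Sigma^0_n$; (ii) $(M,\Y) \models \ind \Sigma^0_n$ is equivalent to $\ind \Sigma_n(B \oplus G)$ holding for every $B \in \X$, because each set of $\Y$ is $\Delta_1(B \oplus G)$ for some $B \in \X$; and (iii) ``$\extset S$ is a $\Delta^0_{n+1}$-set of $(M,\Y)$'' is equivalent to $\extset S \le_{\mathrm{T}} (B_0 \oplus G)^{(n)}$ for a fixed $B_0 \in \X$.

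First I would set up a forcing over $(M,\X)$ whose conditions are pairs $p = (\sigma,\rho)$, where $\sigma \in M$ codes a finite $0$--$1$ string regarded as an initial approximation to a set $G$, organised for bookkeeping as a two-dimensional array $G = \bigoplus_k G_k$, and $\rho$ is a coded finite \emph{restraint}, i.e.\ a finite system of commitments --- each of complexity at most $\Sigma_{n-1}(B_0)$ --- pinning down the eventual behaviour of finitely many columns $G_k$. For $n = 1$ a restraint simply declares finitely many columns to be eventually constant, and the forcing is Cohen forcing in the style of H\'ajek--Ku\v cera~\cite{hajek-kucera:recursion-is1}. Using that $(M,\X)$ is countable, I would fix an enumeration of all $\X$-coded dense sets needed for $\Delta^0_1$-comprehension and for ruling out proper $\Sigma_n(B\oplus G)$-definable cuts for every $B \in \X$, and interleave with these, for each $k$, a requirement that decides --- according to whether $k \in \extset S$ --- which of two incompatible patterns, each $\Delta_2$ relative to $(B_0 \oplus G)^{(n-1)}$ (hence $\Delta_{n+1}$ relative to $B_0 \oplus G$), is realised on the column $G_k$; then I would build a descending chain of conditions meeting every listed requirement and put $G = \bigcup_s \sigma_s$. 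This also admits a cleaner inductive presentation: the base case $n=1$ is the construction just described, while the step $n \to n+1$ is a jump-inversion step --- uniform over $\X$ and $\RCA_0$-preserving --- applied to the set delivered by the inductive hypothesis for the jump-structure $(M, \{\Delta_1(B') : B \in \X\})$, which models $\RCA_0 + \ind \Sigma^0_n$ whenever $(M,\X) \models \RCA_0 + \ind \Sigma^0_{n+1}$; the known $\bd \Sigma^0$-analogue of that jump-inversion step is precisely Belanger's Theorem~\ref{thm:belanger-jump-inversion}, which one may follow as a template.

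Clause (iii) is then immediate from the coding requirements: the $k$-th pattern is recovered by a single $\Sigma_1$-query to $(B_0 \oplus G)^{(n-1)}$, so $\extset S \le_{\mathrm{T}} (B_0 \oplus G)^{(n)}$. The \emph{main obstacle} is (ii), the preservation of $\ind \Sigma^0_n$. The crucial technical fact is that, by induction on formula complexity and using the partial truth predicates $\Sat_n$ available already over $\ind \Delta_0 + \exp$, the forcing relation ``$p$ forces $\varphi$'', for a standard $\Sigma_n(B \oplus G)$-formula $\varphi$, is itself expressible by a $\Sigma_n$-formula over $(M, B \oplus B_0)$; keeping the restraints at the $\Sigma_{n-1}$-level --- and confining the $\extset S$-coding to the top, $n$-th, level --- is exactly what keeps this forcing relation from rising above $\Sigma_n$. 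Granting this, if some $\Sigma_n(B \oplus G)$-definable set were a proper cut in $(M,\Y)$, then, unwinding through the forcing relation, one would obtain a proper $\Sigma_n(B \oplus B_0)$-definable cut in $(M,\X)$, contradicting $\ind \Sigma_n(B \oplus B_0)$, which holds by hypothesis; preservation of $\ind \Delta_0$, hence of all of $\RCA_0$, is the trivial case. The remaining work --- verifying that the required dense sets are genuinely dense and $\X$-coded, handling the jump-hierarchy bookkeeping, and confirming that a filter meeting the countably many requirements exists --- is routine but laborious, and is where the bulk of the effort lies.
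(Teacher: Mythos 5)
First, note that the paper does not prove Theorem~\ref{thm:towsner-expansion} at all: it is imported verbatim from Towsner, so there is no in-paper argument to compare with. Your forcing plan (code $\extset S$ column-wise into the $n$-th jump of a Cohen-style generic and preserve induction via a forcing-relation complexity computation) is the natural route and is in the general spirit of the cited proof, but the one step that carries all the weight --- preservation of $\ind\Sigma^0_n$ --- does not go through as you state it. Suppose in $(M,\Y)$ some $\Sigma_n(B\oplus G)$ formula $\varphi$ defines a proper cut $I$. What the forcing relation gives you in the ground model is a set such as $\{x : \exists q\le p_0\,(q\Vdash\varphi(\dot x))\}$ for a fixed condition $p_0$ in the generic filter (chosen, say, to force $\neg\varphi(a)$ for one $a\notin I$). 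This set contains $I$ and omits $a$, but it need not be downward closed, its downward closure need not be proper, and it is in general not closed under successor, because conditions outside the filter may force $\varphi$ at scattered large points; so ``unwinding through the forcing relation'' does not produce a $\Sigma_n(B\oplus B_0)$-definable proper cut in $(M,\X)$. In addition, turning the cut hypothesis in the extension into statements of the right complexity requires bounded quantification in front of $\Sigma_n$, i.e.\ collection in the extension, which is not yet available at that point. Fixing this requires purpose-built dense sets (or forcing a least-element/bounding principle directly), and that is the actual content of the theorem, not the ``routine but laborious'' residue.

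Two further gaps. For $n\ge 2$ your direct construction rests on ``restraints of complexity at most $\Sigma_{n-1}(B_0)$'' that are never specified; since they constrain the infinite behaviour of columns, even the compatibility and extension relations between conditions (hence already the forcing of $\Sigma_1$ statements) can climb in complexity, so the claim that forcing a $\Sigma_n$ formula stays $\Sigma_n$ is exactly what has to be designed and verified, and nothing in the sketch does this. Your fallback induction does not discharge it either: the lemma you would need is a jump inversion, with control of the jump, that preserves $\ind\Sigma^0_{n+1}$ simultaneously for every $B\in\X$; Belanger's Theorem~\ref{thm:belanger-jump-inversion} preserves only collection and treats a single base set, so it is a heuristic template rather than a citable ingredient, and proving the $\ind$-preserving, family-wide version is a task of the same kind and difficulty as the direct construction. (A minor slip: the reduction ``we may assume $M$ is nonstandard'' is wrong as phrased, since a nonstandard model of true arithmetic cannot absorb, e.g., the standard cut as a new set; this is harmless only because nothing in your construction actually uses nonstandardness.)
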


\begin{definition}
For each $n \ge 1$, the $\Sigma^0_n$ \emph{cardinality scheme},
$\mathrm{C}\Sigma^0_n$, asserts that no $\Sigma^0_n$ formula
defines a total injection from $\N$ to $\N$ with bounded range.
\end{definition}

\begin{theorem}\label{thm:neg-is1-cs2}\cite{kky:ramsey-rca0star}
For each $n,k \ge 1$, the theory $\RCA^*_0 + \bd\Sigma^{0}_{n} + \neg\ind\Sigma^0_{n}$
proves $\mathrm{C}\Sigma^0_k$.
\end{theorem}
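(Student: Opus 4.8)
The plan is to prove the theorem by contradiction, after reducing it to a single self-contained statement. First, for $k \le n$ no failure of induction is needed: $\bd\Sigma^0_n$ implies $\bd\Sigma^0_k$, and $\bd\Sigma^0_k$ already proves $\mathrm{C}\Sigma^0_k$ over $\RCA^*_0$ (a bounded $\Sigma^0_k$-definable set is then coded by an element, by a level-$k$ relativisation of the Chong--Mourad coding lemma, Theorem~\ref{lem:chong-mourad}, so an alleged $\Sigma^0_k$-definable injection with bounded range becomes a coded injection and contradicts the $\Delta^0_0$-pigeonhole principle, which $\RCA^*_0$ proves). For $k > n$ the hypothesis $\neg\ind\Sigma^0_n$ is used only through the implication $\ind\Sigma^0_k \vdash \ind\Sigma^0_n$, which yields $\neg\ind\Sigma^0_k$. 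Hence the whole theorem reduces to the following claim, which I would isolate as the main lemma:
\[
\RCA^*_0 + \neg\ind\Sigma^0_k \vdash \mathrm{C}\Sigma^0_k \qquad\text{for every } k \ge 1,
\]
or equivalently, over $\RCA^*_0$: if there is a $\Sigma^0_k$-definable total injection $f\colon\N\to[0,b]$ (``$\N$ is $\Sigma^0_k$-finite''), then $\ind\Sigma^0_k$ holds.

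To prove the main lemma, fix a $\Sigma^0_k$-definable total injection $f\colon\N\to[0,b]$, and note its graph is in fact $\Delta^0_k$, since $f(x)\ne y$ is equivalent to $\eb{y'}{b}(y'\ne y \land f(x)=y')$. Arguing toward a contradiction with $\ind\Sigma^0_k$, equivalently with $\ind\Pi^0_k$, fix a $\Pi^0_k$-definable proper cut $I\subsetneq_{\ee}\N$ (obtained from a $\Pi^0_k$ progressive formula that fails somewhere, via $\{x:\ab{y}{x}\theta(y)\}$) and an element $c\notin I$; since $I$ is a cut, such $c$ are cofinal in $\N$. The restriction $f{\upharpoonright}[0,c]$ is an injection of the interval $[0,c]$ into $[0,b]$ with a bounded, $\Delta^0_k$-definable graph. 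The decisive step is to show that, \emph{in the presence of such an $f$ and such an $I$}, this bounded $\Delta^0_k$-definable relation is coded by an element, i.e.\ there is $s\in\N$ with $\Ack(s)=\{\langle x,f(x)\rangle:x\le c\}$. Granting this, $\Ack(s)$ is a genuine injective function from the set $[0,c]$ into the set $[0,b]$, so the pigeonhole principle for coded functions forces $c\le b$; as this holds for arbitrarily large $c$ we obtain the desired contradiction (e.g.\ with $c=b+1$).

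The main obstacle is therefore the coding step, namely a level-$k$ strengthening of the Chong--Mourad lemma: \emph{over $\RCA^*_0 + \neg\ind\Sigma^0_k$, if $\N$ is $\Sigma^0_k$-finite then every bounded $\Sigma^0_k$-definable relation is coded by an element}. For $k=1$ this is just Theorem~\ref{lem:chong-mourad} and needs neither hypothesis, but for $k>n$ the theory at hand does not prove $\bd\Sigma^0_k$, so the usual relativisation of Chong--Mourad is unavailable, and the two extra hypotheses ($\Sigma^0_k$-finiteness via $f$, and the short $\Pi^0_k$-cut $I$) must be used essentially. I would establish it by an external induction on $k$: given a $\Sigma^0_k$-finite universe one peels off one level of the jump and applies Belanger's jump-inversion theorem (Theorem~\ref{thm:belanger-jump-inversion}) to recover, at the appropriate level, a model of the requisite $\bd\Sigma^0_j$ in which the $\Sigma^0_{k-1}$-data (the $\Delta^0_{k-1}$-approximations to the bounded relation, together with an increasing enumeration of $I$) can be coded by the inductive hypothesis, and then transfers the resulting code back; the short cut $I$ is what guarantees that at each stage collection can be reinstated without having assumed it outright, and this is where the full strength of $\bd\Sigma^0_n + \neg\ind\Sigma^0_n$ (rather than just $\RCA^*_0 + \neg\ind\Sigma^0_k$) is spent. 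Keeping exact track of which jumps genuinely exist as sets at each stage is the delicate part. A more elementary variant, avoiding jump inversion, would interleave the coding directly with the proof of $\mathrm{C}\Sigma^0_k$ — building the stabilising chain $\Ack(s_0)\subseteq\Ack(s_1)\subseteq\cdots$ of finite approximations to $\{\langle x,f(x)\rangle:x\le c\}$ by hand and using $\Sigma^0_k$-finiteness to bound the stage at which it stabilises — but I expect the bookkeeping there to be at least as heavy.
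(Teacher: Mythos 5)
Your reduction in the case $k>n$ --- which is the only interesting case --- replaces the hypothesis $\bd\Sigma^0_n+\neg\ind\Sigma^0_n$ by its consequence $\neg\ind\Sigma^0_k$, so everything rests on your proposed main lemma that $\RCA^*_0+\neg\ind\Sigma^0_k\vdash \mathrm{C}\Sigma^0_k$. That lemma is false for every $k\ge 2$, and countermodels are constructed in this very paper: by Towsner's expansion theorem (Theorem~\ref{thm:towsner-expansion}), any countable nonstandard model of $\RCA_0+\ind\Sigma^0_{k-1}$ can be $\bbomega$-extended, keeping $\RCA_0+\ind\Sigma^0_{k-1}$, so that a bijection between $\N$ and the standard cut $\omega$ becomes $\Delta^0_k$-definable; since $\omega\subseteq[0,a]$ for any nonstandard $a$, this yields a $\Sigma^0_k$-definable total injection of $\N$ into a bounded interval, i.e.\ $\neg\mathrm{C}\Sigma^0_k$. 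This is exactly the model used in the proof of Theorem~\ref{thm:positive-solution}. Such a model automatically satisfies $\neg\ind\Sigma^0_k$, because $\ind\Sigma^0_k$ gives bounded $\Sigma^0_k$-comprehension and hence, via the pigeonhole principle for coded maps, $\mathrm{C}\Sigma^0_k$. So $\RCA^*_0+\neg\ind\Sigma^0_k$ (indeed even $\RCA_0+\ind\Sigma^0_{k-1}+\neg\ind\Sigma^0_k$) does not prove $\mathrm{C}\Sigma^0_k$: the theorem genuinely needs collection \emph{together with} failure of induction at the same level $n<k$, which is precisely the point of the remark after Theorem~\ref{thm:neg-is1-cs2} that no analogue of Theorem~\ref{thm:towsner-expansion} is available over $\bd\Sigma^0_n+\neg\ind\Sigma^0_n$.

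The same countermodel shows that your ``decisive'' coding step cannot be carried out from the hypotheses you allow it: there one has $\RCA^*_0$, a $\Sigma^0_k$-definable injection $f\colon\N\to[0,b]$, and a $\Pi^0_k$-definable proper cut, yet for nonstandard $c$ the bounded $\Delta^0_k$ set $\{\tuple{x,f(x)}:x\le c\}$ is not equal to any $\Ack(s)$, since such an $s$ would contradict the coded pigeonhole principle, provable already in $\ind\Delta_0+\exp$. No appeal to Belanger's jump inversion can repair this: adjoining second-order sets cannot create a first-order code whose existence is outright contradictory. Your sketch in fact concedes that the full strength of $\bd\Sigma^0_n+\neg\ind\Sigma^0_n$ must be ``spent'' in this step, but those hypotheses were discarded when you isolated the main lemma and never reappear; that internal tension is where the argument breaks. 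Two smaller points: under $\bd\Sigma^0_k$ it is not true that every bounded $\Sigma^0_k$-definable set is coded (that is bounded $\Sigma^0_k$-comprehension, equivalent to $\ind\Sigma^0_k$); your $k\le n$ case is nevertheless fine once phrased, as you later do, via the separation form of Theorem~\ref{lem:chong-mourad} applied to the $\Delta^0_k$ graph of $f{\upharpoonright}[0,b+1]$. Finally, note that the paper itself contains no proof of Theorem~\ref{thm:neg-is1-cs2}: it cites \cite{kky:ramsey-rca0star}, and the known proofs are automorphism-based (e.g.\ via Kaye's result that models of $\bd\Sigma_1+\exp+\neg\ind\Sigma_1$ are elementarily equivalent to $\aleph_\omega$-like structures), so any correct argument must engage the level-$n$ hypotheses in an essential way.
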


A proof of Theorem \ref{thm:neg-is1-cs2} is given in \cite{kky:ramsey-rca0star}. As mentioned in \cite{kky:ramsey-rca0star},
a different proof from the one described there can be obtained by relativizing Kaye's proof of the result  that any model of $\bd\Sigma_1 + {\exp} + {\neg\ind\Sigma_1}$ is elementarily equivalent to an $\aleph_\omega$-like structure \cite[Theorem 2.4]{kaye:constructing-kappa-like}.

Note that Theorem \ref{thm:neg-is1-cs2} means that an analogue of Theorem \ref{thm:towsner-expansion}
\leszek{for $\bd \Sigma^0_n$} fails. For example, if $(M,\X)$ is a countable model of $\RCA^*_0 + \bd \Sigma^0_n + \neg \ind\Sigma^0_n$
then the graph of any bijection between $M$ and the standard cut $\omega$ cannot be arithmetically
definable in any $(M, \Y) \models \bd \Sigma^0_n$ with $\X \subseteq \Y$.

In the proof of Theorem \ref{thm:positive-solution},
we use a combination of Theorem \ref{thm:towsner-expansion} and Theorem \ref{thm:neg-is1-cs2}.

\begin{proof}[Proof of Theorem \ref{thm:positive-solution}]
Fix $n \ge 1$.
The set of those $\Pi^1_2$ sentences $\psi$ that are $\Pi^1_1$-conservative over $\RCA^*_0 + \bd\Sigma^0_n$
is clearly $\Pi_2$, so we only need to prove completeness.
Given a $\Pi_2$ sentence $\varphi \defeq \forall x\, \exists y\, \delta(x,y)$, define the $\Sigma^1_1$ sentence $\psi_\varphi$
as
\[\neg \ind \Sigma^0_n \lor \exists Z\,\exists a\,\left[(\textrm{there exists a } \Sigma_{n+1}(Z) \textrm{ function } \extset f \colon \N \hookrightarrow a) \land \ab{x}{a}\exists y\,\delta(x,y)\right] .\]
We claim that $\psi_\varphi$ is $\Pi^1_1$-conservative over $\RCA^*_0 + \bd\Sigma^0_n$ if and only if
$\varphi$ is true in the standard model of arithmetic $\omega$.

Assume that $\varphi$ is true in $\omega$, and let $(M, \X)$ be a countable nonstandard
model of $\RCA^*_0 + \bd \Sigma^0_n$.
If $(M,\X) \models \neg \ind \Sigma^0_n$, then also $(M,\X) \models \psi_\varphi$. On the other hand,
if $(M,\X) \models \ind \Sigma^0_n$, by Theorem \ref{thm:towsner-expansion} we can $\bbomega$-extend $(M,\X)$ to $(M,\Y)$ in which there is a $\Delta^0_{n+1}$-definable (and thus $\Sigma^0_{n+1}$-definable) bijection $\extset f$ between $M$ and $\omega$.
If $M \models \varphi$, let $a$ be any nonstandard element of $M$. If $M \models \neg \varphi$, then, using
$\ind\Sigma_1$ in $M$, let $a$ be the largest element of $M$ such that $M \models \ab{x}{a}\exists y\,\delta(x,y)$;
in this case, $a$ is necessarily nonstandard. In each case, we see that $\extset f$ is an injection
from $M$ into $a_M$, so $(M,\Y) \models \psi_\varphi$.

We have shown that if $\omega \models \varphi$, then every countable nonstandard model
of $\RCA^*_0 + \bd \Sigma^0_n$ $\bbomega$-extends to a model of $\RCA^*_0 + \bd \Sigma^0_n + \psi_\varphi$.
This is enough to show that $\psi_\varphi$ is $\Pi^1_1$-conservative over $\varphi$.

Now assume that $\varphi$ is false in $\omega$, and let $m \in \omega$ be such that
$\omega \models \neg \exists y\, \delta(m,y)$. We argue that the $\Sigma^1_1$ formula
$\neg \exists y\, \delta(m,y) \land \neg \mathrm{C}\Sigma^0_{n+1}$ is consistent
with $\RCA^*_0 + \bd \Sigma^0_n$ (in fact, with $\RCA^*_0 + \ind \Sigma^0_n$)
but not with $\RCA^*_0 + \bd \Sigma^0_n + \psi_\varphi$.

Let $M$ be a countable nonstandard model of $\mathrm{Th}(\omega)$.
Then $(M,\Delta_{1}\textrm{-Def}(M)) \models \RCA^*_0 + \ind \Sigma^0_n + \neg \exists y\, \delta(m,y)$.
Use Theorem \ref{thm:towsner-expansion} to $\bbomega$-extend $(M,\Delta_{1}\textrm{-Def}(M))$
to $(M,\Y) \models \RCA^*_0 + \ind \Sigma^0_n + \neg \mathrm{C}\Sigma^0_{n+1}$.
Of course, $\neg \exists y\, \delta(m,y)$ still holds in $(M,\Y)$,
because it is a purely arithmetical statement.
This shows the consistency of $\neg \exists y\, \delta(m,y) \land \neg \mathrm{C}\Sigma^0_{n+1}$
with $\RCA^*_0 + \bd \Sigma^0_n$.

On the other hand, assume that a structure $(M,\X)$
satisfies both $\RCA^*_0 + \bd \Sigma^0_n + \psi_\varphi$
and $\neg \exists y\, \delta(m,y) \land \neg \mathrm{C}\Sigma^0_{n+1}$.
By Theorem \ref{thm:neg-is1-cs2}, $(M,\X)$ must satisfy $\ind\Sigma^0_n$,
so it must also satisfy the second disjunct of $\psi_\varphi$.
In particular, for any element $a$ witnessing the existential number quantifier
in that disjunct, we have $M \models \ab{x}{a}\exists y\,\delta(x,y)$.
On the other hand, any such $a$ also has to be nonstandard, which
gives a contradiction with $M \models \neg \exists y\, \delta(m,y)$.
\end{proof}

\section*{Acknowledgements}
Fiori Carones and Kołodziejczyk were partially supported by grant number 2017/27/B/ST1/01951 of the
National Science Centre, Poland.
Part of this research was conducted when Wong was financially supported by the Singapore Ministry of Education Academic Research Fund Tier 2 grant MOE2016-T2-1-019 / R146-000-234-112.
Yokoyama was partially supported by JSPS KAKENHI grant number 19K03601 and 21KK0045.

\bibliographystyle{plain}
\bibliography{leszek2014,lawrence-keita-add}

\begin{thebibliography}{10}

\bibitem{avigad:formalizing-forcing}
Jeremy Avigad.
\newblock Formalizing forcing arguments in subsystems of second-order
  arithmetic.
\newblock {\em Ann. Pure Appl. Logic}, 82:165--191, 1996.

\bibitem{avigad:logic-book}
Jeremy Avigad.
\newblock {\em {M}athematical {L}ogic and {C}omputation}.
\newblock Cambridge University Press, to appear.

\bibitem{belanger:coh}
David~R. Belanger.
\newblock Conservation theorems for the cohesiveness principle.
\newblock Preprint, 2015.

\bibitem{cholak-jockusch-slaman}
Peter~A. Cholak, Carl~G. Jockusch, and Theodore~A. Slaman.
\newblock On the strength of {R}amsey's theorem for pairs.
\newblock {\em J. Symb. Log.}, 66(1):1--55, 2001.

\bibitem{chong-mourad:degree-cut}
C.~T. Chong and K.~J. Mourad.
\newblock The degree of a {$\Sigma_n$} cut.
\newblock {\em Ann. Pure Appl. Logic}, 48(3):227--235, 1990.

\bibitem{csy:conservation-weaker}
C.~T. Chong, Theodore~A. Slaman, and Yue Yang.
\newblock {$\Pi^1_1$}-conservation of combinatorial principles weaker than
  {R}amsey's {T}heorem for pairs.
\newblock {\em Adv. Math.}, 230:1060--1077, 2012.

\bibitem{art:cy}
C.~T. Chong and Yue Yang.
\newblock Recursion theory on weak fragments of {P}eano arithmetic: a study of
  definable cuts.
\newblock In {\em Proceedings of the {S}ixth {A}sian {L}ogic {C}onference
  ({B}eijing, 1996)}, pages 47--65. World Sci. Publ., River Edge, NJ, 1998.

\bibitem{chong-yang:jump-cut}
C.~T. Chong and Yue Yang.
\newblock The jump of a {$\Sigma_n$}-cut.
\newblock {\em J.~London Math. Soc.~(2)}, 75(3):690--704, 2007.

\bibitem{ev:new-satisfaction}
Ali Enayat and Albert Visser.
\newblock New constructions of satisfaction classes.
\newblock In T.~Achourioti et~al., editor, {\em Unifying the philosophy of
  truth}, volume~36 of {\em Log. Epistemol. Unity Sci.}, pages 321--335.
  Springer, Dordrecht, 2015.

\bibitem{enayat-wong}
Ali Enayat and Tin~Lok Wong.
\newblock Unifying the model theory of first-order and second-order arithmetic
  via {${\rm WKL}_0^*$}.
\newblock {\em Ann. Pure Appl. Logic}, 168(6):1247--1283, 2017.

\bibitem{fernandes:baire-feasible}
Ant\'{o}nio~M. Fernandes.
\newblock The {B}aire category theorem over a feasible base theory.
\newblock In Stephen~G. Simpson, editor, {\em Reverse mathematics 2001},
  volume~21 of {\em Lect. Notes Log.}, pages 164--174. Association for Symbolic
  Logic, 2005.

\bibitem{fff:analysis-weak}
Ant\'{o}nio~M. Fernandes, Fernando Ferreira, and Gilda Ferreira.
\newblock Analysis in weak systems.
\newblock In {\em Logic and computation}, volume~33 of {\em Tributes}, pages
  231--261. College Publications, 2017.

\bibitem{ferreira:feasible-analysis}
Fernando Ferreira.
\newblock A feasible theory for analysis.
\newblock {\em J.~Symb.~Log.}, 59(3):1001--1011, 1994.

\bibitem{fkk:weak-cousins}
Marta Fiori-Carones, Leszek~Aleksander Ko{\l}odziejczyk, and Katarzyna~W.
  Kowalik.
\newblock Weaker cousins of {R}amsey's theorem over a weak base theory.
\newblock {\em Ann. Pure Appl. Logic}, 172(10):Paper No. 103028, 22 pages,
  2021.

\bibitem{hajek:interpretability}
Petr H{\'a}jek.
\newblock Interpretability and fragments of arithmetic.
\newblock In P.~Clote and J.~Kraj\'{\i}\v{c}ek, editors, {\em Arithmetic, Proof
  Theory, and Computational Complexity}, pages 185--196. Oxford University
  Press, 1993.

\bibitem{hajek-kucera:recursion-is1}
Petr H\'{a}jek and Anton\'{\i}n Ku\v{c}era.
\newblock On recursion theory in {${\rm I}\Sigma_1$}.
\newblock {\em J.~Symb.~Log.}, 54(2):576--589, 1989.

\bibitem{hajek-pudlak}
Petr H\'ajek and Pavel Pudl\'ak.
\newblock {\em Metamathematics of {F}irst-{O}rder {A}rithmetic}.
\newblock Springer-Verlag, 1993.

\bibitem{hirschfeldt:slicing}
Denis~R. Hirschfeldt.
\newblock {\em Slicing the truth. On the computable and reverse mathematics of
  combinatorial principles}.
\newblock World Scientific, 2015.

\bibitem{hj:reduction-notions}
Denis~R. Hirschfeldt and Carl~G. Jockusch, Jr.
\newblock On notions of computability-theoretic reduction between {$\Pi_2^1$}
  principles.
\newblock {\em J. Math. Log.}, 16(1):1650002, 59 pages, 2016.

\bibitem{HSS-AMT}
Denis~R. Hirschfeldt, Richard~A. Shore, and Theodore~A. Slaman.
\newblock The atomic model theorem and type omitting.
\newblock {\em Trans. Amer. Math. Soc.}, 361(11):5805--5837, 2009.

\bibitem{jockusch-soare:low-basis}
Carl~G. Jockusch, Jr. and Robert~I. Soare.
\newblock {$\Pi ^{0}_{1}$} classes and degrees of theories.
\newblock {\em Trans. Amer. Math. Soc.}, 173:33--56, 1972.

\bibitem{kaye:properties}
Richard Kaye.
\newblock Model-theoretic properties characterizing {P}eano arithmetic.
\newblock {\em J. Symb. Log.}, 56(3):949--963, 1991.

\bibitem{Kaye91}
Richard Kaye.
\newblock {\em Models of {P}eano {A}rithmetic}.
\newblock Oxford University Press, 1991.

\bibitem{kaye:theory-kappa-like}
Richard Kaye.
\newblock The theory of {$\kappa$}-like models of arithmetic.
\newblock {\em Notre Dame J. Formal Logic}, 36(4):547--559, 1995.

\bibitem{kaye:constructing-kappa-like}
Richard Kaye.
\newblock Constructing {$\kappa$}-like models of arithmetic.
\newblock {\em J.~London Math. Soc.~(2)}, 55(1):1--10, 1997.

\bibitem{kky:ramsey-rca0star}
Leszek~Aleksander Ko{\l}odziejczyk, Katarzyna~W. Kowalik, and Keita Yokoyama.
\newblock How strong is {R}amsey's theorem if infinity can be weak?
\newblock {\em J.~Symb.~Log.}, 2022.
\newblock To appear. DOI:10.1017/jsl.2022.46.

\bibitem{kwy:ramsey-proof-size}
Leszek~Aleksander Ko{\l}odziejczyk, Tin~Lok Wong, and Keita Yokoyama.
\newblock Ramsey's theorem for pairs, collection, and proof size, 2020.
\newblock Submitted, available at \url{arXiv:2005.06854}.

\bibitem{ky:categorical}
Leszek~Aleksander Ko{\l}odziejczyk and Keita Yokoyama.
\newblock Categorical characterizations of the natural numbers require
  primitive recursion.
\newblock {\em Ann. Pure Appl. Logic}, 166(2):219--231, 2015.

\bibitem{kossak:omega}
Roman Kossak.
\newblock Models with the {$\omega$}-property.
\newblock {\em J.~Symb.~Log.}, 54(1):177--189, 1989.

\bibitem{kossak:extensions}
Roman Kossak.
\newblock On extensions of models of strong fragments of arithmetic.
\newblock {\em Proc. Amer. Math. Soc.}, 108(1):223--232, 1990.

\bibitem{kossak:extensions-correction}
Roman Kossak.
\newblock A correction to: ``{O}n extensions of models of strong fragments of
  arithmetic'' [{P}roc. {A}mer. {M}ath. {S}oc. {108} (1990), no. 1, 223--232;
  {MR}0984802 (90d:03123)].
\newblock {\em Proc. Amer. Math. Soc.}, 112(3):913--914, 1991.

\bibitem{kw:disjunctions-stopping}
Roman Kossak and Bartosz Wcis{\l}o.
\newblock Disjunctions with stopping conditions.
\newblock {\em Bull. Symb. Log.}, 27(3):231--253, 2021.

\bibitem{kotlarski-krajewski-lachlan}
H.~Kotlarski, S.~Krajewski, and A.~Lachlan.
\newblock Construction of satisfaction classes for nonstandard models.
\newblock {\em Canad. Math. Bull.}, 24:283--293, 1981.

\bibitem{kotlarski:elem-cuts-rec-sat}
Henryk Kotlarski.
\newblock On elementary cuts in recursively saturated models of {P}eano
  arithmetic.
\newblock {\em Fund. Math.}, 120(3):205--222, 1984.

\bibitem{lachlan:satisfaction-rec-sat}
A.~H. Lachlan.
\newblock Full satisfaction classes and recursive saturation.
\newblock {\em Canad. Math. Bull.}, 24(3):295--297, 1981.

\bibitem{leigh:conservativity-truth}
Graham~E. Leigh.
\newblock Conservativity for theories of compositional truth via cut
  elimination.
\newblock {\em J. Symb. Log.}, 80(3):845--865, 2015.

\bibitem{simpson:sosoa}
Stephen~G. Simpson.
\newblock {\em Subsystems of {S}econd {O}rder {A}rithmetic}.
\newblock Association for Symbolic Logic, 2009.

\bibitem{Simpson-BCT-in-RCAs}
Stephen~G. Simpson.
\newblock Baire categoricity and {$\Sigma^0_1$}-induction.
\newblock {\em Notre Dame J. Form. Log.}, 55(1):75--78, 2014.

\bibitem{simpson-smith}
Stephen~G. Simpson and Rick~L. Smith.
\newblock Factorization of polynomials and {$\Sigma^0_1$} induction.
\newblock {\em Ann. Pure Appl. Logic}, 31(2-3):289--306, 1986.

\bibitem{smorynski}
C.~Smory\'{n}ski.
\newblock Back-and-forth inside a recursively saturated model of arithmetic.
\newblock In {\em Logic {C}olloquium '80 ({P}rague, 1980)}, volume 108 of {\em
  Stud. Logic Foundations Math.}, pages 273--278. North-Holland, Amsterdam-New
  York, 1982.

\bibitem{tent-ziegler}
Katrin Tent and Martin Ziegler.
\newblock {\em A course in model theory}, volume~40 of {\em Lecture Notes in
  Logic}.
\newblock Cambridge University Press, 2012.

\bibitem{towsner:maximum-conservative}
Henry Towsner.
\newblock On maximum conservative extensions.
\newblock {\em Computability}, 4(1):57--68, 2015.

\bibitem{wong:interpreting-wkl-act}
Tin~Lok Wong.
\newblock Interpreting {W}eak {K}{\"o}nig's {L}emma using the {A}rithmetized
  {C}ompleteness {T}heorem.
\newblock {\em Proc. Amer. Math. Soc.}, 144:4021--4024, 2016.

\bibitem{yokoyama:conservativity}
Keita Yokoyama.
\newblock On {$\Pi^1_1$} conservativity of {$\Pi^1_2$} theories in second order
  arithmetic.
\newblock In C.~T.~Chong {et al.}, editor, {\em Proceedings of the 10th Asian
  Logic Conference}, pages 375--386. World Scientific, 2009.

\end{thebibliography}

\end{document}